\theoremstyle{Proposition}
\newtheorem{theorem}{Theorem}[section]
\newtheorem{corollary}[theorem]{Corollary}
\newtheorem{lemma}[theorem]{Lemma}
\newtheorem{proposition}[theorem]{Proposition}
\theoremstyle{definition}
\newtheorem{definition}[theorem]{Definition}
\theoremstyle{remark}
\newtheorem{remark}[theorem]{Remark}
\newtheorem{example}[theorem]{Example}
\newcommand{\norm}[1]{\Vert#1\Vert}
\newcommand{\abs}[1]{\vert#1\vert}
\DeclareMathOperator{\grad}{\nabla}
\DeclareMathOperator{\argmin}{{argmin}}
\DeclareMathOperator{\dist}{{dist}}
\newcommand{\bq}{\begin{equation}}
\newcommand{\eq}{\end{equation}}
\newcommand{\R}{\mathbb{R}}
\newcommand{\Rn}{\R^n}
\newcommand{\e}{\epsilon}
\newcommand{\bO}{\mathcal{O}}
\newcommand{\LQC}{\lambda_{QC}}
\newcommand{\Dir}{\mathcal{D}}
\begin{document}

\title[quasiconvex envelope]
{A partial differential equation for the strictly quasiconvex envelope}

\author{Bilal Abbasi \and Adam M. Oberman}
\address{Adam M. Oberman
\hfill\break\indent
Department of Mathematics and Statistics
\hfill\break\indent
McGill University 
\hfill\break\indent
{\tt adam.oberman@mcgill.ca}}

\date{\today}

\begin{abstract}
In a series of papers Barron, Goebel, and Jensen studied Partial Differential Equations (PDE)s for quasiconvex (QC) functions \cite{barron2012functions, barron2012quasiconvex,barron2013quasiconvex,barron2013uniqueness}.  To overcome the lack of uniqueness for the QC PDE, they introduced a regularization: a PDE for $\e$-robust QC functions, which is well-posed.  Building on this work, we introduce a stronger regularization which is amenable to numerical approximation. We build convergent finite difference approximations, comparing the QC envelope and the two regularization.  Solutions of this PDE are strictly convex, and smoother than the robust-QC functions.
 \end{abstract}
\maketitle

\tableofcontents

\section{Introduction}

In a series of papers from about four years ago, Barron, Goebel, and Jensen introduced and studied partial differential equations (PDEs) for quasiconvexity \cite{barron2012functions, barron2012quasiconvex,barron2013quasiconvex,barron2013uniqueness}.  In this context, quasiconvexity means that the sublevel sets of a function are convex.  The study of convexity of level sets for obstacle problems has a long history, which includes \cite{caffarelli1982convexity} and \cite{kawohl1985rearrangements}, see also the more recent work \cite{colesanti2003quasi} and the references therein.   Quasiconvex (QC) functions appear naturally in optimization, since they generalize convex functions, yet still have a unique minimizer.  The property also appears in economics~\cite{avriel1988generalized}.  Earlier work by one of the authors studied a PDE for the convex envelope \cite{ObermanConvexEnvelope} which led to numerical method for convex envelopes \cite{ObermanEigenvalues, ObermanCENumerics}. 

Quasiconvexity is challenging because, unlike convexity, it is a nonlocal property (at least for functions which have flat parts).  This means that, even using viscosity solutions, there is no local characterization for quasiconvexity.   On the other hand, by using the more stable notion of robust quasiconvexity, Barron, Goebel and Jensen showed that these functions are characterized in the viscosity sense \cite{crandall1992user} by a partial differential equation \cite{barron2013quasiconvex, barron2013uniqueness}.

One motivation for this work was to build numerical solvers for the QC envelope PDE.   However, we had difficulties with both the QC and the robust-QC operators: the former lacks uniqueness, and the latter uses an operator defined over small slices of angles, (see the illustration Figure~\ref{fig:differentQC} below),  which leads to poor accuracy when using wide stencil finite difference schemes.   An alternative, presented in \cite{barron2013quasiconvex} was to use first order nonlocal PDEs solvers.   In a companion paper \cite{AbbasiLineSolverQuasiConvex}, we built a non-local  solver for the QC and for the robust-QC envelope \cite{AbbasiLineSolverQuasiConvex}.  This problem can be solved explicitly, and implemented efficiently.  By iteratively solving for the envelopes on lines in multiple directions, we approximated the solution of the problem in higher dimensions.   However, we are still interested in the PDE approach, which has advantages which come from a local expression for the operator. 


In this article, we build on the results of \cite{barron2012functions, barron2012quasiconvex, barron2013uniqueness} to obtain a PDE for strictly  quasiconvex (QC) functions. Strict quasiconvexity implies robust quasiconvexity.  Following the argument in \cite{barron2013uniqueness}, we establish uniqueness of viscosity solutions for the PDE.   Moreover, this operator is defined for all direction vectors, which makes it amenable to discretization using wide stencil schemes. 

 We consider the obstacle problem for the $\e$-strictly-convex envelope.   As is the case for robust-QC, we recover the QC-envelope as the regularization parameter $\e \to 0$.   While robust-QC functions can have corners in one dimension, strictly QC functions are smoother, see see Figure \ref{fig:differentQC} below, and the explicit formula for the solution in one dimension in \S\ref{sec:1dsoln}.

 We also build and implement convergent elliptic finite difference schemes \cite{BS91, ObermanSINUM} for the envelopes.  These are wide-stencil finite difference schemes, which can be developed using ideas similar to \cite{ObermanEigenvalues, ObermanMC,ObermanCENumerics}. Solutions to these PDEs can be found using an iterative method which is equivalent to the explicit Euler discretization of the parabolic equation~\cite{ObermanSINUM}. However the method has a nonlinear CFL condition which restricts the step size.  We find that alternating the line solver with several iterations of the parabolic PDE solver  significantly improves the speed of the solution.  Numerical solutions show that we obtain very similar results to the line solver for the QCE with small $\e = h^2/2$, where $h$ is the grid resolution.  We also compare large $\e$ solutions with comparable robust QCE, and find that solutions are smoother.  Formally, we show that solutions are $\e$ uniformly convex. 

The QC operator in two dimensions recovers the level set curvature operator.   We show that our discretization of our operator agrees with the Kohn-Serfaty \cite{kohn2007second} first order representation of the mean curvature operator in two dimensions.  See \S~\ref{sec:first_order}.


\subsection{Convexity of level sets of a function}
We give a brief informal derivation of the operator.  Given a smooth function $u: \Rn \to \R$, the direction of the gradient at $x_0$, $p = \grad u(x_0)$, is the normal to the sublevel set $\{ x \in \Rn  \mid u(x) \leq u(x_0) \}$.   The curvatures of the level set at $x_0$ are proportional to the eigenvalues of the Hessian of $u$, $M = D^2u(x_0)$, projected onto the tangent hyperplane at $x_0$, $P = P_{x_0} = \{ v \in \Rn \mid v\cdot p = 0\}$.   These curvatures are all positive if $v^\intercal M v \geq 0$ for all $v \in P$.  Thus, formally, the condition of local convexity of the level set is nonnegativity of the operator
\bq\label{QCoperator}
L_0(p,M) \equiv \min_{\abs{v} = 1} \left \{ v^\intercal M v \mid v\cdot p = 0   \right \}
\eq
This is the operator considered in \cite{barron2013quasiconvex} to study quasiconvex functions. 
%
However, for technical reasons discussed below, they chose to relax the constraint $v \cdot p =0 $ to an inequality constraint $\abs{v \cdot p} \leq \e$, resulting in the operator
\[
L_\e(p,M) \equiv \min_{\abs{v} = 1} \left \{ v^\intercal M v \mid  \abs{v \cdot  p}\le\e   \right \},
\]
Our operator is obtained  by instead replacing the hard constraint $\abs{v\cdot p} \leq \e$ with a penalty in the objective function. 
So we define:
\[
F^\e(p,M) \equiv \min_{\abs{v} = 1}\left \{  v^\intercal M v + \frac{1}{{\e}} \abs{v^\intercal p} \right \},
\]

This choice of penalty gives the operator for uniformly convex level sets, as we show below. 

\autoref{fig:differentQC} illustrates the important differences in the notions of quasiconvexity discussed so far. Notice that the $\e$-robustly quasiconvex envelope can have an interval of global minimums, whereas our solution has a unique (global) minimum.

\begin{figure}[t]
    \centering
    \begin{subfigure}[b]{.5\textwidth}
    \centering
        \includegraphics[width=.95\textwidth]{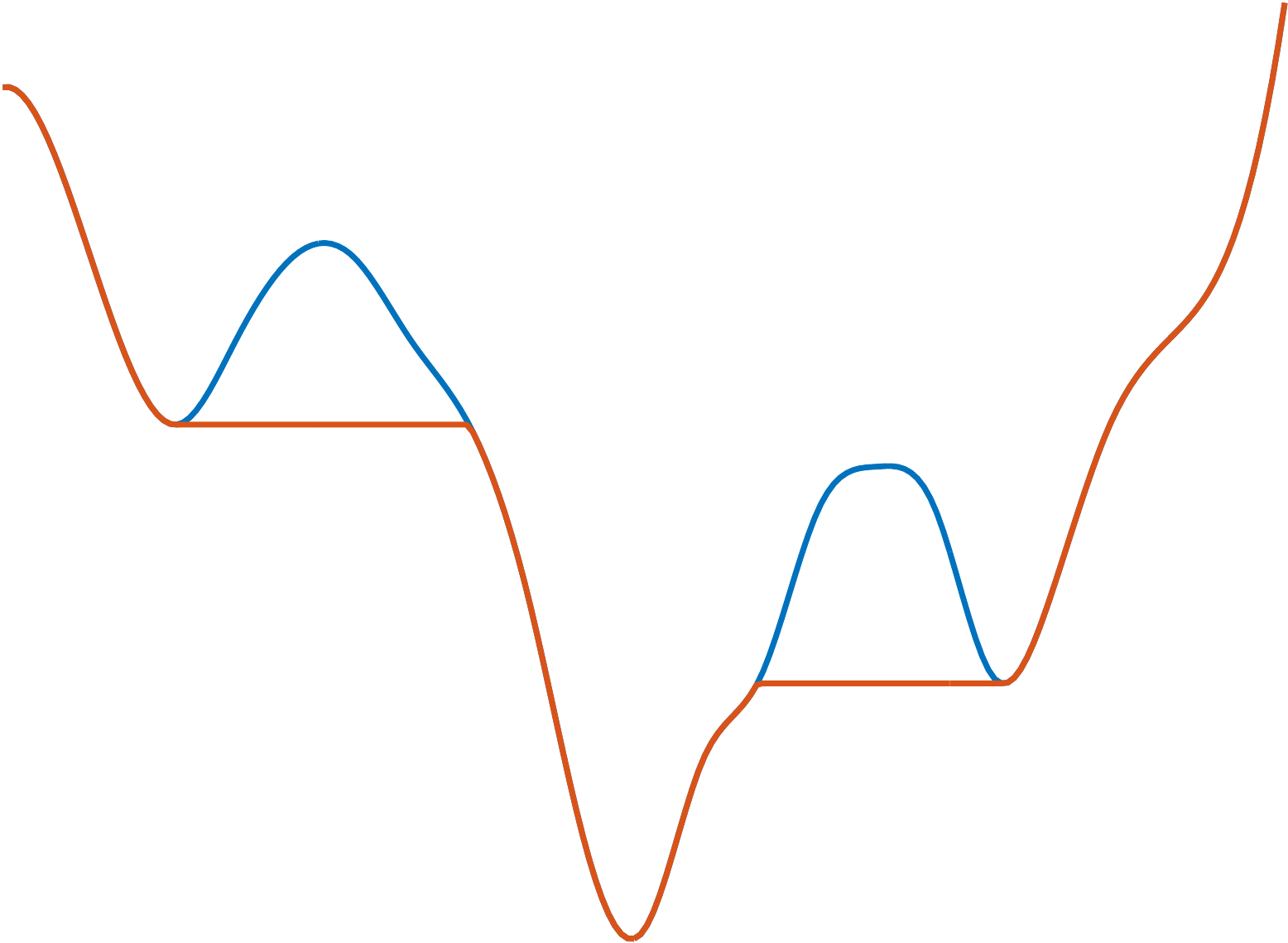}
        \caption*{quasiconvex envelope}
    \end{subfigure}
    ~
    \begin{subfigure}[b]{.5\textwidth}
    \centering
        \includegraphics[width=.95\textwidth]{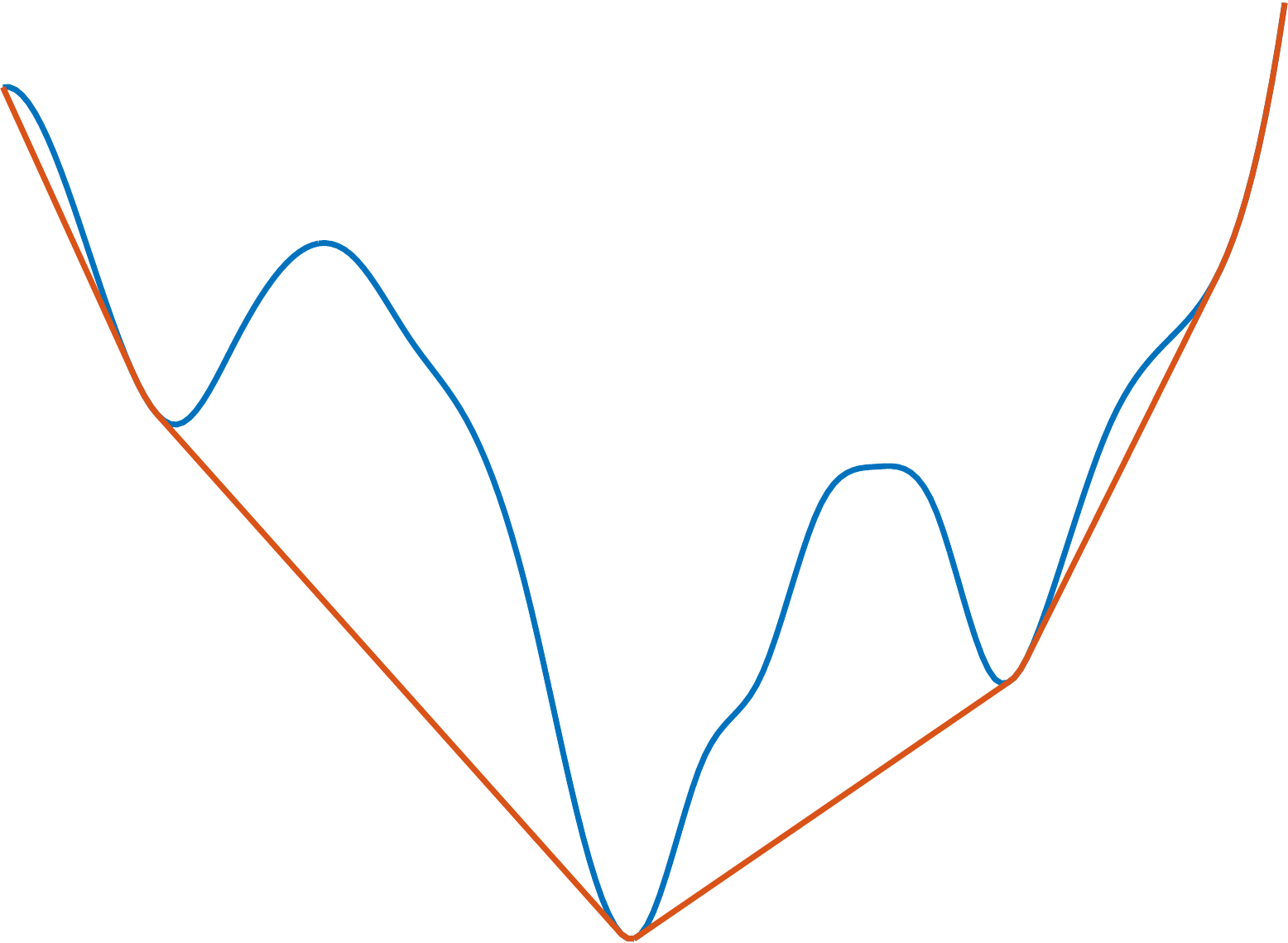}
        \caption*{convex envelope}
    \end{subfigure}
    \caption{Comparison of a function (blue) and its envelopes (red).}
    \label{fig:CEandQC1d}
\end{figure}

\begin{figure}[t]
~
\begin{minipage}{0.32\textwidth}
\centering
\begin{tikzpicture}
\draw (1.414,1.414) arc [very thick,radius=2, start angle=45, end angle= 135];
\filldraw (0,2) circle(1pt);
\draw [red,thick,->] (0,2) -- (0,2.95);
\draw [blue,thick,->] (0,2) -- (.75,2);
\draw [blue,thick,->] (0,2) -- (-.75,2);
\draw[white](-1,0.99)--(1,0.99);
\end{tikzpicture}
\caption*{$\nabla u(x)^\intercal v = 0$}
\end{minipage}
~
\begin{minipage}{0.32\textwidth}
\centering
\begin{tikzpicture}
\draw (1.414,1.414) arc [ultra thick,radius=2, start angle=45, end angle= 135];
\draw [red,->,thick] (0,2) -- (0,2.95);
  \fill[fill=blue!19!white,opacity=0.5]
    (0,2) -- (.6495,1.625) arc (-30:30:.75);
  \fill[fill=blue!19!white,opacity=0.5]
    (0,2) -- (-.6495,2.375) arc (150:210:.75);
\draw[blue] (0,2)--(0.6495,1.625);
\draw[blue] (0,2)--(0.6495,2.375);
\draw[thick,blue] (0.6495,1.625) arc[radius=.75,start angle=-30, end angle=30];
\draw[blue] (0,2)--(-0.6495,1.625);
\draw[blue] (0,2)--(-0.6495,2.375);
\draw[thick,blue] (-0.6495,2.375) arc[radius=.75,start angle=150, end angle=210];
\draw (0,2) circle(1pt);
\draw[white](-1,0.99)--(1,0.99);
\end{tikzpicture}
\caption*{$\abs{\nabla u(x)^\intercal v} \le \e$}
\end{minipage}
~
\begin{minipage}{0.32\textwidth}
\centering
\begin{tikzpicture}
\draw (1.414,1.414) arc [ultra thick,radius=2, start angle=45, end angle= 135];
\draw[color=blue!, fill=blue!19, opacity=0.5](0,2) circle (.75);
\draw[thick, color=blue] (0,2)circle(.75);
\draw [red,->,thick] (0,2) -- (0,2.95);
\draw (0,2) circle(1pt);
\draw[white](-1,0.99)--(1,0.99);
\end{tikzpicture}
\caption*{$\abs{v}=1$}
\end{minipage}
\caption{Constraint sets of different PDEs in two dimensions. Red represents the gradient, blue (shaded) represents the feasible vectors, and black represents the level set.}
\label{constraintset}
\end{figure}

\begin{figure}[t]
    \centering
    \begin{subfigure}[b]{.5\textwidth}
    \centering
        \includegraphics[width=.95\textwidth]{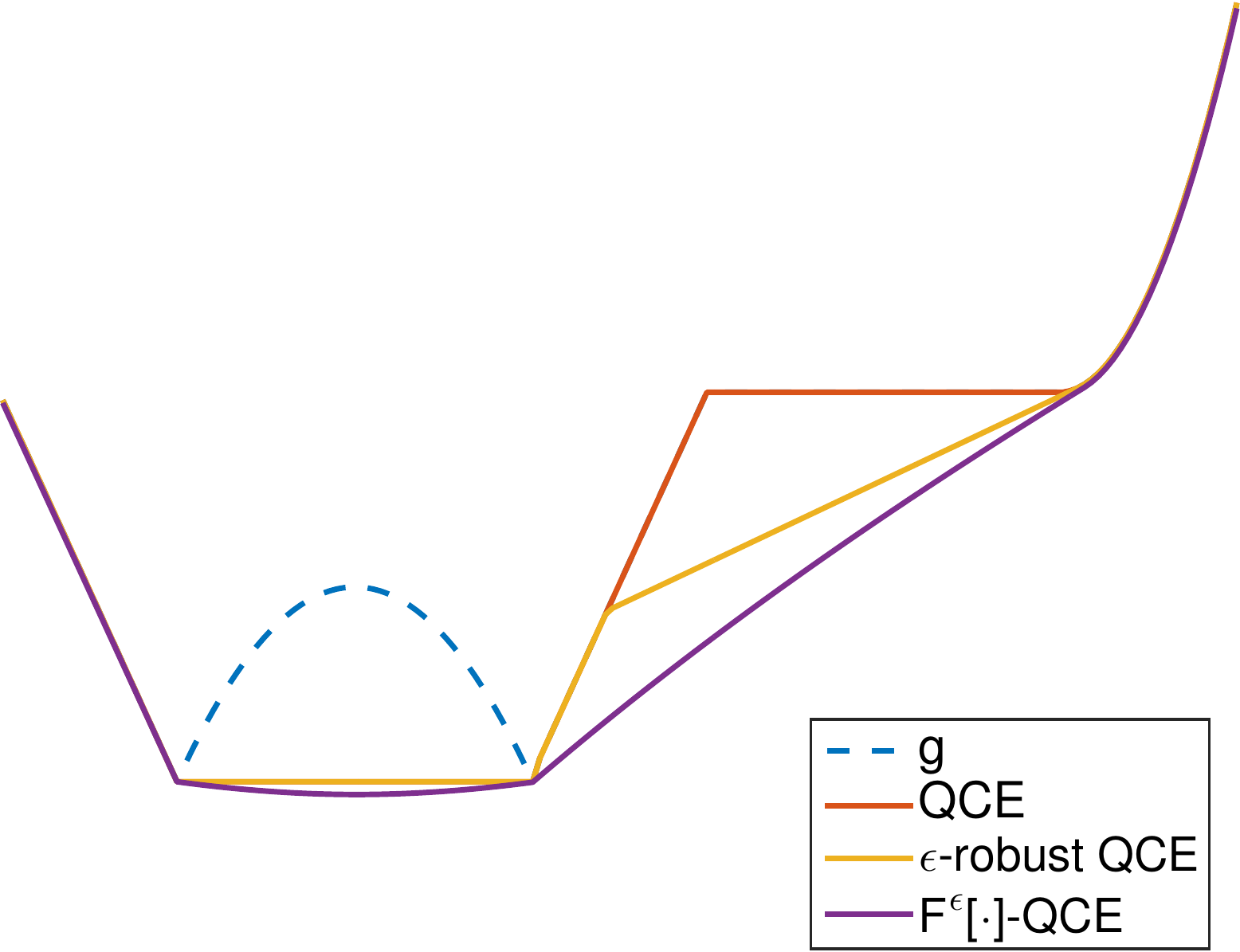}
        \caption{Comparison of the envelopes derived from different notions of quasiconvexity.}
    \end{subfigure}
    ~
    \begin{subfigure}[b]{.5\textwidth}
    \centering
        \includegraphics[width=.95\textwidth]{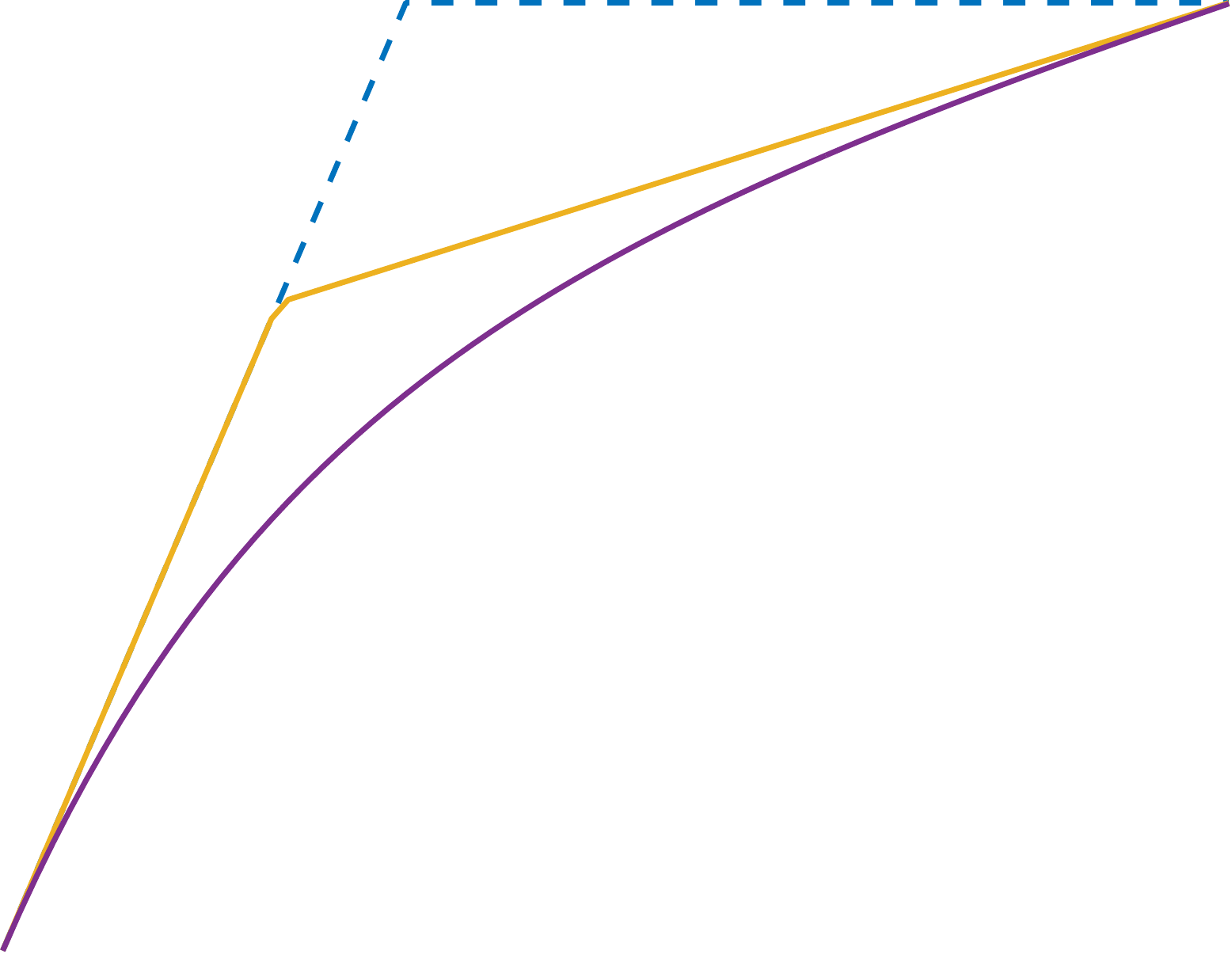}
        \caption{Closer look at the difference between $\e$-robust quasiconvexity and strict quasiconvexity.}
    \end{subfigure}
    \caption{Comparison between solutions of different operators.}
    \label{fig:differentQC}
\end{figure}

\subsection{Basic definitions}
We recall some basic definitions and establish our notation.  For a reference, see~\cite{BoydBook}.
The set $S$ is \emph{convex} if whenever $x$ and $y$ are in $S$ then so is the line segment $[x,y]\equiv\{tx + (1-t)y$: $t \in [0,1]\}$. 
We say that the function $u: \Rn \to \R$ is convex if 
\begin{equation}
\label{convexity_u}
u(tx + (1-t)y) \le t u(x) + (1-t) u(y), 
\quad \text{ for every } x,y \in \Rn,  0 \le t \le 1. 
\end{equation}
The \emph{convex envelope} of a function $g$, hereby denoted $g^{CE}$, is the largest convex function majorized by $g$:
\bq\label{CE}
g^{CE}(x) \equiv \sup \{ v(x)\mid\text{$v$ is convex and $v\le g$}  \}. 
\eq
In terms of sets, if $S$ is given by the sublevel set of a function $u$:
\bq\label{levelset}
S = S_\alpha(u)\equiv\{x \in \Rn \mid u(x)\leq\alpha\}
\eq
then convexity of $S$ is equivalent to the following condition 
\[
u(x), u(y) \le \alpha \implies u(tx + (1-t)y) \le \alpha,
\quad \text{ for every } 0 \le t \le 1. 
\]
This condition, when applied to every level set $\alpha\in\R$, characterizes quasiconvexity of the function $u$. That is, $u$ is \emph{quasiconvex} if every sublevel set $S_\alpha(u)$ is convex.  Equivalently, $u$ is quasiconvex if 
\bq
\label{QCcond0}
u(tx + (1-t)y) \le \max \{ u(x), u(y) \}, 
\quad \text{ for every } x,y \in \Rn,  0 \le t \le 1. 	
\eq

Given $g:\Rn \to \R$, the \emph{quasiconvex envelope} of $g$ is given similarly by
\begin{equation}
	\label{QCE}
g^{QCE}(x) \equiv \sup \{ v(x)\mid\text{$v$ is quasiconvex and $v\le g$}  \} 
\end{equation}
\begin{remark}
Since the maximum of any two quasiconvex (convex) functions is quasiconvex (convex) as well, the supremums in \eqref{CE} and \eqref{QCE} are well-defined. 	
\end{remark}

Figure (\ref{fig:CEandQC1d}) provides a visual comparison between the convex envelope and the quasiconvex envelope in one dimension.

%

\subsection{Viscosity solutions} \label{sec:viscositysolns}
Suppose that $\Omega\subset\Rn$ is a domain. Let $S^n$ be the set of real symmetric $n\times n$ matrices, and take $N\le M$ to denote the usual partial ordering on $S^n$; namely that $N-M$ is negative semi-definite.

\begin{definition}\label{def:elliptic}
The operator $F(x,r,p,M):\Omega\times\R\times\Rn\times S^{n} \to\R$ is \emph{degenerate elliptic} if
\[
F(x,r,p,M)\le F(x,s,p,N)\quad\text{whenever}\ r\le s\ \text{and}\ N\le M
\]
\end{definition}

\begin{remark}
For brevity we use the notation $F[u](x)\equiv F(x,u(x),\nabla u(x), D^2u(x))$. 
\end{remark}

\begin{definition}[Viscosity solutions]
\label{def:viscosity}
We say the upper semi-continuous (lower semi-continuous) function $u:\Omega\to\R$ is a \emph{viscosity subsolution (supersolution)} of $F[u]=0$ in $\Omega$ if for every $\phi\in C^2(\Omega)$, whenever $u-\phi$ has a strict local maximum (minimum) at $x\in\Omega$
\[
F(x,u(x),\nabla\phi(x),D^2\phi(x))\le0\ (\ge0)
\]
Moreover, we say $u$ is a \emph{viscosity solution} of $F[u]=0$ if $u$ is both a viscosity sub- and supersolution.
\end{definition}



\subsection{The quasiconvex envelope}
Use the notation $\LQC[u](x) \equiv L_0(\grad u(x),D^2u(x))$.
The obstacle problem for the QC envelope is given by
\bq\label{QCobstacle}
\tag{Ob}
\begin{cases}
\max\{u-g,-\LQC[u](x)\}=0 & x\in\Omega\\
u =g & x\in\partial\Omega
\end{cases}
\eq
This PDE can have multiple viscosity solutions for a given $g$ \cite{barron2013quasiconvex}. However, there is a unique \emph{quasiconvex} solution which is the QCE of $g$.  

Failure of uniqueness  in general can be seen in the following counter-example, which is similar to~\cite[Example 3.1]{barron2013quasiconvex}.
\begin{example}
Consider $\Omega=\{x^2+y^2<1\}, u(x,y) = -y^4, v(x,y) = -(1-x^2)^2$, so that $u=v$ on $\partial\Omega$ . Now consider $g\equiv u$ on $\Omega$.  We can calculate that $\LQC[u]=\LQC[v]=0$. In summary, $u$ and $v$ are both solutions of \eqref{QCobstacle} but $u\neq v$ in $\Omega$ (in fact $v<u$ in $\Omega$). This contradicts the uniqueness of \eqref{QCobstacle}.
\end{example}

It is shown in \cite{barron2013quasiconvex} that continuous quasiconvex functions necessarily satisfy $\LQC[u] \geq 0$, in the viscosity sense. The converse, however, is not true; consider the function $u(x)=-x^4$, which is concave yet satisfies $\LQC[u]=0$ at $x=0$.
In fact, using this as a sufficient condition is only possible when $u$ has no local maxima \cite{barron2013quasiconvex}.

This operator can be used to completely characterize the set of continuous $\e$-robustly quasiconvex functions, defined below.
\begin{definition}[$\epsilon$-robustly quasiconvex]

The function  $u:\Rn\to\R$ is \emph{$\epsilon$-robustly quasiconvex} if $u(x) +y^\intercal x$ is quasiconvex for every $\abs{y}\leq\epsilon$
\end{definition}
In particular, $\epsilon$-robustly quasiconvex functions are functions whose quasiconvexity is maintained under small linear perturbations. 
Write $\LQC^\epsilon[u](x) \equiv L_\epsilon(\grad u(x),D^2u(x))$.
\begin{proposition}[Characterization of $\epsilon$-robustly quasiconvex functions \cite{barron2013quasiconvex}]
\label{prop:robustQC}
The upper semicontinuous function, $u:\Omega\to\R$, is $\e$-robustly quasiconvex if and only if $u$ is a viscosity subsolution of $\LQC^\epsilon[u]=0$.
\end{proposition}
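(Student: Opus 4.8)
The plan is to reduce both implications to one observation about the constraint sets of $L_0$ and $L_\e$: for any $p\in\Rn$, $M\in S^n$, and $\abs y\le\e$, a unit vector $v$ with $v\cdot(p+y)=0$ obeys $\abs{v\cdot p}=\abs{v\cdot y}\le\e$, so the feasible set of $L_0(p+y,\cdot)$ lies inside that of $L_\e(p,\cdot)$; minimizing $v^\intercal Mv$ over the larger set gives
\bq\label{eq:containment}
L_\e(p,M)\ \le\ L_0(p+y,M),\qquad \abs y\le\e .
\eq
Here ``$u$ is a viscosity subsolution of $\LQC^\e[u]=0$'' is read as: $L_\e(\grad\phi(x_0),D^2\phi(x_0))\ge0$ whenever $\phi\in C^2$ and $u-\phi$ has a strict local maximum at $x_0$ (the proper elliptic operator being $-\LQC^\e$), and likewise for $\LQC$.

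For the forward implication I would argue as follows. Given $\phi\in C^2$ with $u-\phi$ having a strict local maximum at $x_0$, and any unit vector $v$ feasible at $(\grad\phi(x_0),D^2\phi(x_0))$, i.e. $\abs{v\cdot\grad\phi(x_0)}\le\e$ (if none exists, $L_\e=+\infty$ there), set $y:=-(\grad\phi(x_0)\cdot v)\,v$, so $\abs y\le\e$ and $(\grad\phi(x_0)+y)\perp v$. Since $\tilde u:=u+y\cdot x$ is quasiconvex and $\tilde u-(\phi+y\cdot x)=u-\phi$ still has a strict local max at $x_0$, restricting to the line $t\mapsto x_0+tv$ gives a $C^2$ function $\psi(t)=(\phi+y\cdot x)(x_0+tv)$ with $\psi'(0)=(\grad\phi(x_0)+y)\cdot v=0$ and $g(t):=\tilde u(x_0+tv)$ satisfying $g-\psi<(g-\psi)(0)$ for $0<\abs t$ small. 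If $v^\intercal D^2\phi(x_0)v<0$ then $\psi''(0)<0$, hence $\psi(\pm\delta)<\psi(0)$ and so $g(\pm\delta)<g(0)$ for small $\delta>0$; but $x_0$ is the midpoint of $[x_0-\delta v,x_0+\delta v]$, so quasiconvexity of $\tilde u$ forces $\tilde u(x_0)\le\max\{g(-\delta),g(\delta)\}<g(0)=\tilde u(x_0)$, a contradiction. Thus $v^\intercal D^2\phi(x_0)v\ge0$ for every feasible $v$, which is exactly $L_\e(\grad\phi(x_0),D^2\phi(x_0))\ge0$.

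The converse is the harder direction. Fixing $\abs{y_0}\le\e$, I would suppose $w:=u+y_0\cdot x$ is not quasiconvex, so a nonconvex sublevel set produces $a,b$ with $w(c)>\max\{w(a),w(b)\}$ at the midpoint $c$; setting $v=(b-a)/\abs{b-a}$, the upper semicontinuous restriction $\sigma(s):=w(x_1+sv)$ of $w$ to that segment attains its maximum at an interior point $x_1$ (taken at $s=0$), strictly above the two endpoint values. One cannot conclude directly, because this maximum may be ``flat'' (as for $\sigma(s)=-s^4$): no $C^2$ function with strictly negative curvature in the $v$-direction touches $\sigma$ from above at $s=0$ --- exactly the reason $\LQC\ge0$ alone does not imply quasiconvexity. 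The fix is to slide a slightly off-centre downward parabola until it first touches $\sigma$: for a small curvature $\kappa>0$ and a centre placed just outside the segment, on the side dictated by the sign of $y_0\cdot v$, the first contact occurs at an interior point $s^\ast$ where the tangent slope $\bar\sigma'(s^\ast)$ is as small as we like and has the sign that makes $\abs{\bar\sigma'(s^\ast)-y_0\cdot v}\le\e$. Lifting this to a $C^2$ test function on $\Rn$ --- the parabola in the $v$-direction, a strongly convex correction in the transverse directions, and the affine term $-y_0\cdot x$ --- produces $\phi$ for which $u-\phi$ has a local maximum (made strict by a harmless quadratic perturbation) at an interior point $x^\ast$ of the segment, where $v$ is feasible and $v^\intercal D^2\phi(x^\ast)v<0$. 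This contradicts $L_\e\ge0$ in the viscosity sense, so $u+y_0\cdot x$ is quasiconvex for all $\abs{y_0}\le\e$, i.e. $u$ is $\e$-robustly quasiconvex.

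The step I expect to be the real work is this second implication. By \eqref{eq:containment} the subsolution property already passes to $\LQC[u+y_0\cdot x]\ge0$, but --- unlike in the convex case --- that inequality alone does not force quasiconvexity, so one is obliged to exploit the strict slack in $\abs{v\cdot p}\le\e$, used above to contradict the equation at a point with a small nonzero tangent slope rather than at a flat maximum. The bookkeeping (the parabola's curvature and centre, the transverse weight, the strictifying perturbation, the upper test inequality for a merely upper semicontinuous $u$, and pinning down $s^\ast$) is routine and follows \cite{barron2013quasiconvex}.
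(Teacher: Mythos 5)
The paper does not actually prove Proposition~\ref{prop:robustQC}: it is stated as a citation to \cite{barron2013quasiconvex}, so there is no in-paper proof to compare against. Your attempt is therefore a reconstruction of the argument in the cited reference, and it hits the right structure.

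Your forward implication is correct and essentially complete. The reduction --- for a feasible $v$, choose $y=-(\grad\phi(x_0)\cdot v)v$ so that $|y|\le\e$, $(\grad\phi(x_0)+y)\perp v$, and $u+y\cdot x$ is quasiconvex --- followed by the one-dimensional strict-local-max contradiction is exactly the clean way to see that $v^\intercal D^2\phi(x_0)v\ge 0$ for every feasible $v$. The inequality chain giving $g(\pm\delta)<g(0)$ from the strict local max and $\psi''(0)<0$ checks out.

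For the converse you correctly diagnose both that the containment $L_\e(p,M)\le L_0(p+y,M)$ gives only the weak conclusion $-\LQC[u+y_0\cdot x]\le 0$ (which, by the $-x^4$ example, does not imply quasiconvexity), and that the essential extra ingredient is the $\e$-slack in $|v\cdot p|\le\e$, exploited by touching with an \emph{off-centre} parabola of small curvature so that the test gradient in direction $v$ is a small nonzero number of the appropriate sign. This is the right idea and matches the spirit of the argument in \cite{barron2013quasiconvex}. However, you leave the hard bookkeeping as ``routine'': the sign arrangement for $\bar\sigma'(s^\ast)$ relative to $y_0\cdot v$, the construction of the transverse part of the $C^2$ test function so that the touching really is a local maximum of $u-\phi$ in $\Rn$ (not just along the line) when $u$ is merely upper semicontinuous, and the strictification. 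These are the steps that actually consume the effort in the reference, so while your outline is sound, it should not be mistaken for a full proof; in this paper the authors sidestep all of it by citing the result, and in Proposition~\ref{prop:subQC} they work instead with the stronger operator $F^\e$ precisely to avoid reproducing this technical argument.
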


Next we show that viscosity subsolutions (defined below) of our operator  are robustly-QC (which also implies they are QC).  This allows us  avoid a technical argument from \cite{barron2013quasiconvex}.  We believe that stronger results hold: see the formal analysis in \S\ref{sec:Formal}.

\begin{proposition}\label{prop:subQC}
Suppose  $u\in USC(\overline\Omega)$ is a viscosity subsolution of $\e-F^\e[u]=0$. Then $u$ is $\e^2$-robustly quasiconvex.
\end{proposition}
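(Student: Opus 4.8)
The plan is to reduce the statement to a single pointwise comparison between the operators $F^\e$ and $L_{\e^2}$, and then to pass it through the definition of viscosity subsolution together with the characterization of robust quasiconvexity in Proposition~\ref{prop:robustQC}. First I would unwind the hypothesis. Since $F^\e(p,M)$ is nondecreasing in $M$, the equation $\e - F^\e[u]=0$ is degenerate elliptic in the sense of Definition~\ref{def:elliptic}, and by Definition~\ref{def:viscosity} the assumption that $u\in USC(\overline\Omega)$ is a viscosity subsolution of $\e - F^\e[u]=0$ means precisely that, for every $\phi\in C^2(\Omega)$ such that $u-\phi$ has a strict local maximum at $x\in\Omega$,
\[
F^\e\big(\grad\phi(x),D^2\phi(x)\big)\ge\e .
\]
By Proposition~\ref{prop:robustQC} it then suffices to show that $u$ satisfies $\LQC^{\e^2}[u]\ge0$ in the viscosity sense, i.e. that $L_{\e^2}\big(\grad\phi(x),D^2\phi(x)\big)\ge0$ for every such test function $\phi$.

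The only substantive ingredient is the elementary inequality
\[
F^\e(p,M)\le L_{\e^2}(p,M)+\e\qquad\text{for all }p\in\Rn,\ M\in S^n .
\]
To prove it, let $v_0$ attain the minimum defining $L_{\e^2}(p,M)$; such a $v_0$ exists because the constraint set $\{\,v:\abs{v}=1,\ \abs{v^\intercal p}\le\e^2\,\}$ is compact (and if this set is empty we use the convention $L_{\e^2}(p,M)=+\infty$, so there is nothing to prove). Then $v_0$ is admissible in the unconstrained minimization defining $F^\e(p,M)$, hence
\[
F^\e(p,M)\le v_0^\intercal M v_0+\frac{1}{\e}\,\abs{v_0^\intercal p}\le L_{\e^2}(p,M)+\frac{1}{\e}\cdot\e^2=L_{\e^2}(p,M)+\e ,
\]
using $v_0^\intercal M v_0=L_{\e^2}(p,M)$ and $\abs{v_0^\intercal p}\le\e^2$. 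In words: on the feasible slab of the constrained operator the penalty term of $F^\e$ is at most $\e$, so discarding it costs at most $\e$; this is also what pins down the exponent $\e^2$, since a slab of half-width $\delta$ contributes a penalty up to $\delta/\e$, which is $\le\e$ exactly when $\delta\le\e^2$.

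Combining the two displays, at any admissible test point $x$ we get
\[
L_{\e^2}\big(\grad\phi(x),D^2\phi(x)\big)\ge F^\e\big(\grad\phi(x),D^2\phi(x)\big)-\e\ge\e-\e=0 ,
\]
so $u$ satisfies $\LQC^{\e^2}[u]\ge0$ in the viscosity sense, and Proposition~\ref{prop:robustQC} yields that $u$ is $\e^2$-robustly quasiconvex.

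I do not expect a serious obstacle: once the comparison $F^\e\le L_{\e^2}+\e$ is in hand the argument is purely soft, and that comparison is nothing more than the observation that the penalty $\frac{1}{\e}\abs{v^\intercal p}$ is small exactly on the slab defining $L_{\e^2}$. The points that deserve a line of care are bookkeeping ones: keeping the orientation straight when translating "viscosity subsolution of $\e - F^\e[u]=0$" into the inequality $F^\e[u]\ge\e$ and into $\LQC^{\e^2}[u]\ge0$ in the viscosity sense (recall that $L_{\e^2}$ is nondecreasing in $M$, so it is $-\LQC^{\e^2}$ that carries the degenerate-elliptic orientation); checking that the test-function conventions of Definition~\ref{def:viscosity} and of Proposition~\ref{prop:robustQC} agree so that the reduction is literal; and dispatching the degenerate cases $p=0$ or $n=1$, where the constraint set of $L_{\e^2}$ is either the whole unit sphere or empty and the inequality is immediate.
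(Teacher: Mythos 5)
Your proof is correct and follows essentially the same route as the paper's: both hinge on the pointwise inequality $F^\e(p,M)\le L_{\e^2}(p,M)+\e$ and then transfer the viscosity subsolution property through Proposition~\ref{prop:robustQC}. The paper derives the inequality by restricting the feasible set in the $\min$ defining $F^\e-\e$ to the slab $\abs{v^\intercal p}\le\e^2$ and discarding the now-nonpositive penalty term, whereas you plug the minimizer $v_0$ of $L_{\e^2}$ directly into the unconstrained $\min$ defining $F^\e$; these are just two phrasings of the same observation, so the arguments are substantively identical.
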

\begin{proof}
First observe that for any $\phi\in C^2$ we have the following inequality:

\begin{align*}
F^\e[\phi](x) - \frac{\alpha}{\e} &=  \min_{\abs{v}=1}\left\{\frac{1}{{\e}}\abs{\nabla \phi(x)\cdot v} + \phi_{vv}(x)  - \frac{\alpha}{\e} \right\}\\
&\leq   \min_{\{\abs{v}=1,\abs{\nabla \phi(x)\cdot v}\le \alpha \}}\left\{ \phi_{vv}(x)\right\}
= \lambda_{QC}^{\alpha}[\phi](x)
\end{align*}
Choosing $\alpha=\e^2$ we see that any viscosity subsolution of $\e-F^\e[u]= 0$ is also a viscosity subsolution of $-\LQC^{\e^2}[u]= 0$. Thus  $u$ is $\e^2$-robustly quasiconvex. 
\end{proof}

\section{Properties of solutions}

In this section we present technical arguments proving the uniqueness of solutions of our PDEs, and discuss some relevant properties.

\subsection{Comparison principle}
In this section we will show that a weak comparison principle holds for the Dirichlet problem of $h-F^\e[u]=0$, for $h>0$ and $\e > 0$. Comparison also holds for the corresponding obstacle problem. The proof we present is based on the uniqueness proof of $\LQC[u]=g$, for $g>0$, presented in \cite{barron2013uniqueness}.  The result is simpler because our operator is continuous as a function $(p,M)\mapsto F^\e(p,M)$ for $\e > 0$. 

Write
\bq\label{Feps}
F^\e[u](x)=F^\e(\nabla u(x),D^2u(x)) \equiv \min_{\abs{v}=1}\left\{\frac{1}{{\e}}\abs{\nabla u(x)\cdot v} + v^\intercal D^2(x)v\right\}
\eq
Note that $-F^\e(p,M)$ is elliptic by Definition \ref{def:elliptic}. 
We consider the following PDEs:

\bq 
\label{eQC}
\tag{$\e$-QC}
h(x)-F^\e[u](x) = 0 \quad x\in\Omega
\eq
\bq 
\label{eQCE}
\tag{$\e$-QCE}
\max\{u(x)-g(x),h(x)-F^\e[u](x)\} = 0 \quad x\in\Omega
\eq
where $\Omega\subset\Rn$ is an open, bounded, and convex domain, and $g:\overline\Omega\to\R$ is continuous. In the latter equation, $g$ is the obstacle. We also impose the following condition on $h$:
\bq\label{h_assumption}
\text{$h:\overline\Omega\to\R$ is continuous and positive.}
\eq
Enforce the following Dirichlet boundary data:
\bq\label{dirichlet}
\tag{Dir}
u(x) = g(x)\quad x\in\partial\Omega
\eq

\begin{remark}[Continuity up to the boundary]\label{rmk:bc}
In general, viscosity solutions of \eqref{eQCE} need not be continuous up to the boundary.
To apply \cite{BS91} for  convergence of numerical schemes, we need a strong comparison principle, which requires that solutions be continuous up to the boundary.  The following assumption ensures continuity up to the boundary
There is a convex domain $\Omega_L\supset\Omega$ and a continuous, quasiconvex function $g_0: \Omega_L \to \R$, with 
\[
g_0(x)\le g(x) \quad \text{ for } x \in \Omega,
\qquad
g_0(x)=g(x), \quad \text{ for } x\in \Omega_L\setminus\Omega.
\]
Continuity follows because we have $g_0 \leq u \leq g$ which gives $u = g$ on $\partial \Omega$.
An alternative to this condition is to prove convergence in a neighbourhood of the boundary.  The convergence proof in this setting can be found in~\cite{froeseGauss}.
 \end{remark}

Next we state a technical, but standard, viscosity solutions result, which gives the comparison principle in the case where we have strict sub and supersolutions.

\begin{proposition}
	[Comparison principle for strict subsolutions \cite{CIL}]\label{prop:Comp1}
Consider the Dirichlet problem  for the degenerate elliptic operator $F(p,M)$ on the bounded domain $\Omega$.  Let $u \in USC(\bar \Omega)$ be a viscosity subsolution and let $v \in LSC(\bar \Omega)$ be a viscosity supersolution.  Suppose further that for $\sigma > 0$,
\begin{align*}
	F[u] + \sigma  &\le  0 \text{ in } \Omega
\\
 	F[v] &\ge 0 \text{ in } \Omega
\end{align*}
holds in the viscosity sense.  Then the comparison principle holds:
\[
\text{ if $u\leq v$ on $\partial\Omega$ then $u\le v$ in $\Omega$ }
\]
\end{proposition}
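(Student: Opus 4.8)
The plan is to argue by contradiction via the doubling-of-variables method, using the maximum principle for semicontinuous functions (Ishii's lemma) as in \cite{CIL}. Two features make this the easy case of the comparison argument: $F=F(p,M)$ depends on neither $x$ nor $r$, so no modulus-of-continuity estimate in the spatial variable is needed; and the subsolution inequality already carries the strict margin $\sigma$, so there is no need to subtract an auxiliary strict subsolution from $u$ or to perturb $v$.

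Suppose the conclusion fails, so that $M:=\sup_{\overline\Omega}(u-v)>0$. Since $u-v$ is upper semicontinuous on the compact set $\overline\Omega$ the supremum is attained, and since $u\le v$ on $\partial\Omega$ it is attained at an interior point. For $\eta>0$ set
\[
\Phi_\eta(x,y) = u(x) - v(y) - \frac{1}{2\eta}\abs{x-y}^2, \qquad (x,y)\in\overline\Omega\times\overline\Omega,
\]
and let $(x_\eta,y_\eta)$ maximize $\Phi_\eta$. By the standard penalization lemma (\cite[Lemma~3.1]{CIL}), along a subsequence $\eta^{-1}\abs{x_\eta-y_\eta}^2\to0$, both $x_\eta$ and $y_\eta$ converge to a common interior maximizer of $u-v$, and $\Phi_\eta(x_\eta,y_\eta)\to M$; in particular $x_\eta,y_\eta\in\Omega$ for $\eta$ small.

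Write $p_\eta:=\eta^{-1}(x_\eta-y_\eta)$. The maximum principle for semicontinuous functions (\cite[Theorem~3.2]{CIL}), applied to $\Phi_\eta$ at its interior maximum, produces symmetric matrices $X\le Y$ such that $(p_\eta,X)$ lies in the closure of the second-order superjet of $u$ at $x_\eta$ and $(p_\eta,Y)$ lies in the closure of the second-order subjet of $v$ at $y_\eta$; the inequality $X\le Y$ is read off from the matrix bound the theorem yields for the penalty $\frac{1}{2\eta}\abs{x-y}^2$. The semijet form of the viscosity subsolution inequality for $u$ then gives $F(p_\eta,X)+\sigma\le0$, that for $v$ gives $F(p_\eta,Y)\ge0$, and degenerate ellipticity (Definition~\ref{def:elliptic}) applied to $X\le Y$ gives $F(p_\eta,Y)\le F(p_\eta,X)$. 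Hence
\[
0 \;\le\; F(p_\eta,Y) \;\le\; F(p_\eta,X) \;\le\; -\sigma \;<\; 0,
\]
a contradiction, so $u\le v$ in $\Omega$.

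The only nontrivial input is the pair of standard facts used above — the localization of the penalized maxima to the interior and Ishii's lemma — which I would cite from \cite{CIL} rather than reprove, since both are routine for operators independent of $x$. I would also note that the argument uses $F$ only through its values on jet elements and never through any continuity of $F$, so it applies equally to discontinuous operators; continuity of $F^\e$ is what enters later, when promoting this strict comparison to ordinary comparison for \eqref{eQC} and \eqref{eQCE}.
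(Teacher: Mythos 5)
The paper does not actually prove this proposition: it states it as a known consequence of the main comparison theorem of \cite{CIL} (Theorem~3.3 together with the perturbation discussion of Section~5.C there), and the remark immediately following the statement says exactly this. So by supplying a genuine doubling-of-variables argument you are going beyond what the paper does. Your proof is correct, and the simplifications you flag are real: because $F=F(p,M)$ has no $x$- or $r$-dependence, Ishii's lemma hands you the same gradient $p_\eta$ at both points and a clean $X\le Y$ (the $A+\epsilon A^2$ bound evaluated on $(\xi,\xi)$ gives $\xi^\intercal X\xi\le\xi^\intercal Y\xi$), which degenerate ellipticity turns into $F(p_\eta,Y)\le F(p_\eta,X)$; and since the subsolution already carries the margin $\sigma$, the chain $0\le F(p_\eta,Y)\le F(p_\eta,X)\le-\sigma$ is an immediate contradiction with no further perturbation. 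This is the \cite{CIL} argument stripped to its simplest case, and it is precisely what the paper is invoking by citation.

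One caveat: your closing aside that the argument ``never uses any continuity of $F$'' and ``applies equally to discontinuous operators'' is not right. Ishii's lemma produces elements of the \emph{closed} semijets $\bar J^{2,+}u(x_\eta)$ and $\bar J^{2,-}v(y_\eta)$, and to read off $F(p_\eta,X)+\sigma\le 0$ and $F(p_\eta,Y)\ge 0$ from the closed-semijet form of the viscosity inequalities you must pass a limit of the form $F(p_n,X_n)\le-\sigma$, $(p_n,X_n)\to(p_\eta,X)$, through $F$; that requires lower semicontinuity of $F$ on the subsolution side and upper semicontinuity on the supersolution side — in practice, continuity in $(p,M)$. This is exactly the hypothesis the paper flags when it notes that ``our operator is continuous as a function $(p,M)\mapsto F^\e(p,M)$ for $\e>0$,'' in contrast to $L_0$: continuity of $F^\e$ is what makes Proposition~\ref{prop:Comp1} applicable here, it does not only enter downstream in Proposition~\ref{comparison}. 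The proof itself is fine; just drop or correct that remark.
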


\begin{remark}
In \cite[Section 5.C]{CIL}, it is explained how the main comparison theorem, \cite[Theorem 3.3]{CIL},
can be applied when it is possible to perturb a subsolution to a strict subsolution.   This version of the theorem is what we state in Proposition \ref{prop:Comp1}.  This result was used in \cite[Theorem 3.1]{Bardi2006709} and  \cite{bardi2013comparison} to prove a comparison principle.
\end{remark}

Using the the same perturbation technique from \cite{barron2013uniqueness}, and consequently applying Proposition \ref{prop:Comp1}, we obtain the following comparison result.

\begin{proposition}[Comparison principle]\label{comparison}

Consider the Dirichlet problem given by \eqref{eQC}, \eqref{dirichlet} and assume \eqref{h_assumption} holds. Let $u\in USC(\overline\Omega)$ be a viscosity subsolution and $v\in LSC(\overline\Omega)$ be a viscosity supersolution of \eqref{eQC}.  Then the comparison principle holds:
\[
\text{ if $u\leq v$ on $\partial\Omega$ then $u\le v$ in $\Omega$ }
\]
\end{proposition}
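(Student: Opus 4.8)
The plan is to reduce the statement to Proposition~\ref{prop:Comp1}: it is enough to perturb the subsolution $u$ into a \emph{strict} subsolution $u_\lambda$ that still satisfies $u_\lambda \le v$ on $\partial\Omega$, invoke the comparison principle for strict subsolutions, and then let the perturbation parameter tend to its trivial value to recover $u \le v$ in $\Omega$. The perturbation mechanism is the one from \cite{barron2013uniqueness}, and it rests on two observations: that $h$ is bounded below by a positive constant, and that $F^\e$ is positively $1$-homogeneous in $(p,M)$.

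Concretely, since $h$ is continuous and positive on the compact set $\overline\Omega$, we have $m := \min_{\overline\Omega} h > 0$, and from \eqref{Feps} one reads off $F^\e(\lambda p,\lambda M) = \lambda\,F^\e(p,M)$ for every $\lambda>0$. Set $C := \sup_{\overline\Omega} u$, which is finite because $u\in USC(\overline\Omega)$ and $\overline\Omega$ is compact, and for $\lambda>1$ define
\[
u_\lambda := \lambda u + (1-\lambda) C .
\]
I would then check, by the usual test-function bookkeeping, that $u_\lambda$ is a viscosity subsolution of $F^\e[u_\lambda] \ge \lambda h$: if $u_\lambda-\phi$ has a strict local maximum at $x$, then $\psi := \tfrac1\lambda\phi - \tfrac{1-\lambda}{\lambda}C$ is an admissible test function for $u$ at $x$, and the subsolution property of $u$ together with the homogeneity of $F^\e$ gives $F^\e(\nabla\phi(x),D^2\phi(x)) = \lambda\,F^\e(\nabla\psi(x),D^2\psi(x)) \ge \lambda h(x) \ge h(x) + (\lambda-1)m$. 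Hence $u_\lambda$ is a \emph{strict} subsolution of $h-F^\e[u]=0$, with slack $\sigma := (\lambda-1)m>0$. Moreover, on $\partial\Omega$ we have $u_\lambda = u + (\lambda-1)(u-C) \le u \le v$, because $u-C\le0$ and $\lambda-1>0$; here the additive constant is precisely what repairs the boundary condition that a bare rescaling $\lambda u$ would spoil.

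Proposition~\ref{prop:Comp1}, applied to the degenerate elliptic operator $h - F^\e$ with the strict subsolution $u_\lambda$ and the supersolution $v$, then yields $u_\lambda\le v$ throughout $\Omega$. Letting $\lambda\to1^+$, the identity $u_\lambda = \lambda u + (1-\lambda)C$ shows $u_\lambda\to u$ pointwise in $\Omega$, so $u\le v$ in $\Omega$, which is the claim. The same argument, carrying the outer $\max\{\,\cdot-g,\,\cdot\,\}$ along (a monotone operation in both slots, and the shift only lowers $u_\lambda-g$ on $\partial\Omega$), gives comparison for the obstacle problem \eqref{eQCE}. I do not expect a serious obstacle: the one genuinely delicate point in \cite{barron2013uniqueness} — that $\LQC$ is discontinuous in $(p,M)$ — is absent here since $F^\e$ is continuous for $\e>0$, so the only real work is the routine verification that $u_\lambda$ is a strict subsolution and that $u_\lambda\le v$ on $\partial\Omega$.
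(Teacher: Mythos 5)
Your proposal is correct and follows essentially the same route as the paper: scale the subsolution by a factor $\lambda = 1+\sigma > 1$, subtract a constant to preserve the boundary inequality, and use positive $1$-homogeneity of $F^\e$ together with $h \ge h_{\min} > 0$ to produce a strict subsolution to which Proposition~\ref{prop:Comp1} applies. The only cosmetic difference is that you shift by $\sup_{\overline\Omega} u$ whereas the paper shifts by $\max_{\partial\Omega} u$; both constants make the boundary check go through.
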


\begin{proof}
We will show that we can perturb $u$ to a function $u^\sigma$ satisfying $u^\sigma\le v$ on $\partial\Omega$ and
\[
h-F^\e[u^\sigma]<0 \quad\text{in $\Omega$}
\]
in the viscosity sense. Applying Proposition \ref{prop:Comp1}, we will have that $u^\sigma\le v$ in $\Omega$. Taking $\sigma\to0$ yields the desired result. 

Fix $\sigma>0$ and define the following perturbation of $u$, and notice that for $x\in\partial\Omega$ we have the following relation:
\[
u^\sigma(x) \equiv u(x) - \sigma \left(\max_{y\in\partial\Omega}u(y)-u(x)\right)\le u(x) - \sigma (u(x)-u(x)) = u(x)\le v(x)
\]
That is, $u^\sigma\le v$ on $\partial\Omega$. Next, because $u$ is a subsolution, we have the following
\[
h-\frac{1}{1+\sigma}F^\e[u^\sigma]  \le 0
\]
leading to
\[
h - F^\e[u^\sigma]\le -\sigma h \le -\sigma h_{\min} <0\quad\text{where  $h_{\min}\equiv \min_{x\in\overline\Omega}h(x)$}
\qedhere
\]
\end{proof}

We use this result to prove a weak comparison principle for the obstacle problem given by \eqref{eQCE}, \eqref{dirichlet}.

\begin{corollary}[Comparison principle for the obstacle problem]
\label{e-CP} 
Consider the Dirichlet problem given by \eqref{eQCE}, \eqref{dirichlet} and assume \eqref{h_assumption} holds. Let $u\in USC(\overline\Omega)$ be a viscosity subsolution and $v\in LSC(\overline\Omega)$ be a viscosity supersolution of \eqref{eQCE}.  Then the comparison principle holds:
\[
\text{ if $u\leq v$ on $\partial\Omega$ then $u\le v$ in $\Omega$ }
\]
\end{corollary}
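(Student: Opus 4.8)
The plan is to reduce the obstacle problem \eqref{eQCE} to the Dirichlet problem \eqref{eQC} by showing that the subsolution already satisfies the un-obstructed equation wherever it matters, and then invoke Proposition \ref{comparison}. First I would unpack what it means for $u \in USC(\overline\Omega)$ to be a viscosity subsolution of \eqref{eQCE}: at any point where a $C^2$ test function $\phi$ touches $u$ from above at a strict local maximum, we have $\max\{u(x)-g(x),\,h(x)-F^\e[\phi](x)\} \le 0$, hence \emph{both} $u(x)\le g(x)$ and $h(x)-F^\e[\phi](x)\le 0$. The second inequality says that $u$ is automatically a viscosity subsolution of \eqref{eQC} on all of $\Omega$, with no use of the obstacle. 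Symmetrically, for the supersolution $v \in LSC(\overline\Omega)$: at a point where $\psi$ touches from below at a strict local minimum, $\max\{v(x)-g(x),\,h(x)-F^\e[\psi](x)\}\ge 0$, so \emph{at least one} of $v(x)\ge g(x)$ or $h(x)-F^\e[\psi](x)\ge 0$ holds — this is the genuinely obstacle-flavored statement and where the argument needs care.

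The key step is then the following dichotomy argument. Suppose for contradiction that $\max_{\overline\Omega}(u-v) = m > 0$; since $u\le v$ on $\partial\Omega$, the maximum is attained at an interior point, or more precisely we run the standard doubling-of-variables / sup-convolution machinery of Proposition \ref{prop:Comp1} at such a near-maximal configuration. At the relevant touching point $x_0 \in \Omega$ produced by that machinery, $v$ is touched from below by a test function $\psi$ and we know $v(x_0) \ge u(x_0) + (\text{something} > 0) \ge u(x_0)$ but also $u(x_0)\le g(x_0)$ from the subsolution property. If the obstacle branch were active for $v$ at $x_0$, i.e. $v(x_0)\ge g(x_0)$, then $v(x_0)\ge g(x_0)\ge u(x_0)$, which contradicts $u(x_0) - v(x_0) > 0$. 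Hence the obstacle branch is inactive and we must have $h(x_0) - F^\e[\psi](x_0)\ge 0$ — that is, $v$ behaves as a supersolution of \eqref{eQC} at exactly the points relevant to the comparison. Combined with $u$ being a global subsolution of \eqref{eQC}, Proposition \ref{comparison} (or rather the strict-subsolution perturbation $u^\sigma$ used in its proof, fed into Proposition \ref{prop:Comp1}) yields $u\le v$ in $\Omega$, a contradiction.

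Concretely, the cleanest write-up runs the perturbation first: given $\sigma>0$, form $u^\sigma$ exactly as in the proof of Proposition \ref{comparison}, so that $u^\sigma \le v$ on $\partial\Omega$ and $h - F^\e[u^\sigma] \le -\sigma h_{\min} < 0$ in $\Omega$ in the viscosity sense, while still $u^\sigma \le u \le g$. Assume $\max_{\overline\Omega}(u^\sigma - v) > 0$, attained at an interior point via the doubling argument of \cite{CIL}; at the touching point the supersolution inequality for $v$ gives one of the two branches, the obstacle branch is excluded because it would force $v \ge g \ge u \ge u^\sigma$ there, so the PDE branch $h - F^\e[\cdot] \ge 0$ holds for $v$, and then the strict-subsolution comparison of Proposition \ref{prop:Comp1} applies to give $u^\sigma \le v$ — contradiction. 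Letting $\sigma \to 0$ finishes the proof.

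The main obstacle I expect is purely bookkeeping rather than conceptual: making sure that "the obstacle branch for $v$ is inactive" is asserted at precisely the point (or limiting pair of points) where the degenerate-elliptic comparison theorem needs the supersolution inequality, and that this point lies strictly inside $\Omega$ so that the obstacle argument $v \ge g \ge u$ can be chained correctly. This is standard for obstacle problems with an upper obstacle of this form, but one has to be careful that $u$ is genuinely $\le g$ there — which is guaranteed since $u$ being a subsolution of \eqref{eQCE} forces $u \le g$ everywhere in $\Omega$, not just at touching points, because $g$ is continuous and the constant function $g(x_0)$ serves as a test function touching from above at any interior maximum of $u - g$. So the pointwise inequality $u \le g$ on $\Omega$ can be established once and for all at the outset, which streamlines the dichotomy.
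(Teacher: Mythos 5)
Your proposal is correct but takes a genuinely different route from the paper's. The paper's proof is a domain decomposition: it sets $\Omega_g=\{x\in\Omega:v(x)<g(x)\}$, observes that there $v$ is a viscosity supersolution of the \emph{un-obstructed} equation $h-F^\e[v]=0$ while $u$ is a subsolution of it on all of $\Omega$ with $u\le g$, checks that $u\le v$ on $\partial\Omega_g$ (on the interior part of the boundary because $u\le g\le v$ there, on $\partial\Omega$ by hypothesis), and then invokes Proposition~\ref{comparison} as a black box on $\Omega_g$; on $\Omega\setminus\Omega_g$ the ordering $u\le g\le v$ is immediate. Your proposal instead re-opens the doubling-of-variables machinery behind Proposition~\ref{prop:Comp1} and shows the obstacle branch for $v$ is inactive at the near-maximizing configuration. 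That does work, but it hides two bits of bookkeeping that the paper's decomposition sidesteps. First, the obstacle-exclusion step at the supersolution's touching point $y_n$ needs $v(y_n)<g(y_n)$, whereas the maximum directly gives $v(y_n)<u^\sigma(x_n)\le g(x_n)$ at a \emph{different} point $x_n$; one must use $x_n,y_n\to\hat x$, continuity of $g$, and the persistent positive gap $u^\sigma(x_n)-v(y_n)\to\max(u^\sigma-v)>0$ to conclude. Second, once the obstacle branch is excluded at those points you cannot literally \emph{apply} Proposition~\ref{prop:Comp1}: it requires $v$ to be a supersolution of the PDE everywhere in the relevant domain, while you only know the PDE inequality holds for $v$ along the special sequence $y_n$. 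So your clean write-up is really a re-run of the proof of Proposition~\ref{prop:Comp1}, not a reduction to it. Also, the aside claiming $u\le g$ everywhere follows by testing with the constant $\phi\equiv g(x_0)$ is not quite right (that constant need not touch $u$ from above); the standard way is to penalize, take a max of $u-|x-x_0|^2/\delta$, and pass $\delta\to0$. None of this is fatal, but it is exactly the bookkeeping the paper's restriction-to-$\Omega_g$ argument avoids.
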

\begin{proof}
We begin by considering the domain $\Omega_g\equiv\{x\in\Omega:v(x)<g(x)\}$. Then $h-F^\e[v]\ge 0$ in $\Omega_g$ and $v=g$ on $\partial\Omega_g$. That is, $v$ is a viscosity supersolution of $h-F^\e[v]=0$ in $\Omega_g$. Now, by the definition of viscosity subsolutions we have that $u\le g$ and $h-F^\e[u]\le 0$ in $\Omega$, and thus also $\Omega_g$. This allows us to conclude that $u\le v$ on $\partial\Omega_g$. Therefore by Proposition \ref{comparison}, $u\le v$ in $\Omega_g$. Concluding, in $\Omega\setminus\Omega_g$ we necessarily have that $v\ge g\ge u$.
\end{proof}

\subsection{The strictly quasiconvex envelope}\label{sec:approxOb}

We formulate the \emph{strictly quasiconvex envelope} of a function $g$ as the unique viscosity solution of the following obstacle problem.
\bq 
\label{approxOb}
\tag{$\e$-Ob}
\begin{cases}
\max\{u(x)-g(x),\e-F^\e[u](x)\} = 0 & x\in\Omega\\
u(x) = g(x) & x\in\partial\Omega
\end{cases}
\eq

\begin{remark}[Convergence of approximate solutions]\label{rmk:QCstability}
It is clear that as $\e\to0$, the penalization term in $F^\e[u]$ tends to infinity. The result is that $F^\e[u]\to\LQC[u]$ as $\e\to0$. From this observation, the standard stability result of viscosity solutions, and the quasiconvexity of subsolutions of $\e-F^\e[u]=0$ , one can then apply the same argument presented in the proof of \cite[Theorem 5.3]{barron2013quasiconvex} to conclude that the unique viscosity solutions of \eqref{eQCE}, \eqref{dirichlet} converge to the quasiconvex envelope as $\e\to0$. This result allows us to compute asymptotic approximations of the quasiconvex envelope of a given obstacle.
\end{remark}

\subsection{Solution formula in one dimension}\label{sec:1dsoln}
In one dimension, $\e-F^\e[u]$ is simply the Eikonal operator with a small diffusion term. When considering a solution $u$ of \eqref{approxOb}, whenever $u(x) < g(x)$, we have that $F^\e[u](x) = \e$, which gives 
\bq\label{F1d}
\e u_{xx} + \abs{u_x}=\e^2 
\eq
Define $\Omega_g\equiv\{x:u(x) < g(x)\}$. Note that $\Omega_g$ need not be connected; it can be written as the union of finitely many intervals (refer to \autoref{fig:differentQC}).  However, we can solve the equation in each interval.  
\begin{lemma}[1D solution]
The viscosity solution of the one dimensional PDE for the operator \eqref{Feps}, \eqref{F1d},
along with boundary conditions $u(0)=0, u(W) = H$
is given by the following.  Set $S\equiv H/\e^2W$. 
Then
\[
u^\e(x) = 
\begin{cases}
\pm \e^2 x & S=\pm1\\
\e^2x + C^+(1-\exp(-x/\e)) & S>1\\
-\e^2x + C^-(1-\exp(x/\e)) & S<-1\\
\e^2\abs{x-x^*} + \e^3(\exp(-\abs{x-x^*}/\e)-1) + u_0 & \abs{S}<1
\end{cases}
\]
where
\[
C^\pm \equiv \frac{H\mp\e^2 W}{\pm\e(1-\exp(\mp W/\e))}
\]
and $u_0\in\R$ and $x^*\in I = (0,W)$ are constants.
\end{lemma}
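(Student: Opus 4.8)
The plan is to solve the ordinary differential equation $\e u_{xx} + |u_x| = \e^2$ on $(0,W)$ by reducing it to a first-order equation for $w = u_x$, treating the sign of $u_x$ as locally constant, and then gluing the resulting pieces together into a viscosity solution that matches the boundary data $u(0)=0$, $u(W)=H$. First I would observe that on any subinterval where $u_x$ has constant sign $\sigma \in \{+1,-1\}$, the equation becomes the linear ODE $\e w' + \sigma w = \e^2$, whose general solution is $w(x) = \sigma\e^2 + K e^{-\sigma x/\e}$ for a constant $K$; integrating once gives $u$ up to an additive constant. The cases in the statement then correspond to: (i) $u_x$ nonnegative throughout (giving the $S>1$ branch, where $u$ is increasing and the exponential correction is controlled by the constant $C^+$ fixed by $u(W)-u(0)=H$); (ii) $u_x$ nonpositive throughout (the $S<-1$ branch, symmetric under $x \mapsto W-x$); and (iii) the genuinely quasiconvex case $|S|<1$, where $u$ must have an interior minimum at some $x^* \in (0,W)$, is decreasing on $(0,x^*)$ and increasing on $(x^*,W)$, with the two exponential pieces matched continuously and with matching (zero) derivative at $x^*$.

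The key computational steps, in order, are: (1) integrate the linear first-order ODE for $w=u_x$ on a sign-definite interval and then integrate again to get $u(x) = \sigma\e^2 x - \sigma\e K e^{-\sigma x/\e} + \text{const}$; (2) in the one-sided cases impose $u(0)=0$ and $u(W)=H$ to pin down the two constants, which yields exactly the stated $C^{\pm}$ formula after simplification — this is where the algebraic identity $C^\pm = (H \mp \e^2 W)/(\pm\e(1-\exp(\mp W/\e)))$ must be verified, and one should check the sign condition ($S>1$ forces $C^+ > 0$ so that $u_x = \e^2 + \e^{-1}C^+ e^{-x/\e} > 0$ is indeed positive, consistent with the assumed sign, and symmetrically for $S<-1$); (3) in the case $|S|<1$, build $u$ piecewise: on the branch where $u_x \le 0$ one gets $u_x = -\e^2 + K^- e^{x/\e}$ and on the branch where $u_x \ge 0$ one gets $u_x = \e^2 + K^+ e^{-x/\e}$; requiring $u_x(x^*)=0$ from both sides fixes $K^\pm$ in terms of $x^*$, and the resulting formula collapses to the closed form $u^\e(x) = \e^2|x-x^*| + \e^3(\exp(-|x-x^*|/\e)-1) + u_0$; (4) finally, impose the two boundary conditions $u(0)=0$, $u(W)=H$, which give two equations determining $u_0$ and $x^*$ (with $x^* \in (0,W)$ guaranteed precisely when $|S|<1$), and confirm that the boundary cases $S=\pm 1$ arise as degenerate limits where $x^* \to 0$ or $x^* \to W$, reducing to the linear profile $u = \pm\e^2 x$.

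The main obstacle, I expect, is not the ODE integration (which is routine) but rather the verification that the glued function is a genuine \emph{viscosity} solution of the equation in the sense of Definition~\ref{def:viscosity}, and in particular that the piecing at $x^*$ in the $|S|<1$ case does not introduce a spurious downward kink. Since $u$ is built so that $u_x$ is continuous and equals $0$ at $x^*$, the function is actually $C^1$ there (indeed $C^2$, since $u_{xx} = \e^2 - |u_x|/\e$ is continuous), so no subtlety with test functions touching from below arises — but one must still check that $F^\e[u] \ge \e$ holds everywhere on $(0,W)$ with equality on $\Omega_g$, i.e. that $u$ does not cross the obstacle $g$ prematurely; this is where one invokes that within each connected component of $\Omega_g$ the boundary values are dictated by $g$, and the lemma is really a statement about one such component with $u(0)=g(0)=0$, $u(W)=g(W)=H$ (after translating coordinates). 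A secondary point to handle carefully is monotonicity and the sign bookkeeping: one should verify that in the $|S|<1$ branch the formula does give $u_x < 0$ on $(0,x^*)$ and $u_x > 0$ on $(x^*,W)$, so that the assumed sign decomposition used to derive the ODE solution is self-consistent, and that uniqueness follows from the comparison principle (Proposition~\ref{comparison} / Corollary~\ref{e-CP}) applied in one dimension.
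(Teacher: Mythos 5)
Your proposal follows the same strategy as the paper's own proof: reduce the second-order ODE $\e u_{xx}+|u_x|=\e^2$ to the sign-based case analysis ($u_x>0$ throughout, $u_x<0$ throughout, or an interior zero of $u_x$), integrate the resulting linear ODEs, and determine the free constants ($C^\pm$, or $u_0$ and $x^*$) from the boundary data, with $x^*$ pinned down by an intermediate-value argument exactly when $|S|<1$. The paper handles the smooth-gluing concern at $x^*$ by a Taylor-expansion remark showing $u(x)=x^2/2+\bO(x^3)$ near the minimum — the same $C^2$ observation you make — so your treatment is essentially the paper's, with somewhat more explicit attention to the viscosity-solution check and to uniqueness via the comparison principle.
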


\begin{proof}
If there exists $x_0$ in the interval $I$ such that $u_x(x_0)=0$, then \eqref{F1d} implies $u_{xx}(x_0)>0$. That is, if there exists an interior critical point, then $u$ must be strictly convex in $I$. This allows us to break down the analysis of \eqref{F1d}, restricted to each interval, into several cases: (i) $u_x<0$ in $I$, (ii) $u_x>0$ in $I$, and (iii) $u_x(x_0)=0$ for some $x_0 \in I$. 
In each case, we can solve a linear second order ODE.  The case where $S = \pm 1$ is degenerate: the solution is linear.   The case where $\abs{S} > 1$ is not difficult.  The final case, where $\abs{S} < 1$ corresponds to (iii).   

In this case, $u_x<0$ for $x<x_0$ and $u_x>0$ for $x>x_0$. Then 
\begin{equation}\label{u1dsoln}
u(x) = \abs{x-x_0} + (\exp(-\abs{x-x_0})-1) + u_0	
\end{equation}
For some $u_0\in\R$. Taylor expansion shows that $u(x) = x^2/2 + \bO(x^3)$ for $x$ near $x_0$. 
Finding $x_0$ (or $u_0$) analytically is infeasible. But we can argue that the solution is correct by a continuity argument. 
Given the function $u$ defined by \eqref{u1dsoln},  define the continuous function $S_1(y)\equiv\frac{u(y+W_1)-u(y)}{W_1}$. For small $W_1$ and $y\ll x_0$ we have that $S_1(y)<-1$ (by the earlier discussion). Similarly for $y\gg x_0$ , we have that $S_1(y)>1$. Therefore by the intermediate value theorem there exists $y^*$ such that $S_1(y^*)=S$.  This leads to the correct choice of constants in \eqref{u1dsoln} which achieve the boundary values. 
\end{proof}

\section{Formal analysis}\label{sec:Formal}
In this section we make some formal arguments about quasiconvexity of  strict subsolutions of related PDEs. We present formal arguments justifying the strict and uniform convexity of the level sets of strict subsolutions of the operators. Additional efforts as in \cite{barron2013quasiconvex}, \cite{carlier2012} could  make the arguments rigorous. 

\begin{definition}[Strict and uniform convexity of sets]
Let $S\subset\Rn$ be a domain and suppose $x,y\in\partial S$. We say $S$ is \emph{strictly convex} if $(x,y)$ is in the interior of $S$. Moreover, define $\bar x\equiv(x+y)/2$, then we say $S$ is \emph{uniformly convex} if $\dist(\bar x,\partial S) = \bO(h^2)$ as  $h = |x-y| \to 0$.

\end{definition}

\begin{definition}
\label{Defn:dirQC}
$u:\mathbb{R}^n\rightarrow\mathbb{R}$ is quasiconvex along a direction $v$ if for any $x\in\mathbb{R}^n$, the function $\tilde u(t)\equiv u(x+tv)$ is quasiconvex for $t\in\mathbb{R}$.
\end{definition}

We also appeal to the following proposition, which is elementary from the definition of quasiconvexity.
\begin{proposition}
\label{dirQC}
$u$ is quasiconvex if and only if $u$ is quasiconvex in every direction $v$. 
\end{proposition}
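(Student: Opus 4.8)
The plan is to unwind both implications directly from the defining inequality \eqref{QCcond0}, since nothing beyond the affine parametrization of a line segment is needed; Proposition \ref{dirQC} is genuinely a bookkeeping statement.

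For the forward implication, assume $u$ is quasiconvex. I would fix $x\in\Rn$ and a direction $v$, set $\tilde u(t)\equiv u(x+tv)$, and for arbitrary $s,t\in\R$ and $\lambda\in[0,1]$ use the identity $x+(\lambda s+(1-\lambda)t)v=\lambda(x+sv)+(1-\lambda)(x+tv)$. Applying \eqref{QCcond0} to the two points $x+sv$ and $x+tv$ then gives $\tilde u(\lambda s+(1-\lambda)t)\le\max\{\tilde u(s),\tilde u(t)\}$, i.e.\ $\tilde u$ is quasiconvex on $\R$, which is exactly Definition \ref{Defn:dirQC}.

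For the converse, suppose $u$ is quasiconvex along every direction. Given $x,y\in\Rn$ with $x\neq y$ (the case $x=y$ being trivial), take $v\equiv y-x$ and note that $x+tv$ traces the segment $[x,y]$ as $t$ ranges over $[0,1]$, with $x=x+0\cdot v$ and $y=x+1\cdot v$. Quasiconvexity of $\tilde u(t)\equiv u(x+tv)$ on $\R$, applied at the endpoints $t=0$ and $t=1$, yields $u(x+tv)=\tilde u(t)\le\max\{\tilde u(0),\tilde u(1)\}=\max\{u(x),u(y)\}$ for all $t\in[0,1]$. Rewriting $x+(1-t)(y-x)=tx+(1-t)y$ recovers precisely \eqref{QCcond0}, so $u$ is quasiconvex.

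There is no substantive obstacle; the only point worth a sentence is that restricting the one-dimensional quasiconvex function $\tilde u$ from all of $\R$ to the subinterval $[0,1]$ still gives a quasiconvex function (and in fact we only ever invoke the inequality at the two endpoints), so Definition \ref{Defn:dirQC} feeds directly into \eqref{QCcond0} and back.
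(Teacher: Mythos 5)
Your proof is correct; it is exactly the elementary unwinding of Definition~\ref{Defn:dirQC} and inequality~\eqref{QCcond0} that the paper has in mind when it states the result "is elementary from the definition of quasiconvexity" and omits a written proof. The only cosmetic slip is in the final rewriting: with $v=y-x$ and $\tilde u(t)=u(x+tv)$ you have $x+tv=(1-t)x+ty$, so \eqref{QCcond0} is recovered under the substitution $t\mapsto 1-t$; this changes nothing since $t$ ranges over $[0,1]$.
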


\subsection{Convexity of the level sets of solutions}\label{sec:convexity}

For the remainder of this section, it suffices to consider subsolutions of $\e - \LQC[u]=0$. This is because of the ordering of the operators:
\bq\label{eq:inequalities}
-\LQC[u]\ge-\LQC^{\e^2}[u]\ge \e - F^\e[u]
\eq
The first inequality follows naturally from the restriction of the constraint set. The second inequality is evident from the proof of Proposition \ref{prop:subQC}
\begin{proposition}[Strictly convex level sets of subsolutions]
\label{strictconvexity}
Suppose $u\in C^2(\overline\Omega)$ is a subsolution of $\e-\LQC[u]=0$. Then $u$ has strictly convex level sets.
\end{proposition}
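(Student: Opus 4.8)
The plan is to reduce the claim to a one-dimensional statement about the restriction of $u$ to line segments inside $\Omega$. Since $u\in C^2(\overline\Omega)$, I would first note that the subsolution property is equivalent to a classical pointwise one: testing with $\phi(x)=u(x)+\delta\abs{x-x_0}^2$, the difference $u-\phi$ has a strict local maximum at $x_0$, so Definition~\ref{def:viscosity} gives $\e - L_0(\nabla u(x_0),D^2u(x_0)+2\delta I)\le 0$; letting $\delta\downarrow 0$ and using continuity of $L_0$ in the matrix variable (equivalently, ellipticity) yields $\LQC[u](x_0)\ge\e>0$ for every $x_0\in\Omega$. Spelled out, $\min\{\,v^\intercal D^2u(x_0)v:\abs{v}=1,\ v\cdot\nabla u(x_0)=0\,\}\ge\e$, with the convention that when $\nabla u(x_0)=0$ the constraint is vacuous and the bound becomes $D^2u(x_0)\ge\e I$.

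\textbf{Step 2 (one-dimensional unimodality).} Fix $x\neq y$ with $[x,y]\subset\Omega$, set $v=(y-x)/\abs{y-x}$, $\ell=\abs{y-x}$, and $\tilde u(t)=u(x+tv)$ on $[0,\ell]$, so $\tilde u\in C^2$ with $\tilde u'(t)=\nabla u\cdot v$ and $\tilde u''(t)=v^\intercal D^2u\, v$. At any interior $t_0$ with $\tilde u'(t_0)=0$, the unit vector $v$ is admissible in the minimum defining $\LQC[u]$ at $x+t_0v$ (either $v\cdot\nabla u=0$ there, or $\nabla u=0$), so Step~1 gives $\tilde u''(t_0)\ge\e>0$. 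Hence every critical point of $\tilde u$ is a nondegenerate local minimum; the zero set of $\tilde u'$ in $(0,\ell)$ is therefore discrete, and a short sign argument (between two consecutive zeros, $\tilde u'$ cannot be both positive just after the first and negative just before the second) shows it contains at most one point. Consequently $\tilde u$ is either strictly monotone or strictly decreasing-then-increasing on $[0,\ell]$, and in both cases $\tilde u(t)<\max\{\tilde u(0),\tilde u(\ell)\}$ for every $t\in(0,\ell)$.

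\textbf{Step 3 (strict convexity of level sets).} Fix $\alpha\in\R$ and let $S=\{x\in\Omega: u(x)\le\alpha\}$. By continuity every boundary point of $S$ lying in $\Omega$ satisfies $u=\alpha$, so given $x,y\in\partial S$ with $[x,y]\subset\Omega$ and any $z\in(x,y)$, Step~2 gives $u(z)<\alpha$, i.e. $z$ lies in the open set $\{u<\alpha\}\subseteq\mathrm{int}\,S$. Thus each sublevel set is strictly convex; in particular $u$ is quasiconvex in every direction (Proposition~\ref{dirQC}), in the strict sense. As elsewhere in the paper $\Omega$ is convex, so every segment between points of $\Omega$ lies in $\Omega$ and the hypothesis $[x,y]\subset\Omega$ is automatic.

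\textbf{Main obstacle.} There is no deep difficulty here; the content is the pointwise curvature bound of Step~1 together with the elementary ODE reasoning of Step~2. The points that genuinely require care are: the reduction from the viscosity subsolution property to the classical inequality, which uses $u\in C^2$; the correct reading of $\LQC$ at critical points of $u$, where the minimum is over \emph{all} unit vectors and hence forces $D^2u\ge\e I$; and keeping track of which boundary points of a sublevel set carry the value $\alpha$, which is why the conclusion is naturally phrased for sublevel sets inside the convex domain $\Omega$.
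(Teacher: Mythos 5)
Your proposal is correct and rests on the same core insight as the paper's proof---that the subsolution inequality $\LQC[u]\ge\e$ forces $\tilde u''>0$ at any interior critical point of the one-dimensional restriction $\tilde u(t)=u(x+tv)$---but it organizes the argument in a cleaner and more self-contained way. The paper first invokes an external result (Theorem 2.7 of \cite{barron2013quasiconvex}) to conclude that $u$ is quasiconvex, reducing the claim to ruling out $u(z)=\alpha$ for $z$ strictly between two points of the $\alpha$-level set, and then argues by contradiction: Rolle's theorem produces two interior critical points of $\tilde u$, each a strict local minimum by the curvature bound, contradicting one-dimensional quasiconvexity. You instead derive strict unimodality of $\tilde u$ directly from the curvature bound (at most one critical point, necessarily a nondegenerate minimum), which gives quasiconvexity and strict convexity of the sublevel sets in a single pass, without the citation or the contradiction via Rolle. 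You also spell out the degenerate case $\nabla u(x_0)=0$, where the constraint in $L_0$ is vacuous and the bound becomes $D^2u(x_0)\ge\e I$---a case the paper's proof silently bypasses---and you include the short reduction from the viscosity subsolution inequality to the classical pointwise one (testing with $u+\delta\abs{x-x_0}^2$ and sending $\delta\downarrow0$). The mathematical substance is the same; your version is a bit more elementary and more careful about the edge cases.
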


\begin{proof}

Using the notation given by \eqref{levelset}, fix $\alpha\in\R$ and consider $x,y\in\partial S^\alpha[u]$, that is, $u(x)=u(y)=\alpha$. We would like to show that $tx + (1-t)y$ is strictly in the interior of $S^\alpha[u]$ for $t\in(0,1)$. By \cite[Theorem 2.7]{barron2013quasiconvex} $u$ is quasiconvex, so this amounts to showing that $u(tx+(1-t)y)<\alpha$. 

Consequently, suppose for contradiction that there exists $t_z\in(0,1)$ such that $z\equiv t_z + (1-t_z)y$ satisfies $u(z) = \alpha$. Define $d\equiv x-y$ and consider the function $\tilde u(t) \equiv u(y+t d)$ for $t\in[0,1]$ which is quasiconvex by Proposition \ref{dirQC}. Then $\tilde u(0) =\tilde u(t_z) = \tilde u(1) = \alpha$, so that by Rolle's theorem there exists $0<t_1<t_z$ and $t_z<t_2<1$ such that $\tilde u'(t_1) = \tilde u'(t_2)=0$. Denoting $x_i = x+t_i d$ for $i=1,2$, this implies $\grad u(x_i)^\intercal d=0$. Next, because $u$ is a subsolution, we have that $\e - \LQC[u]\le0$. In particular:
\begin{align*}
0<\e&\le \min_{\abs{v}=1,\grad u(x)^\intercal v= 0} v^\intercal D^2(x_i)v 
\\
&\le \frac{d^\intercal D^2(x_i)d}{\abs{d}^2}
\end{align*}
That is, we have that $0<d^\intercal D^2(x_i)d$, so that $\tilde u''(t_i)>0$. Therefore $\tilde u$ has two distinct strict local minima. This contradicts the quasiconvexity of $\tilde u$.
\end{proof}

\begin{proposition}[Uniformly convex level sets of subsolutions]
Suppose $u\in C^2(\overline\Omega)$ is a subsolution of $\e-\LQC[u]=0$. Then $u$ has uniformly convex level sets.
\end{proposition}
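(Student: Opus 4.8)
The plan is to run the same argument as in Proposition~\ref{strictconvexity}, but now extract a quantitative lower bound on how far the midpoint $\bar x$ sits inside the level set $S^\alpha[u]$. Fix $\alpha\in\R$ and $x,y\in\partial S^\alpha[u]$ with $h=|x-y|$ small, set $d=x-y$, $\bar x=(x+y)/2$, and consider the one dimensional slice $\tilde u(t)\equiv u(y+td)$ for $t\in[0,1]$. From Proposition~\ref{strictconvexity} we already know $\tilde u$ has a unique minimum at some $t^*\in(0,1)$ (strict convexity forbids two local minima, and $\tilde u(0)=\tilde u(1)=\alpha$ forces an interior minimizer), and at that point $\grad u(y+t^*d)^\intercal d=0$. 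I would first show $\tilde u$ is in fact uniformly convex on $[0,1]$: at any $t$ with $\tilde u'(t)=0$ we have $\grad u^\intercal d=0$, hence by the subsolution inequality $\e\le \min_{|v|=1,\,\grad u^\intercal v=0} v^\intercal D^2u\, v \le d^\intercal D^2u\, d/|d|^2$, so $\tilde u''(t)=d^\intercal D^2u(y+td)\,d \ge \e|d|^2 = \e h^2$ there; by continuity of $D^2u$ on the compact $\overline\Omega$ this persists, with a constant like $\e h^2/2$, on a whole neighbourhood of $t^*$ whose size is controlled uniformly in $x,y,\alpha$ (here one uses a uniform modulus of continuity for $D^2u$, and that $h\to0$ so $\grad u$ varies little along the slice).

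Next I would convert this second-derivative bound into the distance estimate. Since $\tilde u'(t^*)=0$ and $\tilde u''\ge c\e h^2$ near $t^*$ (with $c$ absolute), a Taylor expansion gives $\tilde u(t)\le \alpha - \tfrac{c\e h^2}{2}\bigl((t^*-t)^2\wedge \rho^2\bigr)$ for $t\in[0,1]$, where $\rho$ is the (uniform) neighbourhood radius; in particular, evaluating near $t=1/2$ and noting that $\tilde u(0)=\tilde u(1)=\alpha$ pins $t^*$ at distance $\ge$ const from each endpoint when $\rho$ is small, we get $\tilde u(1/2)=u(\bar x)\le \alpha - c'\e h^2$ for a constant $c'>0$ independent of the chosen boundary points. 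Finally, since $u\in C^2(\overline\Omega)$, $\grad u$ is bounded, say $|\grad u|\le L$; moving from $\bar x$ toward $\partial S^\alpha[u]$ along the (unit) direction of steepest ascent of $u$, the value of $u$ increases at rate at most $L$, so reaching the level $\alpha$ requires distance at least $(u(\bar x)$ raised back up to $\alpha)/L \ge c'\e h^2 / L$. Hence $\dist(\bar x,\partial S^\alpha[u]) \ge (c'\e/L)\,h^2 = \bO(h^2)$, which is exactly the claimed uniform convexity.

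The main obstacle is the uniformity: the definition demands $\dist(\bar x,\partial S)=\bO(h^2)$ as $h\to0$ with a single constant, so I must make sure the neighbourhood size $\rho$ on which $\tilde u''\gtrsim \e h^2$, and the resulting lower bound on $\dist(t^*,\{0,1\})$, do not degenerate as the level $\alpha$ or the points $x,y$ vary. This is where compactness of $\overline\Omega$ and uniform continuity of $D^2u$ and $\grad u$ do the work, together with the fact that only $h\to0$ matters (for $h$ bounded away from $0$ the estimate is trivial by continuity and strict convexity). A secondary subtlety is that $\tilde u'' $ is only guaranteed positive \emph{at} critical points of $\tilde u$, not everywhere; propagating positivity to a neighbourhood of $t^*$ must be done carefully, using that along the short slice $\grad u$ stays within $O(h)$ of its value at $y+t^*d$, so the constraint $\grad u^\intercal v=0$ is nearly met by $v=d/|d|$ throughout, and the subsolution inequality still forces $d^\intercal D^2u\,d \gtrsim \e h^2$ up to an $O(h)$ error absorbed for $h$ small. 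As noted in the surrounding text, these formal estimates could be made fully rigorous with the additional techniques of \cite{barron2013quasiconvex} or \cite{carlier2012}.
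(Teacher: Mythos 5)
Your argument takes a genuinely different route from the paper's, and it is more elaborate than it needs to be. The paper's (explicitly ``formal'') proof works directly at the midpoint $\bar x$: it asserts $\grad u(\bar x)^\intercal d = 0$, applies the subsolution inequality to get $\e \le d^\intercal D^2u(\bar x)\, d$, and then closes in two lines with the centered finite-difference identity $d^\intercal D^2u(\bar x)\, d = \frac{u(\bar x+hd)+u(\bar x-hd)-2u(\bar x)}{h^2} + \bO(h^2) = \frac{2\alpha - 2u(\bar x)}{h^2}+\bO(h^2)$, yielding $u(\bar x)\le\alpha-\e h^2/2+\bO(h^4)$. The orthogonality claim is exact for quadratics (if $u=x^\intercal Ax$ and $u(x)=u(y)$ then $(x-y)^\intercal A(x+y)=0$) and holds up to $\bO(h^2)$ in general by Taylor expansion, which is the level of rigor the section aims for. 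You instead work at the exact critical point $t^*$ of the slice $\tilde u$, where the gradient is orthogonal to $d$ with no error; that is a more careful starting point, but it forces you to (i) propagate the bound $\tilde u''\ge\e h^2$ from the single point $t^*$ to a neighbourhood, and (ii) argue that $t^*$ is bounded away from the endpoints, neither of which you fully establish.

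The propagation step is where your argument has a real gap, not just a loose end. The subsolution inequality $\e\le\min\{v^\intercal D^2u\,v : |v|=1,\ \grad u^\intercal v=0\}$ only controls $d^\intercal D^2u\,d$ at points where $\grad u^\intercal d=0$ exactly, i.e.\ at $t^*$. At nearby $t$ you only have $\grad u^\intercal d=\bO(h)$; the constrained minimum says nothing there, and the modulus-of-continuity argument for $D^2u$ does not rescue you because the issue is the constraint set, not the Hessian. To make this work you would have to pass to the relaxed operator $L_{\e}$ (as the paper does via inequality \eqref{eq:inequalities}), which tolerates $|\grad u^\intercal v|\le\e$, but the hypothesis here is only a subsolution bound for $\LQC$. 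The paper sidesteps this entirely by never leaving the single point $\bar x$. Your final step (converting the vertical drop $\alpha-u(\bar x)\gtrsim \e h^2$ into a distance bound via $|\grad u|\le L$) is a genuine addition: the paper stops at the value bound and leaves the conversion implicit, so on that point your writeup is the more complete of the two.
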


\begin{proof}[Formal proof]
Suppose $x,y\in\Omega$ such that $u(x)=u(y)=\alpha$. Let $\bar x$ be the midpoint of line segment joining $x$ and $y$, $h = \abs{\bar x - y}$, and $d = (\bar x-y)/\abs{\bar x -y}$. We see that $\grad u(\bar x)^\intercal d= 0$, so that we have:
\begin{align*}
\e &\le \min_{\abs{v}=1,\grad u(\bar x)^\intercal v=0} v^\intercal D^2u(\bar x) v\\
&\le d^\intercal D^2u(\bar x) d\\
&= \frac{u(\bar x + hd) + u(\bar x -hd) - 2u(\bar x)}{h^2} +\bO(h^2)\\
&= \frac{2\alpha - 2u(\bar x)}{h^2} + \bO(h^2)
\end{align*}
Rearranging for $u(\bar x)$ yields the following inequality:
\[
u(\bar x) \le \alpha -\frac{\e h^2}{2} + \bO(h^4)
\qedhere
\]
\end{proof}

\subsection{Directional quasiconvexity}\label{sec:dirQC}

Proposition (\ref{dirQC}) provides a convenient characterization of quasiconvex functions. In practice, however, we are confined to a grid and thus cannot enforce quasiconvexity along every direction. Therefore we relax the notion of directional quasiconvexity to only a finite set of directions. Doing so results in the notion of \textit{approximate quasiconvexity}. We can quantify the degree to which a function might lack quasiconvexity, expressed in terms of the directional resolution which we define below.
\begin{definition}[Directional resolution]
Let $\Dir=\{d_1,\dots,d_N\}$ be a set of unit vectors. Then we define the \emph{directional resolution} of $\Dir$ as
\bq\label{eq:dir_res}
d\theta \equiv \max_{\abs{w}=1}\min_{d\in \Dir }\cos^{-1}(w^\intercal d)
\eq
\end{definition}
it the largest angle an arbitrary unit vector can make with any vector in $V$. In two dimensions $d\theta$ is simply half the maximum angle between any two direction vectors.

Recalling that a necessary condition for a function $u$ to be quasiconvex is $-\LQC[u]\le0$ in the viscosity sense \cite{barron2013quasiconvex}, we have the following result.

\begin{proposition}[Approximate quasiconvexity]
\label{ApproxQC}
Let $u$ be a $C^2$ function defined on $\Omega\subset\Rn$. Let $\Dir$ be a set of directions, with directional resolution $d\theta\leq\frac{\pi}{4}$. Also, suppose $u$ is quasiconvex along every $d\in \Dir$. Then we have the following approximate quasiconvexity estimate
\[
-\LQC[u]\leq\mathcal{O}(d\theta).
\]
\end{proposition}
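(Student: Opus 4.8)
Since the estimate $-\LQC[u]\le\bO(d\theta)$ is pointwise, the plan is to fix $x_0\in\Omega$, write $p=\grad u(x_0)$ and $M=D^2u(x_0)$, and prove $\LQC[u](x_0)\ge-\bO(d\theta)$, with an implicit constant depending only on $\sup_\Omega\Norm{D^2u}$ and $\sup_\Omega\Norm{D^3u}$. First I would dispose of the degenerate case $p=0$: there the constraint defining $L_0$ is vacuous, so $\LQC[u](x_0)=\lambda_{\min}(M)$, while $x_0$ is an interior critical point of $t\mapsto u(x_0+td)$ for \emph{every} direction $d$, so quasiconvexity along each $d\in\Dir$ forces $d^\intercal Md\ge0$ (a $C^2$ quasiconvex function of one variable cannot have a strict interior local maximum, cf.\ the argument in the proof of Proposition~\ref{strictconvexity}); comparing an arbitrary unit vector $v$ with the nearest element $d\in\Dir$, for which $\angle(v,d)\le d\theta$, then yields $v^\intercal Mv\ge d^\intercal Md-\bO(d\theta)\ge-\bO(d\theta)$.

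Assume now $p\neq0$. Let $v^*$ be a unit vector with $v^*\cdot p=0$ and $(v^*)^\intercal Mv^*=\LQC[u](x_0)=:\lambda$; if $\lambda\ge0$ there is nothing to prove, so assume $\lambda<0$, and the goal is $-\lambda\le\bO(d\theta)$. Using \eqref{eq:dir_res} I would choose $d_0\in\Dir$ with $\cos^{-1}(v^*\cdot d_0)\le d\theta$ (flipping its sign if necessary), so that $d_0=(\cos\beta)v^*+(\sin\beta)w$ with $w\perp v^*$, $|w|=1$, and $0\le\beta\le d\theta$. Two elementary computations, using $v^*\cdot p=0$ and $|\lambda|\le\Norm{M}$, then record that
\[
|d_0\cdot p|=(\sin\beta)\,|w\cdot p|\le(\sin d\theta)\Norm{p}
\qquad\text{and}\qquad
\bigl|d_0^\intercal Md_0-\lambda\bigr|\le(\sin^2\!\beta)\,|\lambda|+3(\sin\beta)\Norm{M},
\]
i.e.\ both quantities are $\bO(d\theta)$.

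The crux is that quasiconvexity along $d_0$ gives no pointwise information at $x_0$ itself, because $d_0$ need not be orthogonal to $p$; so I would first migrate to a genuine critical point of the one-dimensional restriction. Set $\Phi(x)\equiv\grad u(x)\cdot d_0$, so $\Phi(x_0)=d_0\cdot p=\bO(d\theta)$ while $\partial_{d_0}\Phi(x_0)=d_0^\intercal Md_0=\lambda+\bO(d\theta)$ is strictly negative — this is the only place the assumption $\lambda<0$ enters. Moving from $x_0$ along $\pm d_0$ (sign chosen so that $\Phi$ decreases toward $0$) and using $C^1$ continuity of $\grad u$, one reaches a point $x_1$ with $\grad u(x_1)\cdot d_0=0$ and $|x_1-x_0|=\bO(|\Phi(x_0)|/|\lambda|)=\bO(d\theta)$. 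Since $d_0\in\Dir$, the function $t\mapsto u(x_1+td_0)$ is quasiconvex and has a critical point at $t=0$, so the no-strict-local-maximum fact gives $d_0^\intercal D^2u(x_1)d_0\ge0$; Taylor expanding back to $x_0$,
\[
0\le d_0^\intercal D^2u(x_1)d_0=d_0^\intercal Md_0+\bO\bigl(\Norm{D^3u}\,|x_1-x_0|\bigr)=\lambda+\bO(d\theta),
\]
which is the claimed bound at $x_0$.

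The step I expect to be the main obstacle is the construction of $x_1$ and the estimate $|x_1-x_0|=\bO(d\theta)$: when $d_0$ is nearly in the kernel of $M$, the derivative $\partial_{d_0}\Phi(x_0)$ is small and this estimate degrades. The remedy is that in precisely that regime $d_0^\intercal Md_0$, and therefore $\lambda$, is itself already $\bO(d\theta)$, so the conclusion holds for free; the real work is organizing the dichotomy (small versus bounded-below $|d_0^\intercal Md_0|$) so that the implicit constant emerges cleanly in terms of the $C^3$ bounds on $u$ over $\Omega$.
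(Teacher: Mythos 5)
Your route is genuinely different from the paper's, and the difference matters: where the paper applies the midpoint quasiconvexity inequality \emph{directly at} $x_0$, you first migrate along $d_0$ to a critical point $x_1$ of the restriction and then invoke the second-order necessary condition there. The paper's proof (which, like yours, is formal and works with the locally quadratic model $u(x)=x^\intercal Ax+b^\intercal x$) picks the minimizing tangential direction $w$ ($b^\intercal w=0$), writes the nearest $v\in\Dir$ as $\cos\theta\, v = w-\sin\theta\, p$ with $\theta\le d\theta$, and observes
\[
0=u(0)\le\max\bigl(u(\pm\cos\theta\, v)\bigr)=\max\bigl(u(\pm(w-\sin\theta\, p))\bigr)=w^\intercal Aw+\bO(\sin\theta),
\]
where the $\bO(\sin\theta)$ collects the first-order contribution $\sin\theta\,|b^\intercal p|$ (small because $b^\intercal w=0$) and the cross/quadratic terms in $\sin\theta$. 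This never needs a critical point of the one-dimensional restriction, so the issue you flag at the end never arises.

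The obstacle you identify is real and your proposed remedy does not close it. Writing $C_1$ for the constant in $|d_0\cdot p|\le C_1 d\theta$ and $C_3$ for the one controlling the Taylor error in $D^2u$ along the segment $[x_0,x_1]$, your argument delivers $0\le\lambda+\bO(d\theta)+C_3|x_1-x_0|$ with $|x_1-x_0|\lesssim d\theta/|\lambda|$. This yields $|\lambda|\lesssim d\theta+d\theta/|\lambda|$, i.e.\ $|\lambda|^2\lesssim d\theta$, so the conclusion is $-\LQC[u]\le\bO(\sqrt{d\theta})$, not $\bO(d\theta)$. Your dichotomy treats the cases ``$|\lambda|$ small'' and ``$|\lambda|$ bounded below by a fixed constant,'' but the problematic regime is the intermediate one, say $d\theta\ll|\lambda|\ll1$, where neither horn applies and the migration distance $d\theta/|\lambda|$ is neither $\bO(d\theta)$ nor forces $\lambda$ to be $\bO(d\theta)$. (In the idealized pure-quadratic case the whole difficulty evaporates, since $D^2u$ is constant and quasiconvexity along $d_0$ directly forces $d_0^\intercal Ad_0\ge0$ with no migration — which is precisely what the paper's formal argument exploits.) The fix is simply to abandon the critical-point step and use quasiconvexity along $v$ at $x_0$ itself, as the paper does, letting the $\max$ absorb the nonzero first-order term $\grad u(x_0)\cdot v=\bO(\sin\theta)$.
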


\begin{proof}
Without loss of generality, assume $x=0$ and $u(0)=0$, We may also assume $u$ is locally quadratic,  $u(x)=x^\intercal A x+b^\intercal x$, for some real, symmetric matrix $A$, and for $b\in\Rn$. 
Next, suppose $w$ is an arbitrary unit vector satisfying $\nabla u(0)^\intercal w=b^\intercal w=0$. Decompose $w$ as follows:
\[
w = \cos(\theta)v + \sin(\theta) p
\]
where $v\equiv\text{argmin}_i(\cos^{-1}(w^\intercal v_i))$, $\theta = \cos^{-1}(w^\intercal v)$, and $p$ is some unit vector orthogonal to $v$. By hypothesis $\theta\le\pi/4$. Taking $\lambda_n$ to denote the largest eigenvalue of $A$, we observe:
\begin{align*}
0=u(0) &= u\left(\frac{1}{2}(-\cos(\theta) v)+\frac{1}{2}(\cos(\theta) v)\right)\\
&\leq \max(u(-\cos(\theta) v),u(\cos(\theta) v))\quad\text{(by quasiconvexity)}\\
&= \max(u(- w+\sin(\theta)p),u( w-\sin(\theta)p))\\
&=w^\intercal Aw+\sin^2(\theta)p^\intercal Ap-2\sin(\theta)w^\intercal Ap+\sin(\theta)|p^\intercal b|\\
&\leq w^\intercal Aw+ \sin^2(\theta)\lambda_n + 2\sin(\theta)\lambda_n + \sin(\theta)\abs{b}\\
&\leq w^\intercal Aw+ \sin^2(d\theta) \lambda_n + 2\sin(d\theta)\lambda_n+ \sin(d\theta)\abs{b}\\
&=w^\intercal Aw +  \mathcal{O}( d\theta)
\end{align*}
In the above calculations, we used the fact that $\phi(y)\geq u(y)$ and $y^\intercal Ay\leq\lambda_n$ for every $y$, as well as the hypothesis that $b^\intercal w=0$. Taking the minimum over all unit vectors $w$ satisfying $w^\intercal b=0$ yields the desired result.
\end{proof}

\section{Convergent Finite Difference Schemes}\label{sec:numerical_schemes}

In what follows, we provide numerical schemes which discretize the above PDEs. As we will see, the schemes we present here fall into a general class of degenerate elliptic finite difference schemes for which there exists a convergence framework.

Before we begin we introduce some notation, which we will carry throughout the rest of the article. We will assume we are working on the hypercube $[-1,1]^n\subset\Rn$. We write $x=(x_1,\dots,x_n)\in[-1,1]^n$. For simplicity, we discretize the  domain $[-1,1]^n$ with a \emph{uniform} grid, resulting in the following \emph{spatial resolution}:
\[
h\equiv\frac{2}{N-1},
\]
where $N$ is the number of grid points used to discretize $[-1,1]$. Note that we use $h$ here to denote the spatial resolution, and is not to be confused with the $h$ in the formulations of \eqref{eQC}, \eqref{eQCE}.

We use the following notation for our computational domain:
\[
\Omega^h \equiv [-1,1]^n\cap h\mathbb{Z}^n
\]
We define a \emph{grid vector}, $v$, as follows:
\[
\label{gridvector}
v \equiv \frac{x-y}{h},\quad x,y\in\Omega^h
\]

\subsection{Finite difference equations and wide stencils}

Consider a degenerate elliptic PDE, $F[u]=0$, and its corresponding finite difference
equation
\[
F_\rho[u]=0,\quad x\in\Omega^h
\]
where $\rho$ is the discretization parameter. In our case, we take $\rho=(h,d\theta)$ for the schemes presented hereafter. 

In general, $F_\rho[u]:C(\Omega^h)\to C(\Omega^h)$, where $C(\Omega^h)$ is the set of grid functions $u:\Omega^h\to\R$. We assume the following form:
\[
F_\rho[u](x) = F_\rho(u(x),u(x)-u(\cdot))\quad\ \text{for}\ x\in\Omega^h
\]
where $u(\cdot)$ corresponds to the value of $u$ at points in $\Omega^h$. For a given set of grid vectors, $V$, we say $F_\rho[u]$ has \emph{stencil width} $W$ if for any $x\in\Omega^h$, $F_\rho[u](x)$ depends only on the values $u(x + hv)$ where $v\in V$ and $\max_{v\in V}\norm{v}_\infty\le W$. We assume stencils are symmetric about the reference point, $x$, and have at least width one.

\begin{example}
	In two dimensions, the directional resolution can be written as $d\theta=\arctan(1/W)/2$. A stencil of width $W=1$ would correspond to the vectors $V=\{(0,\pm1), (\pm1,0), (\pm1,\pm1), (\mp1,\pm1)\}$.
\end{example}

The Dirichlet boundary conditions given by \eqref{dirichlet} are incorporated by setting
\[
F_\rho[u](x) = u(x)-g(x)\quad\ \text{for}\ x\in\partial\Omega^h
\]

The PDEs we discuss involve a second-order directional derivative and a directional Eikonal operator. We use the following standard discretizations for our schemes:
\begin{align*}
\abs{u_v^h(x)} &\equiv \max\left\{\frac{u(x \pm hv)-u(x)}{h}\right\}\\
u_{vv}^h(x) &\equiv \frac{u(x+hv)+u(x-hv)-2u(x)}{h^2} 
\end{align*}
where $u_v^h$ and $u_{vv}^h$ denote the finite difference approximations for the first and second order directional derivatives in the grid direction $v$. Recall that for any twice continuously differentiable function $u$, the standard Taylor series computation yields the following consistency estimate: 
\begin{align}
\abs{u_v^h(x)} &=  \abs{\nabla u(x)^\intercal v} + \bO(h) \label{uv}\\
u_{vv}^h(x) &= v^\intercal D^2u(x) v + \bO(h^2) \label{uvv} 
\end{align}

\subsection{A finite difference method for the PDE}

We begin by considering the full discretization of $F^\e[u]$, denoted $F^\e_{h,d\theta}[u]$, where $h$ is the spatial resolution and $d\theta$ is the directional resolution of the set of grid vectors $V$. We use the spatial discretizations given by \eqref{uv}, \eqref{uvv}:
\bq\label{eq:Fscheme}
F^\e_{h,d\theta}[u](x)\equiv \min_{v\in V}\left\{\frac{1}{\e}\abs{u_v^h(x)}+u_{vv}^h(x)\right\}\quad \text{for $x\in\Omega^h$}
\eq

We write the full discretization of \eqref{approxOb} as follows:
\bq 
\label{approxObscheme}
\begin{cases}
\max\{u(x)-g(x),\e - F^\e_{h,d\theta}[u](x)\} = 0 & x\in\Omega^h\\
u(x) = g(x) & x\in\partial\Omega^h
\end{cases}
\eq
where we make the following abbreviation:
\[
G^\e_{h,d\theta}[u]\equiv\max\left\{u-g,\e-F^\e_{h,d\theta}[u]\right\}
\]

\subsection{Iterative solution method}

We implement a fixed-point solver to recover the numerical solution of \eqref{approxObscheme}.  The method is equivalent to a Forward Euler time discretization of the parabolic version of \eqref{eQCE}, 
\[
u_{t} + G^\e[u]=0\quad \text{in $\Omega$}
\]
along with $u = g$ on $\partial\Omega$ and $u = u_0$ for $t=0$.

In particular the following iterations are performed until a steady state is reached:
\[
\begin{cases}
u^{n+1} = u^{n} - \delta G^\e_{h,d\theta}[u^{n}], & n\ge0\\
u^{0} = u_0
\end{cases}
\]
where satisfies the CFL condition $\delta\le 1/K^{h,\e}$ (see \cite{ObermanSINUM}) and $K^{h,\e}$ is the Lipschitz constant of the scheme. For our scheme, $K$ is given by 
\[
K^{h,\e} = \frac{1}{\e h} + \frac{1}{h^2}
\]
where the first and second term come from the discretizations of the first and second order directional derivatives, respectively.
\subsection{Using a first-order discretization}\label{sec:first_order}
In higher dimensions we need the second order term in \eqref{eQC} to guarantee quasiconvexity of the solutions. Interestingly, however, we will see numerically all that is required is the discretization of the first order term. This is illustrated in the following computation:

\begin{align*}
-\min_{v\in V}\frac{\abs{u_v^h}}{\e}&=-\min_{v\in V}\left\{\frac {\max\{u(x\pm hv)-u(x)\}} {\e h}\right\}\\
&= -\min_{v\in V}\left\{ \frac{\abs{\grad u(x)^\intercal v}}{\e} + \frac{h}{2\e}v^\intercal D^2u(x)v \right\}+ \bO\left(\frac{h^2}{\e}\right)
\end{align*}
Choosing $\e = h/2$ results in the following:
\[
-\min_{v\in V}\frac{\abs{u_v^h}}{\e} = -\min_{v\in V}\left\{ \frac{\abs{\grad u(x)^\intercal v}}{h/2} + v^\intercal D^2u(x)v \right\}+ \bO(h)
\]
Thus, for $\e=h/2$, we see for fixed $(h,d\theta)$ that the discretization of the first order term approximates $F^\e[u]$.

\section{Convergence of numerical solutions} 
In this section we introduce the notion of degenerate elliptic schemes and show that the solutions of the proposed numerical schemes converge to the solutions of \eqref{approxOb} as the discretization parameters tend to zero. The standard framework used to establish convergence is that of Barles and Souganidis \cite{BS91}, which we state below. In particular, it guarantees that the solutions of any monotone, consistent, and stable scheme converge to the unique viscosity solution of the PDE.

\subsection{Degenerate elliptic schemes}
Consider the Dirichlet problem for the degenerate elliptic PDE, $F[u]=0$, and recall its corresponding finite difference formulation:
\[
\begin{cases}
F_\rho(u(x),u(x)-u(\cdot)) = 0 & x\in\Omega^h\\
u(x)-g(x) = 0 & x\in\partial\Omega^h
\end{cases}
\]
where $\rho$ is the discretization parameter.
\begin{definition}
$F_\rho[u]$ is a \emph{degenerate elliptic scheme} if it is non-decreasing in each of its arguments.
\end{definition}

\begin{remark}
Although the convergence theory is originally stated in terms of \emph{monotone} approximation schemes (schemes with non-negative coefficients), ellipticity is an equivalent formulation for finite difference operators \cite{ObermanSINUM}.
\end{remark}

\begin{definition}
The finite difference operator $F_\rho[u]$ is \emph{consistent} with $F[u]$ if for any smooth function $\phi$ and $x\in\Omega$ we have
\[
F_\rho[\phi](x)\to F[\phi](x)\quad \text{as $\rho\to0$}
\]
\end{definition}

\begin{definition}
The finite difference operator $F_\rho[u]$ is \emph{stable} if there exists $M>0$ independent of $\rho$ such that if $F_\rho[u]=0$ then $\| u \|_\infty\le M$.
\end{definition}

\begin{remark}[Interpolating to the entire domain]
The convergence theory assumes that the approximation scheme and the grid function are defined on all of $\Omega$. Although the finite difference operator acts only on functions defined on $\Omega^h$, we can extend such functions to $\Omega\setminus\Omega^h$ via piecewise interpolation. In particular, performing piecewise \emph{linear} interpolation maintains the ellipticity of the scheme, as well as all other relevant properties. Therefore, we can safely interchange $\Omega^h$ and $\Omega$ in the discussion of convergence without any loss of generality.
\end{remark}
\subsection{Convergence of numerical approximations}
Next we will state the theorem for convergence of approximation schemes, tailored to elliptic finite difference schemes, and demonstrate that the proposed schemes fit in the desired framework. In particular, we will show that the schemes are elliptic, consistent, and have stable solutions.
\begin{proposition}[Convergence of approximation schemes \cite{BS91}]
\label{BSconvergence}
Consider the degenerate elliptic PDE, $F[u]=0$, with Dirichlet boundary conditions for which there exists a strong comparison principle. Let $F_\rho[u]$ be a consistent and elliptic scheme. Furthermore, assume that the solutions of $F_\rho[u]=0$ are bounded independently of $\rho$. Then $u^\rho\to u$ locally uniformly on $\Omega$ as $\rho\to0$. 
\end{proposition}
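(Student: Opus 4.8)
The plan is to run the classical Barles--Souganidis argument. Write $u^\rho$ for solutions of $F_\rho[u]=0$ with the Dirichlet data built into the scheme; by the stability hypothesis $\|u^\rho\|_\infty\le M$ uniformly in $\rho$, and by the interpolation remark we may regard each $u^\rho$ as defined on all of $\Omega$. Define the upper and lower relaxed limits
\[
\overline u(x) \equiv \limsup_{\rho\to0,\ y\to x} u^\rho(y),
\qquad
\underline u(x) \equiv \liminf_{\rho\to0,\ y\to x} u^\rho(y).
\]
These are finite on $\overline\Omega$ by the uniform bound; $\overline u$ is upper semicontinuous, $\underline u$ is lower semicontinuous, and $\underline u\le\overline u$ pointwise. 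The goal is to show $\overline u$ is a viscosity subsolution and $\underline u$ a viscosity supersolution of $F[u]=0$, to compare them using the strong comparison principle, and to conclude they coincide with the unique viscosity solution $u$ --- from which locally uniform convergence is automatic.

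\textbf{Step 1 (relaxed limits are sub/supersolutions).} This is the heart of the argument. Take $\phi\in C^2$ and suppose $\overline u-\phi$ has a strict local maximum at $x_0\in\Omega$; after subtracting a constant from $\phi$ we may assume $\overline u(x_0)=\phi(x_0)$. A standard extraction lemma (using the definition of $\limsup$ together with the strictness of the maximum) produces a sequence $\rho\to0$ and points $x_\rho\to x_0$ such that $u^\rho-\phi$ has a local maximum at $x_\rho$ over the relevant stencil neighbourhood and $u^\rho(x_\rho)\to\overline u(x_0)$, so that $c_\rho\equiv u^\rho(x_\rho)-\phi(x_\rho)\to0$. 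At $x_\rho$ one has $u^\rho(x_\rho)-u^\rho(x_\rho+hv)\ge(\phi+c_\rho)(x_\rho)-(\phi+c_\rho)(x_\rho+hv)$ for the stencil vectors $v$, while the first-argument values agree; since $F_\rho$ is non-decreasing in each argument, this gives $F_\rho[\phi+c_\rho](x_\rho)\le F_\rho[u^\rho](x_\rho)=0$. Letting $\rho\to0$ and invoking consistency yields $F(x_0,\overline u(x_0),\nabla\phi(x_0),D^2\phi(x_0))\le0$, so $\overline u$ is a viscosity subsolution. The supersolution property of $\underline u$ follows by the symmetric argument (minima, $\liminf$, reversed inequalities).

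\textbf{Step 2 (boundary) and Step 3 (comparison).} The Dirichlet condition enters the scheme as $F_\rho[u](x)=u(x)-g(x)$ on $\partial\Omega^h$; combined with the barrier construction of Remark~\ref{rmk:bc} (the quasiconvex $g_0$ on the enlarged domain $\Omega_L$), one obtains $g_0\le u^\rho\le g$ near $\partial\Omega$ uniformly in $\rho$, hence $\overline u=g=\underline u$ on $\partial\Omega$, and in particular $\overline u\le\underline u$ there. Now apply the strong comparison principle with $\overline u$ as USC subsolution and $\underline u$ as LSC supersolution: from $\overline u\le\underline u$ on $\partial\Omega$ we get $\overline u\le\underline u$ on all of $\overline\Omega$. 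Together with the trivial $\underline u\le\overline u$ this forces $\overline u=\underline u=:u$, which is therefore continuous, is both a sub- and a supersolution, and hence is the (unique) viscosity solution of the Dirichlet problem. Finally, equality of the two relaxed limits with a continuous common value is precisely the condition for locally uniform convergence: for any compact $K\subset\Omega$ and any $\delta>0$, a compactness argument using $\overline u(x)=\underline u(x)$ and the semicontinuity of the relaxed limits gives $|u^\rho-u|<\delta$ on $K$ for $\rho$ small.

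\textbf{Main obstacle.} The delicate point is Step 1: making the passage from ``$u^\rho-\phi$ has a local maximum at $x_\rho$'' to ``$F_\rho[\phi+c_\rho](x_\rho)\le0$'' fully rigorous --- correctly invoking monotonicity of $F_\rho$ in \emph{all} of its slots (including the direct $u(x)$ slot and, for the obstacle scheme $\max\{u-g,\ \e-F^\e_{h,d\theta}[u]\}$, that term as well), together with the bookkeeping ensuring the extracted points $x_\rho$ remain interior and the stencil neighbourhoods shrink as $h\to0$. The secondary technical issue is Step 2, guaranteeing that the relaxed limits genuinely attain the boundary data so that the strong comparison principle can be applied on $\overline\Omega$.
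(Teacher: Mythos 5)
The paper does not prove this proposition --- it is quoted directly from Barles--Souganidis \cite{BS91} --- and your sketch is exactly the standard half-relaxed-limits argument of that reference: extraction of approximate maxima at a strict maximum of $\overline u-\phi$, monotonicity of the scheme to get $F_\rho[\phi+c_\rho](x_\rho)\le 0$, consistency to pass to the limit, and the strong comparison principle to force $\overline u=\underline u$. The outline is correct and matches the source the paper relies on.
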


\begin{lemma}[Ellipticity] 
\label{ellipticity}
The scheme given by \eqref{approxObscheme} is elliptic.
\end{lemma}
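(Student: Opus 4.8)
The plan is to show that each of the two ingredients inside the outer maximum defining the scheme — namely $u(x)-g(x)$ and $\e - F^\e_{h,d\theta}[u](x)$ — is a degenerate elliptic scheme in the sense of being non-decreasing in $u(x)$ and non-decreasing in each difference $u(x)-u(x+hv)$, and then to invoke the elementary fact that the maximum of two elliptic schemes is elliptic. Recall from the excerpt that, after piecewise linear interpolation, a scheme $F_\rho[u](x) = F_\rho(u(x), u(x)-u(\cdot))$ is elliptic precisely when it is non-decreasing in all of its arguments.

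First I would handle the obstacle term. The map $u \mapsto u(x)-g(x)$ depends only on $u(x)$, is strictly increasing in $u(x)$, and is trivially non-decreasing (constant) in every difference $u(x)-u(x+hv)$; so it is an elliptic scheme. Next I would treat $\e - F^\e_{h,d\theta}[u](x)$. Writing it out using \eqref{eq:Fscheme} and the definitions of $u_v^h$ and $u_{vv}^h$, for each fixed grid vector $v\in V$ we have
\[
\frac{1}{\e}\abs{u_v^h(x)} + u_{vv}^h(x)
= \frac1\e \max\left\{ \frac{u(x+hv)-u(x)}{h}, \frac{u(x-hv)-u(x)}{h}\right\}
+ \frac{u(x+hv)+u(x-hv)-2u(x)}{h^2}.
\]
Expressing this purely in terms of the differences $a_{\pm} \equiv u(x)-u(x\pm hv)$, the first-order part equals $-\frac{1}{\e h}\min\{a_+, a_-\}$ and the second-order part equals $-\frac{1}{h^2}(a_+ + a_-)$; both are non-increasing in $a_+$ and $a_-$, hence non-decreasing in $-a_\pm = u(x\pm hv)-u(x)$, i.e. non-decreasing in $u(x)-u(x\mp hv)$ after relabelling, and independent of $u(x)$ itself (the $u(x)$ terms cancel). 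Thus each summand $\frac1\e\abs{u_v^h}+u_{vv}^h$ is an elliptic scheme. The minimum over $v\in V$ of elliptic schemes is elliptic, so $F^\e_{h,d\theta}[u]$ is elliptic; negating reverses monotonicity, and adding the constant $\e$ does not change monotonicity, so $\e - F^\e_{h,d\theta}[u]$ is elliptic. Finally, since the pointwise maximum of finitely many non-decreasing functions is non-decreasing in each argument, $G^\e_{h,d\theta}[u] = \max\{u-g,\ \e-F^\e_{h,d\theta}[u]\}$ is an elliptic scheme, and the boundary rows $u(x)-g(x)$ for $x\in\partial\Omega^h$ are elliptic as above; this is exactly the claim.

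The only mild subtlety — and the closest thing to an obstacle — is bookkeeping the sign conventions: the scheme is written in the ``$F[u]=0$'' convention where ellipticity means monotone in $u(x)-u(\cdot)$, whereas $F^\e$ itself is written so that $-F^\e$ is the elliptic operator (as noted right after \eqref{Feps}). One must therefore be careful that it is $\e - F^\e_{h,d\theta}$, not $F^\e_{h,d\theta}$, that appears with the correct sign inside the maximum, and check that the $\abs{\cdot}$ in the first-order term really is monotone in the right direction when rewritten via the $\max$ of two divided differences. Once the signs are pinned down the argument is routine.
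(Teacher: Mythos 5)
Your overall approach is the same as the paper's: show each piece of the outer maximum is an elliptic scheme, using that min/max of elliptic schemes is elliptic and adding constants preserves ellipticity. Your reduction to the differences $a_\pm\equiv u(x)-u(x\pm hv)$ and the resulting identities
\[
\tfrac1\e\abs{u_v^h}+u_{vv}^h \;=\; -\tfrac{1}{\e h}\min\{a_+,a_-\}-\tfrac{1}{h^2}(a_++a_-),
\]
non-increasing in both $a_+$ and $a_-$, are exactly right.

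However, the next two sentences contain a sign error that makes the written argument internally inconsistent. The claim ``non-decreasing in $-a_\pm=u(x\pm hv)-u(x)$, i.e.\ non-decreasing in $u(x)-u(x\mp hv)$ after relabelling'' is false: $-a_+=u(x+hv)-u(x)$ and $a_-=u(x)-u(x-hv)$ are independent quantities, and no relabelling identifies them. What you have actually shown is that $\frac1\e\abs{u_v^h}+u_{vv}^h$ is \emph{non-increasing} in the differences $u(x)-u(\cdot)$, i.e.\ it is \emph{not} elliptic as a scheme; it is its negative that is elliptic. Consequently the statements ``each summand is an elliptic scheme'' and ``$F^\e_{h,d\theta}[u]$ is elliptic'' are both wrong under the paper's convention. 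Worse, your chain ``$F^\e_{h,d\theta}$ is elliptic; negating reverses monotonicity; adding $\e$ does not change it; so $\e-F^\e_{h,d\theta}$ is elliptic'' is self-contradictory: if $F^\e_{h,d\theta}$ were elliptic, negation would make $-F^\e_{h,d\theta}$ (and hence $\e-F^\e_{h,d\theta}$) anti-elliptic, the opposite of what you conclude. Two sign errors happen to cancel, so your final conclusion is correct, but the argument as written does not establish it. The clean version is short: each $-(\frac1\e\abs{u_v^h}+u_{vv}^h)$ is non-decreasing in $a_\pm$ (and independent of $u(x)$), hence elliptic; therefore $\e-F^\e_{h,d\theta}[u]=\e+\max_{v\in V}\bigl(-\tfrac1\e\abs{u_v^h}-u_{vv}^h\bigr)$ is elliptic (max of elliptic schemes plus a constant); the obstacle term $u(x)-g(x)$ is elliptic; and $G^\e_{h,d\theta}[u]$ is the max of the two, hence elliptic. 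Your closing paragraph flags the sign subtlety accurately, but the body of the proof does not implement the fix you describe there.
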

\begin{proof}
The negative of each of the spatial discretizations above are elliptic for fixed $v$, so taking the minimum over all $v$ maintains the ellipticity. Therefore, the discretization of $F^\e_{h,d\theta}[u]$ is elliptic. Moreover, the obstacle term is trivially elliptic. Hence $G^\e_{h,d\theta}[u]$, being the maximum of two elliptic schemes, is also elliptic.
\end{proof}

\begin{lemma}[Consistency]
\label{consistency}
Given a smooth function $u$ and a set of grid vectors $V$ with directional resolution $d\theta\le\pi/4$, we have the following estimate:
\bq\label{consistency_estimate}
G^\e_{h,d\theta}[u] - G^\e[u] = \bO(h+d\theta)
\eq
In other words, the scheme \eqref{approxObscheme} is consistent with \eqref{approxOb}.
\end{lemma}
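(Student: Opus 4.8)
The plan is to split the error $G^\e_{h,d\theta}[u] - G^\e[u]$ into the contribution from the obstacle term and the contribution from the operator term, and to control each using the basic property that $\max\{a,b\}-\max\{a,c\}$ is bounded in absolute value by $|b-c|$. Since the obstacle term $u-g$ is identical in both the continuous and discrete equations, the only discrepancy comes from $\e - F^\e_{h,d\theta}[u]$ versus $\e - F^\e[u]$, so it suffices to prove
\[
F^\e_{h,d\theta}[u](x) - F^\e[u](x) = \bO(h + d\theta).
\]
I would write $F^\e_{h,d\theta}[u](x) = \min_{v\in V}\{\tfrac1\e|u_v^h(x)| + u_{vv}^h(x)\}$ and $F^\e[u](x) = \min_{|v|=1}\{\tfrac1\e|\nabla u(x)^\intercal v| + v^\intercal D^2u(x)v\}$, and bound the difference of the two minima by handling the two inequalities (upper and lower bound) separately.

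For the estimate $F^\e_{h,d\theta}[u](x) \le F^\e[u](x) + \bO(h+d\theta)$: let $v^*$ with $|v^*|=1$ be a minimizer of the continuous operator, and pick $v\in V$ (a unit grid direction) with $\cos^{-1}(v^\intercal v^*)\le d\theta$, so $|v - v^*| = \bO(d\theta)$. Using the Taylor consistency estimates \eqref{uv} and \eqref{uvv}, together with the Lipschitz dependence of $p\mapsto|\nabla u(x)^\intercal p|$ and $p\mapsto p^\intercal D^2u(x)p$ on $p$ near the unit sphere (with constants controlled by $\|\nabla u\|_\infty$ and $\|D^2u\|_\infty$ on $\Omega$), I get $\tfrac1\e|u_v^h(x)| + u_{vv}^h(x) \le \tfrac1\e|\nabla u(x)^\intercal v^*| + (v^*)^\intercal D^2u(x) v^* + \bO(h + d\theta)$, and since $F^\e_{h,d\theta}[u](x)$ is the min over $V$ it is at most this quantity. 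The reverse inequality $F^\e[u](x) \le F^\e_{h,d\theta}[u](x) + \bO(h+d\theta)$ is even more direct: for the grid direction $v\in V$ achieving the discrete minimum, each discrete difference quotient agrees with its continuous counterpart up to $\bO(h)$ by \eqref{uv}–\eqref{uvv}, and the continuous objective evaluated at that particular (unit) $v$ is $\ge F^\e[u](x)$. Combining the two bounds gives the claim, hence \eqref{consistency_estimate}, and consistency in the sense of the preceding definition follows by letting $\rho = (h,d\theta)\to 0$.

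The main obstacle, such as it is, is bookkeeping the $1/\e$ factor: the first-order term carries a coefficient $1/\e$, so the $\bO(h)$ from \eqref{uv} becomes $\bO(h/\e)$, and likewise the $d\theta$ perturbation of the direction enters the Eikonal term with a $1/\e$. For fixed $\e>0$ this is harmless — the constant in $\bO(h+d\theta)$ simply depends on $\e$ (and on $\|u\|_{C^2(\overline\Omega)}$) — which is all that is needed for Proposition \ref{BSconvergence}, where $\e$ is held fixed while $\rho\to0$. I would state explicitly that the implied constant depends on $\e$ and on the $C^2$ norm of $u$, so there is no circularity. One should also note that the hypothesis $d\theta\le\pi/4$ is what guarantees that every unit vector has a grid direction within angle $d\theta$, so the selection of $v$ above is legitimate; this is the same bound used in Proposition \ref{ApproxQC}. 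No further subtlety arises, so the proof is essentially the two one-sided comparisons of minima described above.
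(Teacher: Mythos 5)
Your proof is correct and follows essentially the same route as the paper's: reduce via the Lipschitz property of $\max$ to the estimate $F^\e_{h,d\theta}[u]-F^\e[u]=\bO(h+d\theta)$, then bound this by comparing the continuous minimizer to its nearest grid direction (using $d\theta\le\pi/4$) together with the Taylor estimates \eqref{uv}--\eqref{uvv}. The only difference is presentational — you spell out the two one-sided inequalities and note explicitly that the implied constant depends on $\e$ and $\|u\|_{C^2}$, whereas the paper packages the angular error as an intermediate comparison and defers the calculation to Proposition~\ref{ApproxQC}.
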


\begin{proof}
It suffices to show that $F^\e_{h,d\theta}[u] - F^\e[u] = \bO(h+d\theta)$. Fix $x\in\overline\Omega$ and let $u$ be a smooth function on $\overline{\Omega}$. Define
\[
w\equiv\argmin_{\abs{v}=1}\left\{ \frac{1}{\e}\abs{\nabla u(x)^\intercal v} + v^\intercal D^2u(x) v\right\}
\]
Let $v\in V$ and make the following decomposition:
\[
v = \cos(\theta)w + \sin(\theta)p
\]
for some unit vector $p$ orthogonal to $w$. Performing a similar calculation to the proof in Proposition \ref{ApproxQC}, we obtain the following consistency estimate from the directional resolution:
\[
F^\e[u] \le\min_{v\in V}\left\{\frac{1}{\e}\abs{\nabla u^\intercal v} + v^\intercal D^2u v\right\} \le F^\e[u] +\bO(d\theta)
\]
Finally, we conclude \eqref{consistency_estimate} by recalling the standard Taylor series consistency estimate from equations \eqref{uv}, \eqref{uvv}.
\end{proof}

%
%

Next, we establish stability of our numerical solutions by applying a discrete comparison principle for strict sub- and supersolutions. In particular, we use the following result.

\begin{proposition}[Discrete comparison principle for strict subsolutions] \label{discreteCP}
Let $F_\rho[u]$ be a degenerate elliptic scheme defined on $\Omega^h$. Then for any grid functions $u,v$ we have: 
\[
F_\rho[u]<F_\rho[v]\implies u\le v\quad\text{in $\Omega^h$}
\]
\end{proposition}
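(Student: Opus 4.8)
The plan is to argue by contradiction, exploiting the discrete structure of the scheme together with monotonicity (degenerate ellipticity of $F_\rho$). Suppose $F_\rho[u]<F_\rho[v]$ everywhere on $\Omega^h$ but the conclusion fails, so the grid function $w\equiv u-v$ attains a positive maximum $m>0$ at some node $x_0\in\Omega^h$. Because the Dirichlet data are enforced by the scheme (setting $F_\rho[u](x)=u(x)-g(x)$ on $\partial\Omega^h$), if $x_0\in\partial\Omega^h$ then $F_\rho[u](x_0)-F_\rho[v](x_0)=u(x_0)-v(x_0)=m>0$, directly contradicting $F_\rho[u]<F_\rho[v]$. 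Hence $x_0$ is an interior node, and at $x_0$ we have $u(x_0)=v(x_0)+m$ and $u(x_0)-u(y)\ge v(x_0)-v(y)+ (m - m) \ge v(x_0)-v(y)$ for every other grid node $y$, since $u(y)-v(y)\le m$. In other words, writing the scheme in the form $F_\rho[u](x_0)=F_\rho\bigl(u(x_0),\,u(x_0)-u(\cdot)\bigr)$, the zeroth argument increases by $m$ and every difference argument $u(x_0)-u(y)$ is at least as large as $v(x_0)-v(y)$.

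The key step is then to invoke degenerate ellipticity: $F_\rho$ is non-decreasing in each of its arguments, so replacing the arguments coming from $v$ by the (pointwise larger or equal) arguments coming from $u$ can only increase the value. Concretely,
\[
F_\rho[v](x_0)=F_\rho\bigl(v(x_0),\,v(x_0)-v(\cdot)\bigr)\le F_\rho\bigl(u(x_0),\,u(x_0)-u(\cdot)\bigr)=F_\rho[u](x_0),
\]
where the inequality uses monotonicity in the zeroth slot (since $u(x_0)\ge v(x_0)$) and in each difference slot (since $u(x_0)-u(y)\ge v(x_0)-v(y)$ for all $y$). This gives $F_\rho[u](x_0)\ge F_\rho[v](x_0)$, contradicting the strict hypothesis $F_\rho[u]<F_\rho[v]$ at $x_0$. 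Therefore no interior positive maximum can occur, and combined with the boundary case we conclude $u\le v$ on all of $\Omega^h$.

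I would present this cleanly by first isolating the node where $u-v$ is maximized, then splitting into the boundary and interior cases, and finally applying the monotonicity inequality in the interior case. The only subtle point — and the part deserving a careful sentence — is checking that at the maximizing node the \emph{difference} arguments of the scheme do indeed dominate: this is where one uses that $x_0$ is a \emph{global} maximum of $u-v$ over $\Omega^h$, so that $u(x_0)-u(y)-(v(x_0)-v(y))=(u-v)(x_0)-(u-v)(y)\ge 0$ for every neighbouring node $y$ appearing in the stencil. Everything else is a direct appeal to the definition of a degenerate elliptic scheme, so there is no real analytic obstacle here; the argument is essentially the discrete maximum principle packaged for monotone finite-difference operators.
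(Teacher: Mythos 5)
Your proof is correct and follows essentially the same route as the paper: locate the node $x_0$ where $u-v$ attains its positive maximum, note that the difference arguments $u(x_0)-u(\cdot)$ dominate $v(x_0)-v(\cdot)$ there, and invoke monotonicity (degenerate ellipticity) to get $F_\rho[u](x_0)\ge F_\rho[v](x_0)$, contradicting the strict hypothesis. Your version is a bit more careful than the paper's about the boundary nodes and about the monotonicity in the zeroth slot $u(x_0)\ge v(x_0)$, but this is the same argument — the discrete maximum principle for monotone schemes.
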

\begin{proof}
Suppose for contradiction that
\[
\max_{x\in\Omega^h}\{u(x) - v(x)\} = u(x_0) - v(x_0) > 0
\]
Then
\[
u(x_0) - u(x) > v(x_0) - v(x)\quad\text{for every $x\in\Omega^h$}
\]
and so by ellipticity (Lemma \ref{ellipticity})
\[
F_\rho[u](x_0)\ge F_\rho[v](x_0)
\]
which is the desired contradiction.
\end{proof}

\begin{lemma}[Stability]
\label{stability}
The numerical solutions of \eqref{approxObscheme} are stable.
\end{lemma}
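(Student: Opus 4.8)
The plan is to establish stability, i.e., a bound $\|u^\rho\|_\infty \le M$ independent of $\rho = (h, d\theta)$, by sandwiching the numerical solution between a discrete subsolution and a discrete supersolution and then invoking the discrete comparison principle for strict subsolutions (Proposition~\ref{discreteCP}). The natural candidates are the obstacle $g$ itself from above, and a suitable quasiconvex function from below that matches the boundary data; in view of Remark~\ref{rmk:bc}, the barrier $g_0$ (restricted to $\Omega$) is the right lower object since it is quasiconvex, continuous, and agrees with $g$ on $\partial\Omega$.

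First I would verify the upper bound. Since $g$ satisfies $u(x) - g(x) = 0$ trivially, one checks that $G^\e_{h,d\theta}[g] \ge \max\{g - g, \e - F^\e_{h,d\theta}[g]\} \ge 0$ pointwise — actually one wants $G^\e_{h,d\theta}[g] \ge 0$ so $g$ is a discrete supersolution — and then apply (a non-strict version of) the discrete comparison principle, or perturb slightly to get strictness, to conclude $u^\rho \le g \le \max_{\overline\Omega} g$ on $\Omega^h$. For the lower bound, I would use $g_0$: it is quasiconvex, hence $-\LQC[g_0] \le 0$ in the viscosity sense by \cite{barron2013quasiconvex}, and I would argue that its discretization satisfies $F^\e_{h,d\theta}[g_0] \ge $ something controlled, so that $\e - F^\e_{h,d\theta}[g_0] \le 0$ up to consistency error, while $g_0 - g \le 0$ on $\Omega$; this makes $g_0$ (a small perturbation of it) a strict discrete subsolution of the obstacle scheme, giving $g_0 \le u^\rho$ and hence $u^\rho \ge \min_{\overline\Omega} g_0 > -\infty$ uniformly in $\rho$. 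Combining, $\min_{\overline\Omega} g_0 \le u^\rho \le \max_{\overline\Omega} g$, which is the required $\rho$-independent bound.

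The main obstacle is the lower barrier: a general quasiconvex $g_0$ need not be $C^2$, so the clean Taylor consistency estimates \eqref{uv}, \eqref{uvv} do not directly apply, and moreover quasiconvexity only gives $-\LQC[g_0]\le 0$, not the strict inequality $\e - F^\e[g_0] < 0$ needed to run Proposition~\ref{discreteCP}. I would handle this by replacing $g_0$ with a mollified, strictly-convexified variant — e.g. $g_0^\delta(x) = (g_0 * \eta_\delta)(x) - \gamma|x|^2$ for small $\gamma, \delta$ tuned to $\e$ and the grid — which is smooth, strictly quasiconvex (so $\e - F^\e[g_0^\delta] < 0$ with room to spare), still below $g$ on $\Omega$ for the parameters chosen, and still equal to (or below) $g$ on $\partial\Omega^h$; then the consistency lemma (Lemma~\ref{consistency}) applies to $g_0^\delta$ and for $h, d\theta$ small enough the strict discrete subsolution inequality $G^\e_{h,d\theta}[g_0^\delta] < 0$ holds on $\Omega^h$. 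Alternatively, and more simply, one can avoid mollification by taking the explicit lower barrier to be an affine function below $g$ on all of $\overline\Omega$ together with the boundary values: an affine function $\ell$ has $F^\e_{h,d\theta}[\ell] = \frac{1}{\e}|\nabla\ell\cdot v^*| \ge 0$, but we need $\e - F^\e_{h,d\theta}[\ell] < 0$, which fails when $\nabla \ell$ is nearly orthogonal to all grid directions, so the affine shortcut does not quite work and the mollified strictly-convex barrier is the robust choice. Finally, I would note that a cruder but fully rigorous route is to observe the scheme is a fixed point of the monotone, non-expansive map $u \mapsto u - \delta G^\e_{h,d\theta}[u]$ started from $u_0 = g$ on $\Omega^h$ with $u_0 = g$ on $\partial\Omega^h$; monotonicity of this iteration keeps the iterates trapped between $g_0$ and $g$ at every step, so the limit inherits the bound — this is the argument I expect the authors to use, and it sidesteps needing strictness of the subsolution inequality.
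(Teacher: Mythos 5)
Your proposal has the same overall skeleton as the paper's proof --- sandwich the numerical solution between a lower and an upper discrete barrier and invoke the discrete comparison principle (Proposition~\ref{discreteCP}) --- but the two diverge in the choice of lower barrier, and that difference matters. The paper does not use $g_0$; it defines the explicit affine function $g_1(x) = C + 2\e^2\sum_{i=1}^n x_i$ with $C$ chosen so that $g_1 < g$ on $\overline\Omega$, observes $(g_1)_{vv}^h \equiv 0$, computes $F^\e_{h,d\theta}[g_1] = 2\e\min_{v\in V}\lvert\sum_i v_i\rvert$, and asserts this minimum equals $1$ to get $G^\e_{h,d\theta}[g_1] = \max\{g_1-g,\,-\e\} < 0$. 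The upper barrier is simply the constant $g_2 = \max_{\overline\Omega} g$, which gives $G^\e_{h,d\theta}[g_2] \ge 0$ trivially. So the paper's route is the ``affine shortcut'' you considered and then discarded.

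Your reason for discarding it is essentially correct, and it actually exposes a gap in the paper's computation (with a small fix to your phrasing: the problem is not that $\nabla\ell$ is nearly orthogonal to \emph{all} grid directions, but that it is orthogonal to \emph{some} grid direction --- since $F^\e_{h,d\theta}$ is a minimum over $v\in V$, one bad direction is enough). For the paper's $g_1$, the gradient is proportional to $(1,\dots,1)$, and any stencil of width at least one contains a diagonal vector such as $(1,-1,0,\dots,0)$, for which $\sum_i v_i = 0$; thus $\min_{v\in V}\lvert\sum_i v_i\rvert = 0$, not $1$, and $\e - F^\e_{h,d\theta}[g_1] = \e > 0$, so $g_1$ is \emph{not} a strict discrete subsolution. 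The asserted equality $\e - 2\e\min_v\lvert\sum_i v_i\rvert = -\e$ in the paper's display fails. Your mollified, uniformly-convexified barrier $g_0^\delta$ is the kind of repair that would make the argument go through, because a smooth uniformly convex function has $F^\e[g_0^\delta]$ bounded below by a positive constant uniformly over directions, and this survives discretization by your Lemma~\ref{consistency}. The trade-off is that your route leans on Remark~\ref{rmk:bc} (existence of a quasiconvex $g_0\le g$), which is an additional hypothesis, and on mollification/perturbation bookkeeping; a cleaner drop-in fix staying within the paper's spirit is to replace the affine barrier with a smooth uniformly convex one such as $g_1(x) = C + \gamma\lvert x\rvert^2$ with $\gamma$ small enough that $g_1 < g$ and large enough that $(g_1)^h_{vv} = 2\gamma\lvert v\rvert^2 > \e$ for all $v\in V$, which makes $\e - F^\e_{h,d\theta}[g_1] < 0$ with no orthogonality issue and with bounds independent of $(h, d\theta)$. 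Your fallback ``trapping the monotone iteration between $g_0$ and $g$'' is not what the paper does and, while plausible, requires a separate argument that the iteration converges and preserves the trap, so I would not present it as the primary proof.
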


\begin{proof}

This a direct application of Proposition \ref{discreteCP}. Indeed, suppose $u$ is a numerical solution such that $G^\e_{h,d\theta}[u]=0$. Define $g_1(x)\equiv C+2\e^2 \sum_{i=1}^nx_i$, where $C\in\R$ is chosen so that $g_1(x)<g(x)$ for every $x\in\overline\Omega$. Moreover one can check that $(g_1)_{vv}^h(x)=0$. Then we observe:
\begin{align*}
G^\e_{h,d\theta}[g_1](x) &= \max\left\{g_1(x)-g(x),\e-\min_{v\in V}\left\{ \frac{\max\{g_1(x\pm hv)\}-g_1(x)}{\e h} \right\}\right\}\\
&= \max\left\{g_1(x)-g(x),\e-2\e\min_{v\in V}\left\vert\sum_{i=1}^nv_i\right\vert\right\}\\
&= \max\left\{g_1(x)-g(x),-\e\right\}
\end{align*}
where the last equality follows from the general assumption that the stencil width is at least one with directional resolution $d\theta\le\pi/4$.
Next, define $g_2(x)\equiv\max_{x\in\overline\Omega}g(x)$. Then in summary we have that $G^\e_{h,d\theta}[g_1]<0$ and $G^\e_{h,d\theta}[g_2]\ge0$. Therefore by Proposition \ref{discreteCP},
\[
\min_{x\in\overline\Omega} g_1(x)\le u(x)\le g_2 \quad\text{for}\ x\in\Omega^h
\]
Moreover, this bound is independent of $h$ and $d\theta$.

\end{proof}

\begin{proposition}\label{prop:convergence}
The solutions of \eqref{approxObscheme} converge locally uniformly on $\Omega$ to the solutions of \eqref{approxOb} as $h,d\theta\to0$.
\end{proposition}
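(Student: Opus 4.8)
The plan is to verify the hypotheses of the Barles--Souganidis convergence theorem, Proposition~\ref{BSconvergence}, and then invoke it directly. Four things must be checked: (i) the limiting PDE \eqref{approxOb} admits a strong comparison principle; (ii) the scheme \eqref{approxObscheme} is a degenerate elliptic (monotone) scheme; (iii) the scheme is consistent with \eqref{approxOb}; and (iv) the numerical solutions are bounded uniformly in $\rho = (h,d\theta)$. Items (ii)--(iv) are exactly Lemmas~\ref{ellipticity}, \ref{consistency}, and \ref{stability}, so the substance of the argument is assembling these together with (i) and checking that the refinement $\rho\to0$ is admissible.

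For the comparison principle, I would apply Corollary~\ref{e-CP} with the role of the function $h$ in \eqref{eQCE} played by the positive constant $\e$, so that assumption \eqref{h_assumption} holds trivially. Corollary~\ref{e-CP} gives comparison between any USC subsolution and any LSC supersolution that are ordered on $\partial\Omega$. To upgrade this to the \emph{strong} comparison principle required by Proposition~\ref{BSconvergence} --- comparison for the semicontinuous relaxed envelopes, valid up to the boundary --- I would invoke the structural hypothesis in Remark~\ref{rmk:bc}: the enlarged convex domain $\Omega_L$ and quasiconvex $g_0$ pinch $g_0 \le u \le g$ and force $u = g$ continuously on $\partial\Omega$. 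The key simplification, already noted in the paper, is that for $\e>0$ the map $(p,M)\mapsto F^\e(p,M)$ is continuous, so one avoids the discontinuity issues present for $\LQC$, and the perturbation-to-strict-subsolution argument of \cite{barron2013uniqueness} used in Proposition~\ref{comparison} and Corollary~\ref{e-CP} applies cleanly. One should also record that solutions of the finite system \eqref{approxObscheme} exist: since $G^\e_{h,d\theta}$ is a proper elliptic scheme, a solution is produced by the monotone Euler iteration under the CFL condition $\delta\le 1/K^{h,\e}$, or equivalently by Perron's method on the finite grid.

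With (i)--(iv) in hand, Proposition~\ref{BSconvergence} applies: ellipticity holds by Lemma~\ref{ellipticity}; consistency holds for any family of direction sets with $d\theta\le\pi/4$ by Lemma~\ref{consistency}, with error $\bO(h+d\theta)$; and the bound $\min_{\overline\Omega} g_1 \le u^\rho \le \max_{\overline\Omega} g$ of Lemma~\ref{stability} is independent of $\rho$. Extending each grid function to $\Omega$ by piecewise linear interpolation (which preserves ellipticity, hence monotonicity, and all other relevant properties), we conclude that $u^\rho$ converges locally uniformly on $\Omega$ to the unique viscosity solution of \eqref{approxOb}.

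The main obstacle is item (i): the convergence framework demands a \emph{strong} comparison principle, valid for discontinuous envelopes and up to the boundary, whereas Corollary~\ref{e-CP} is stated for sub/supersolutions already ordered on $\partial\Omega$. Bridging this gap is precisely the purpose of the boundary-regularity hypothesis of Remark~\ref{rmk:bc}, and one must take care that it delivers continuity of the numerical solutions up to $\partial\Omega$ as well. A secondary subtlety worth noting is that $d\theta\to0$ forces the stencil width $W\to\infty$ as $h\to0$, so the limit $\rho=(h,d\theta)\to0$ couples the two parameters; for each fixed $\rho$ the scheme remains well-posed (the Lipschitz constant $K^{h,\e}=\frac{1}{\e h}+\frac{1}{h^2}$ is finite), and the consistency error still vanishes along the chosen refinement path, so no obstruction arises.
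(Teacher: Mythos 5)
Your proof follows essentially the same route as the paper: check ellipticity, consistency, and stability via Lemmas~\ref{ellipticity}, \ref{consistency}, \ref{stability}, supply a comparison principle, and invoke the Barles--Souganidis framework of Proposition~\ref{BSconvergence}. You are, however, more careful than the paper at one point that deserves credit: Proposition~\ref{BSconvergence} demands a \emph{strong} comparison principle, while the paper's proof simply cites the weak comparison of Corollary~\ref{e-CP} and moves on; you correctly note that the structural hypothesis of Remark~\ref{rmk:bc} is what bridges this gap by forcing continuity up to $\partial\Omega$, and you flag that this must be stated explicitly. Your additional remarks --- existence of discrete solutions via the monotone Euler iteration, piecewise linear interpolation to extend grid functions, and the coupling of $h$ and $d\theta$ as the stencil width grows --- are all consistent with the paper's surrounding discussion and sharpen the argument rather than diverge from it.
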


\begin{proof}
By Lemmas (\ref{ellipticity}, \ref{consistency}, \ref{stability}) we see that \eqref{approxObscheme} is a consistent and elliptic scheme which has stable solutions. Moreover, a weak comparison principle holds by Proposition \ref{e-CP}. Therefore by Proposition \ref{BSconvergence}, the numerical solutions converges uniformly on compact subsets of $\Omega$.
\end{proof}

\section{Numerical results}
In this section we present numerical results. 
The tolerance for the fixed-point iterations was taken to be $10^{-6}$ and unless otherwise stated we set $\e=h/2$, where $h$ is the spatial resolution of the grid.

Technical conditions on $g$ given by Remark \ref{rmk:bc} are needed to ensure that the boundary conditions are held in the strong sense. In practice we violated this condition, resulting in solutions which were discontinuous at the boundary. 

Two natural choices for initialization of the solution are: (i) the obstacle function, $g(x)$, (ii) a quasiconvex function below $g$.  We found that the first choice leads to very slow convergence.  In particular, the parabolic equation takes time  $\bO(1/\e)$ to converge.  See  Example~\ref{ex_initabove} below.  On the other hand, the second choice results in faster convergence: the simplest choice is simply the constant function with value the minimum of $g$. After one step of the iteration the boundary values are attained. Moreover, starting with this choice allows us to use the iterative method with $\e = 0$  to find the quasiconvex envelope.

\begin{remark}
It is an open question whether solve the $\e=0$ time-dependent PDE with quasiconvex initial data below $g$ will converge.  To apply our convergence results, we take $\e = h/2$. 
\end{remark}

\subsection{Results in one dimension}
We present examples demonstrating the convergence of approximate quasiconvex envelopes to the true quasiconvex envelope as $\e\to0^+$.  We also compare the iteration count when starting from below and at the obstacle.

\begin{example}[Convergence as $\e\to0$]
\label{ex_epsconvergence}
We consider the convergence of numerical solutions of \eqref{approxObscheme} as $\e\to 0$ with the following obstacle function:
\[
g(x)=\min\{\abs{x-0.5},\abs{x+0.5}-0.3\}
\]
The results are displayed in \autoref{prelim_example}. Indeed, as expected we see convergence to the true quasiconvex envelope, from below, as $\e\to0$. 
\end{example}

\begin{example}[Visualization of iterations]
\label{ex_initabove}
Next, we consider the following obstacle:
\[
g(x)= -x^2+1
\]
whose quasiconvex envelope is simply $g_2^{QCE}\equiv0$. We demonstrate the evolution of the iterations when the initial data is taken to be the obstacle.  In this case, the solution corresponded closely to $u(x,t) = \min(g(x), c - \e t)$ where $c = \max g$.   This illustrates the slow speed of convergence, and the fact that the equation degenerates to a trivial operator as  $\e \to 0$. 
Results are displayed in \autoref{prelim_example}.
\end{example}
\begin{figure}[t]
\centering
\includegraphics[width=.49\textwidth]{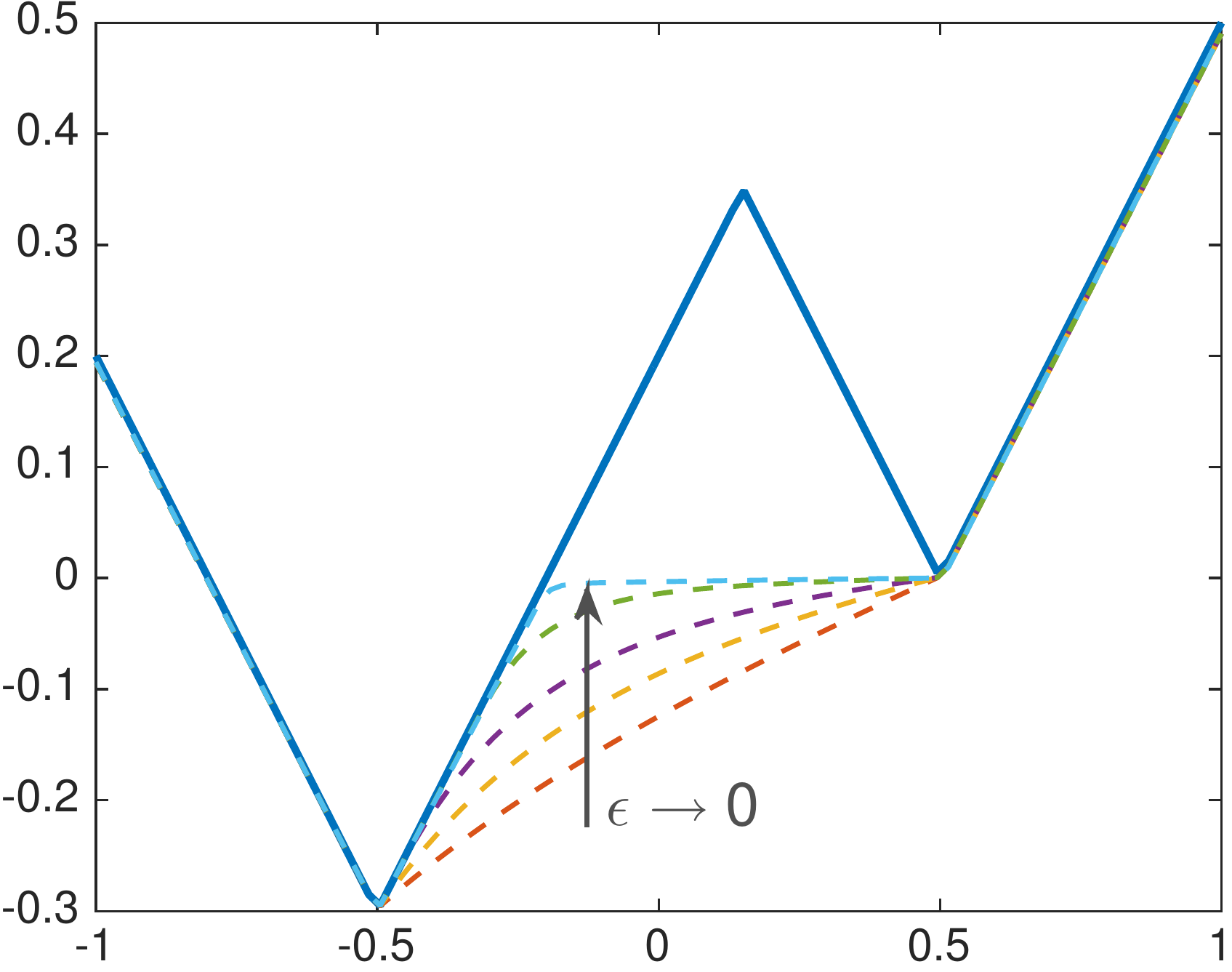}
\includegraphics[width=.49\textwidth]{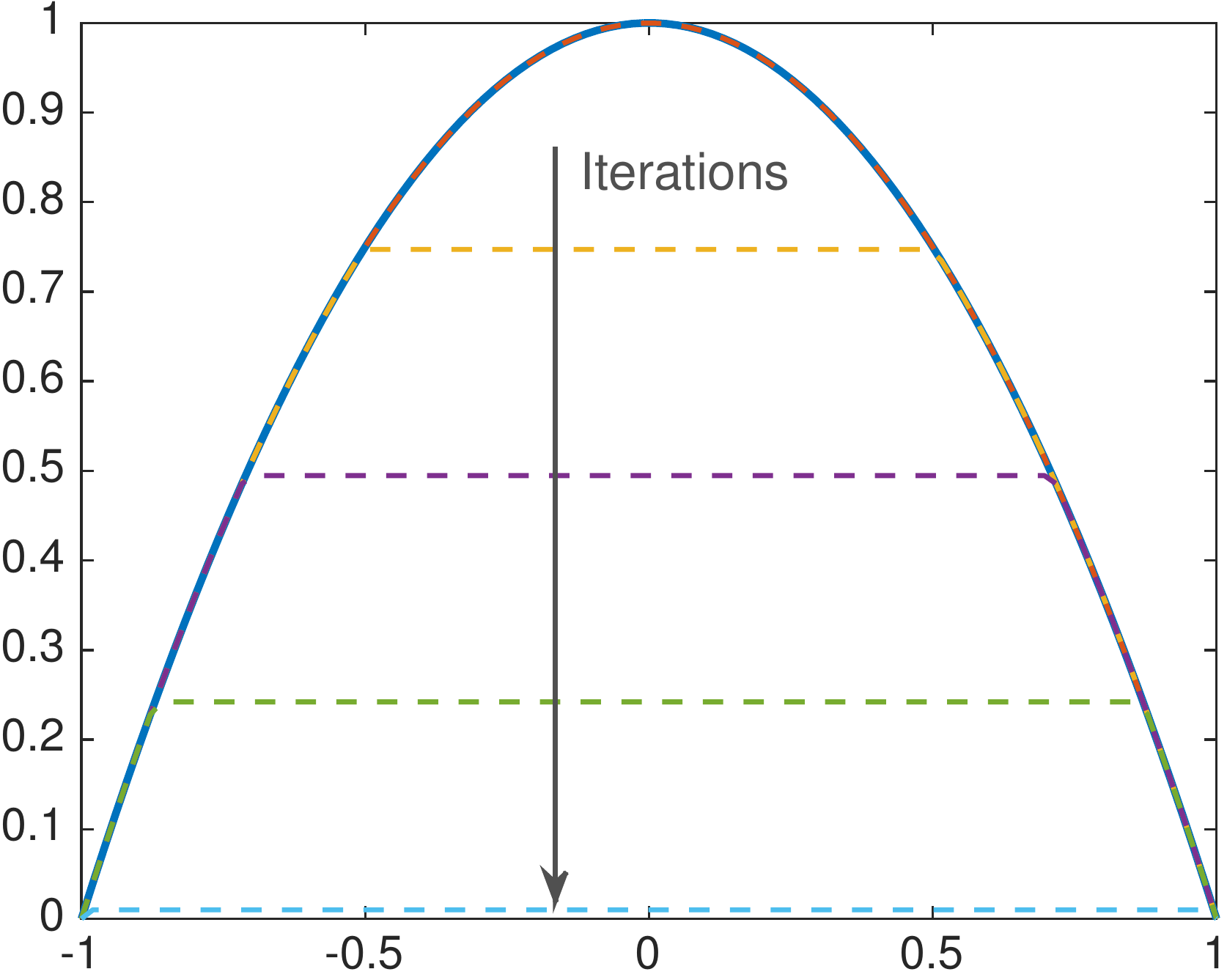}
\caption{Obstacles (solid) and the numerical solutions (dashed). Left: Example \ref{ex_epsconvergence}. Convergence in $\e$. Bottom to top: $\e = 0.2\,,0.1\,,0.05\,,0.001\,,0.0001$. Right: Example \ref{ex_initabove}. Fixed-point iterations when $u^0 \equiv g_2$.}
\label{prelim_example}
\end{figure} 


\subsection{Numerical results in two dimensions}
All examples take the initial function in the iterative scheme to be the minimum value $g_{min} = \min g$ of the obstacle function. 
In all contour plots, the solid line represents the level sets of the original function and the dashed line represents the same level sets of the numerical solution. Unless otherwise stated, the two-dimensional numerical solutions shown are computed on a $64\times64$ grid, for illustration purposes.  Computations were performed on larger sized grids. 

We performed the same numerical experiments in \cite{AbbasiLineSolverQuasiConvex}, and achieved similar results. This is expected, since solving our PDE with small $\e$ is consistent with the QC envelope.  We do not reproduce those results here, in order to save space.

The examples we present focus on the difference between the two operators for values of $e$ close to 1. 

\begin{example}[Strict convexification of level sets]
\label{ex:squarelevelsets}
Let $g(x)$ be the signed distance function (negative on the inside) to the square, $S = \left\{x \mid \max_{i}\abs{x_i}=1/2 \right\}$.

Although $g$ is convex, its level sets are not \emph{strictly} convex. We apply the iterative procedure to $g$ with $\e=0.5$ so that we strictly convexify the level sets. The results demonstrating this are displayed in \autoref{fig:gridaligned}.

\end{example}

\begin{example}[Non-convex signed distance function]
\label{ex:pacman}
In this example, $g(x)$ is the signed distance function  to the curve $\Gamma$, given below:
\begin{align*}
\Gamma(t) &= 
\frac{1}{2}\begin{cases} 
(\cos(t-\frac{\pi}{2}),\sin(t-\frac{\pi}{2})) & t\in[0,3\pi/2]\\
\left(0,\frac{t-7\pi/4}{\pi/4}\right) & t\in[3\pi/2,7\pi/4] \\
\left(\frac{7\pi/4-t}{\pi/4},0\right) & t\in[7\pi/4,2\pi]
\end{cases}
\end{align*}
We compare the results of using different $\e$. In particular, we choose $\e=h/2$ and $\e=1$. In the latter case, we see that the level sets are more curved. Results are displayed in \autoref{fig:gridaligned}.
\end{example}

\begin{example}
\label{ex:circles}
We consider the obstacle 
where $g$ is a cone with circular portions removed from its level sets.
Take 
\[
g(x) = 
\begin{cases}
1 &x\in A\\
\abs{x}-1/2 &x\in \Omega\setminus A
\end{cases}
\]
where $A=\{(x_1\pm1/2)^2+x_2^2\le 1/16\}\bigcup \{x_1^2+(x_2\pm1/2)^2\le1/16\}$. Results showing the $g=0.7$ level set are found in \autoref{fig:gridaligned}.
\end{example}

%
%
%


\begin{figure}[t]
\centering
\includegraphics[width=.3\textwidth]{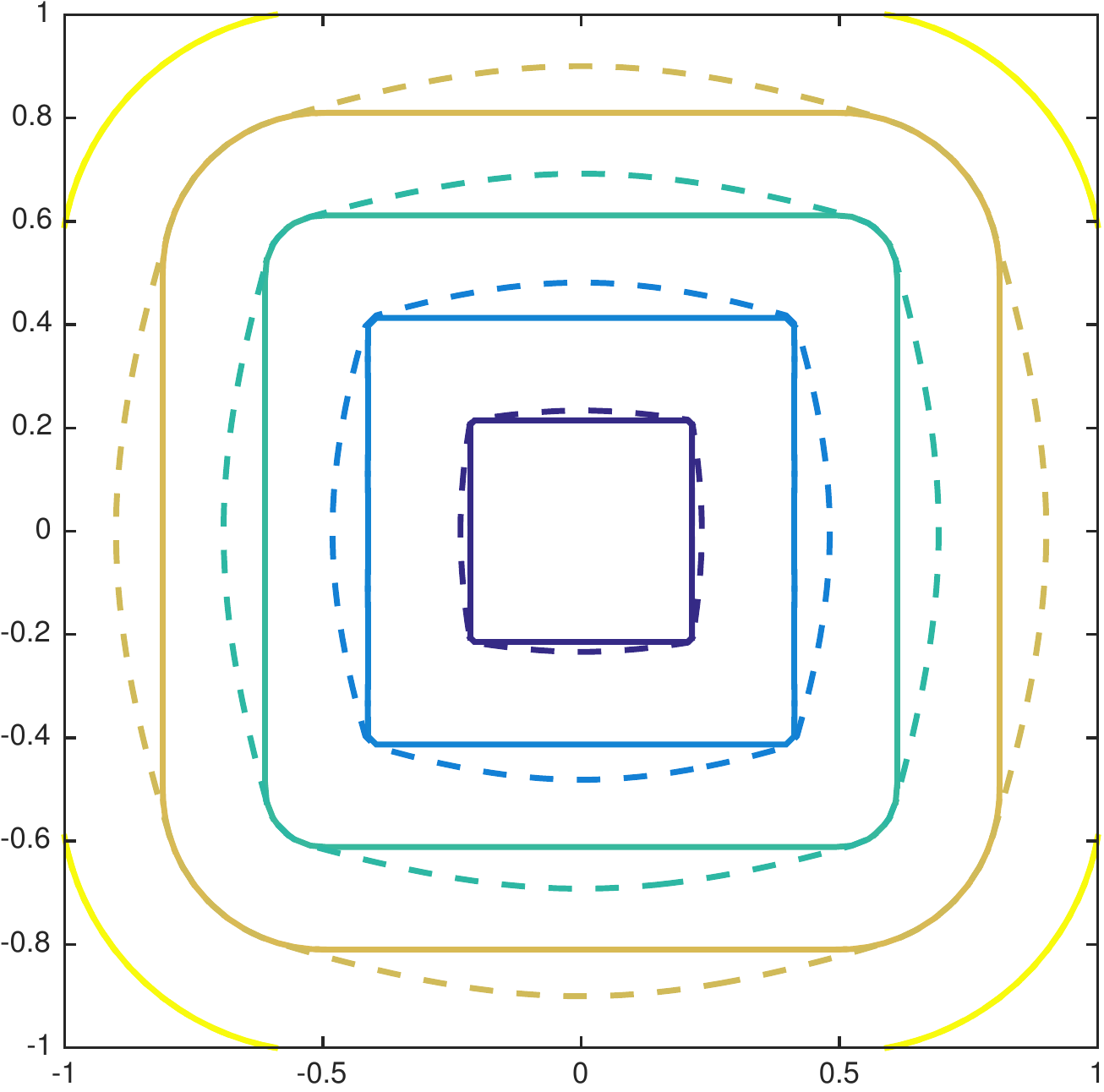}
\includegraphics[width=.3\textwidth]{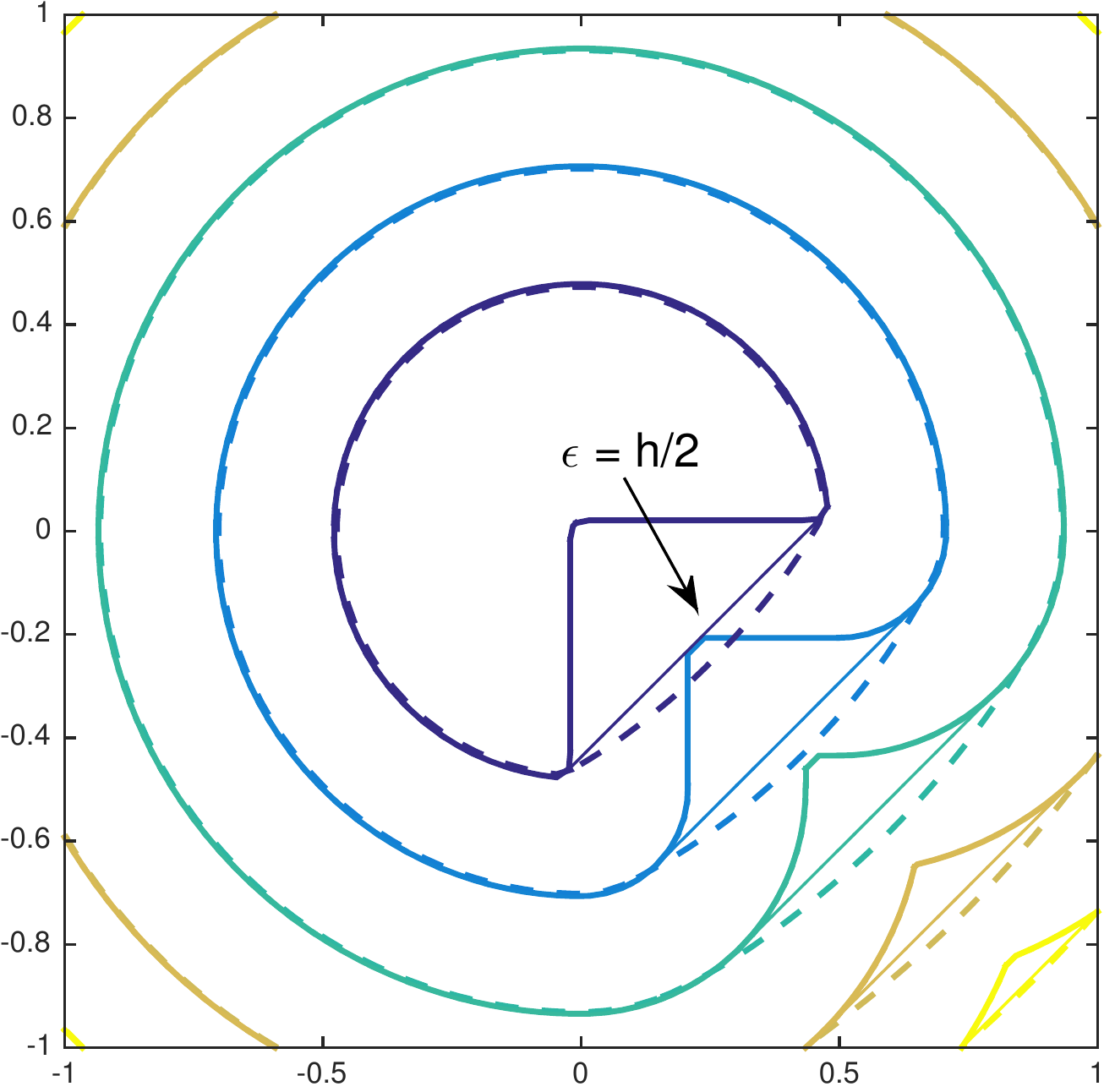}
\includegraphics[width=.345\textwidth]{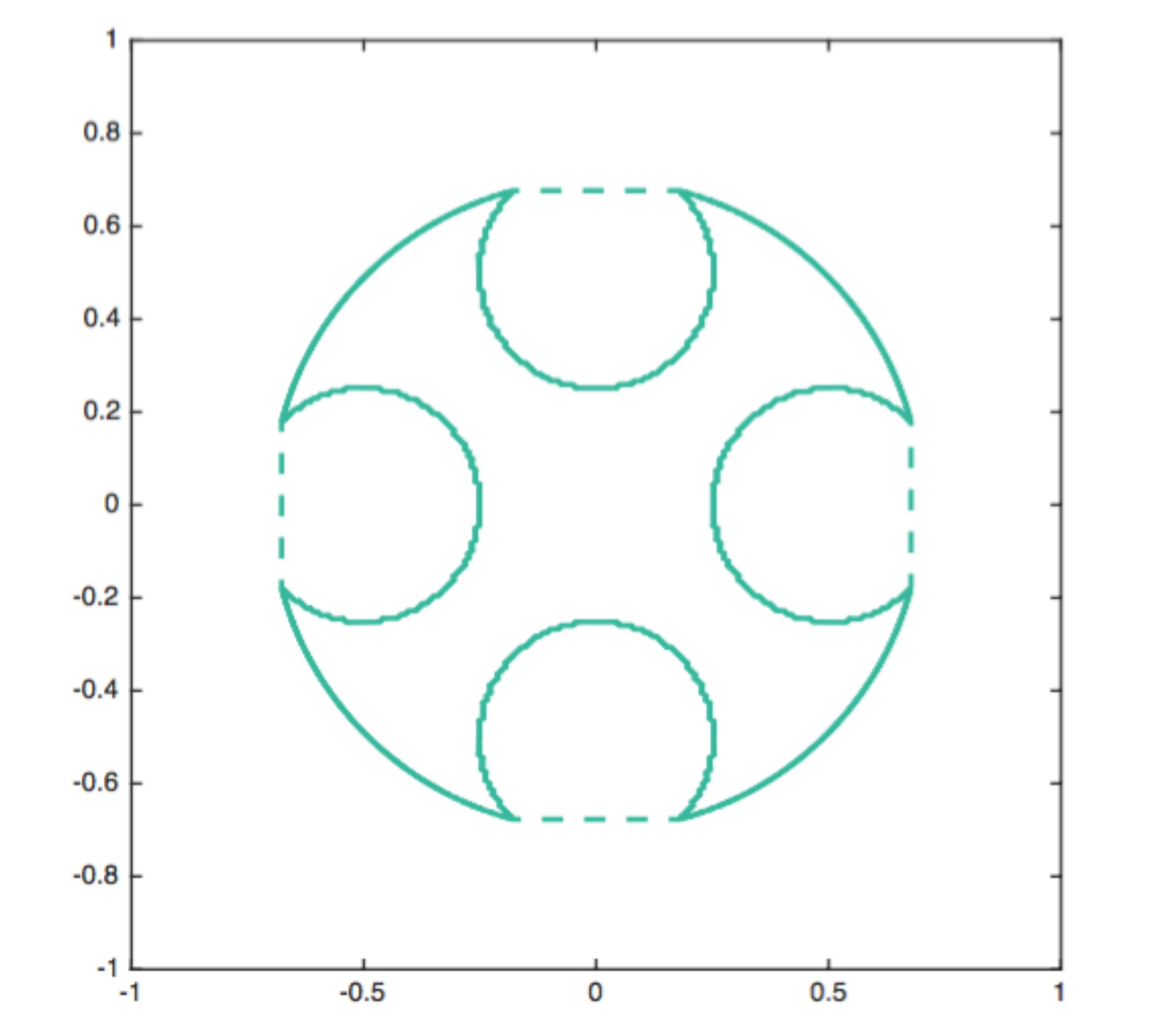}
\caption{Left: Examples \ref{ex:squarelevelsets}, square level sets become uniformly convex with $\e = 1/2$.  Middle: Ex~\ref{ex:pacman}, the solution with $\e=1$ is indicated by the dashed contours.  Right: Ex~\ref{ex:circles} with $\e$ near zero.  }
\label{fig:gridaligned}
\end{figure} 

\begin{example}[Comparison to the $\e$-robust quasiconvex envelope]\label{example_robust}
We compare the solutions of our PDE to the solutions of the line solver presented in \cite{AbbasiLineSolverQuasiConvex} which returns the robust quasiconvex envelope. In particular, we use a stencil width $W=5$, we used the same directions in the line solver, and we set $\e=0.02$ for the strict QCE, and we also used a regularization of  $.02$ for the robust QC (in principle we should have used $.02^2$ but the matching value was better for illustration purposes.

\begin{figure}[t]
\centering
\includegraphics[width=.3\textwidth]{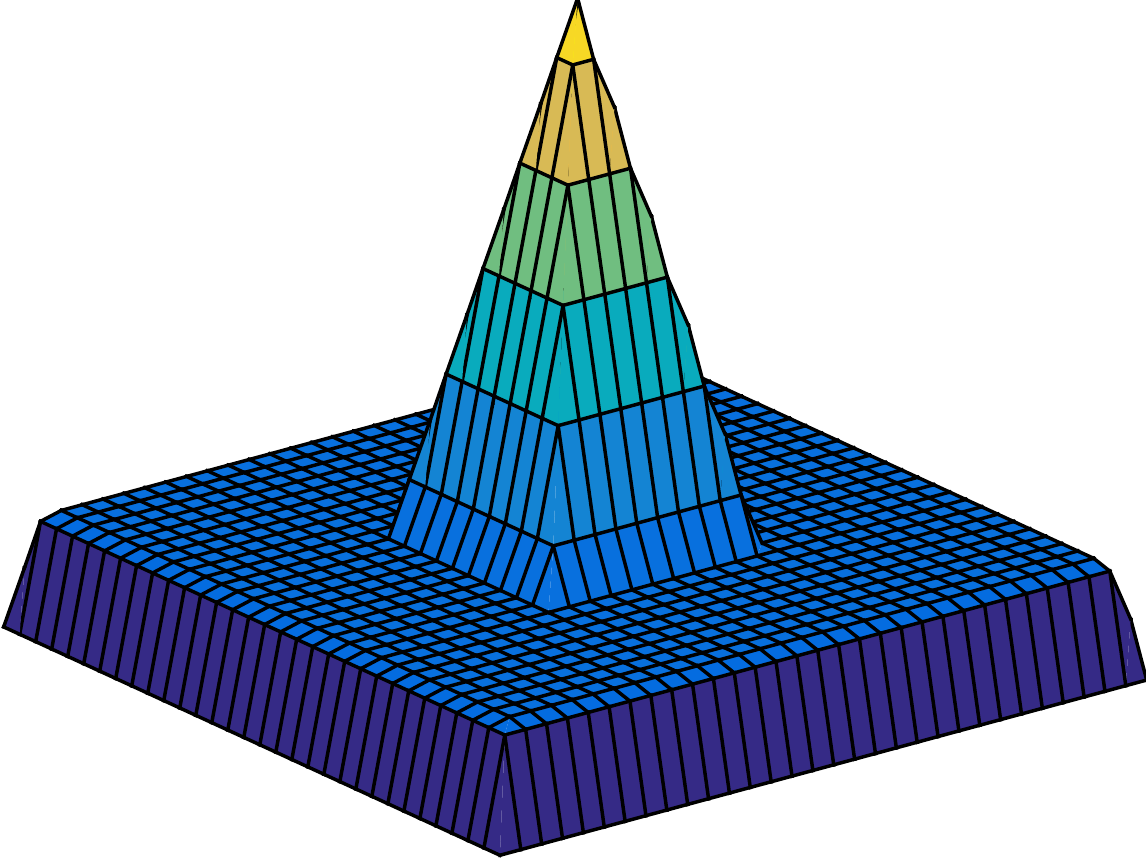}
\includegraphics[width=.3\textwidth]{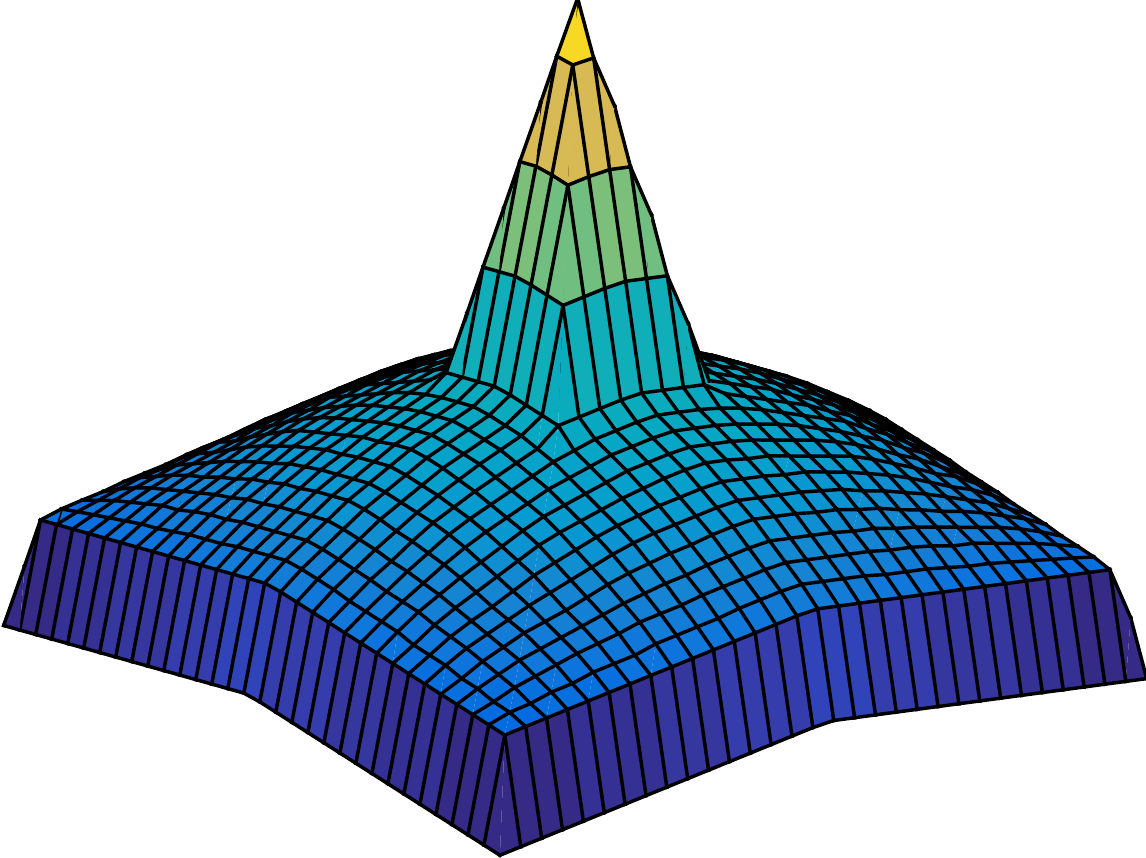}
\includegraphics[width=.3\textwidth]{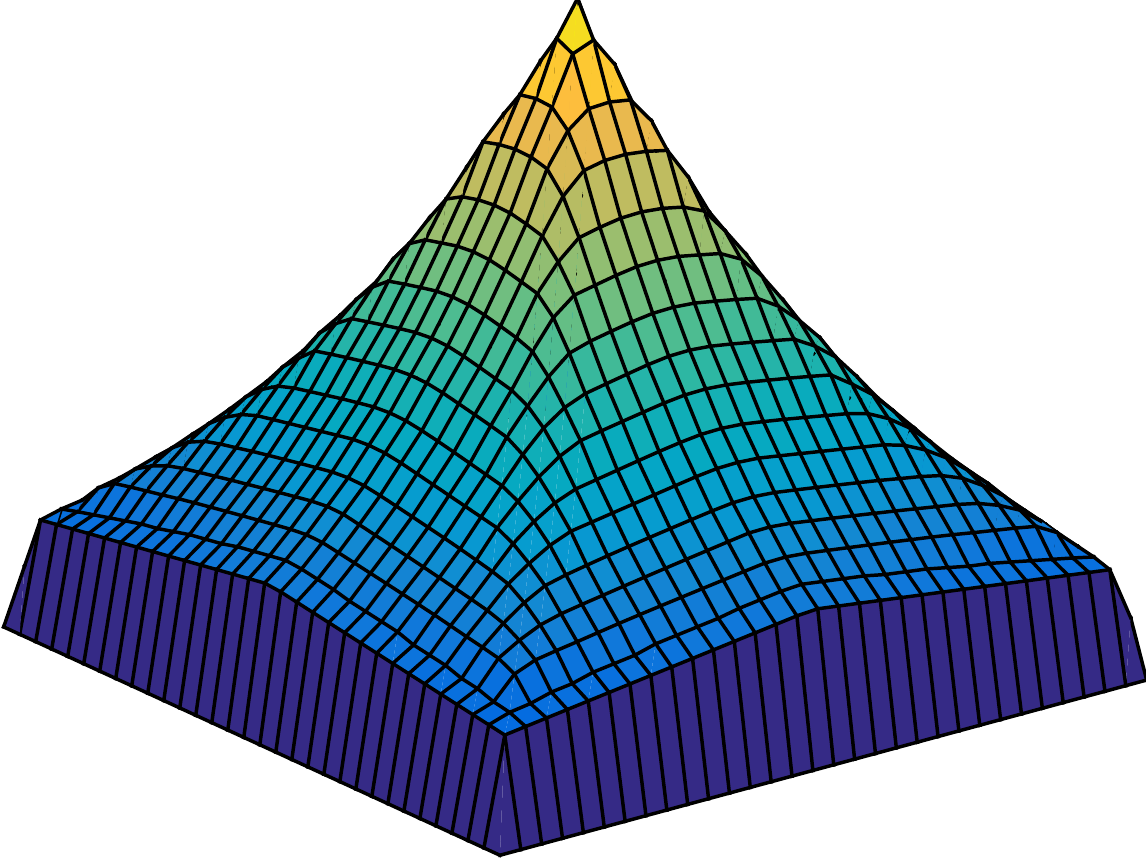}
\\
\includegraphics[width=.3\textwidth]{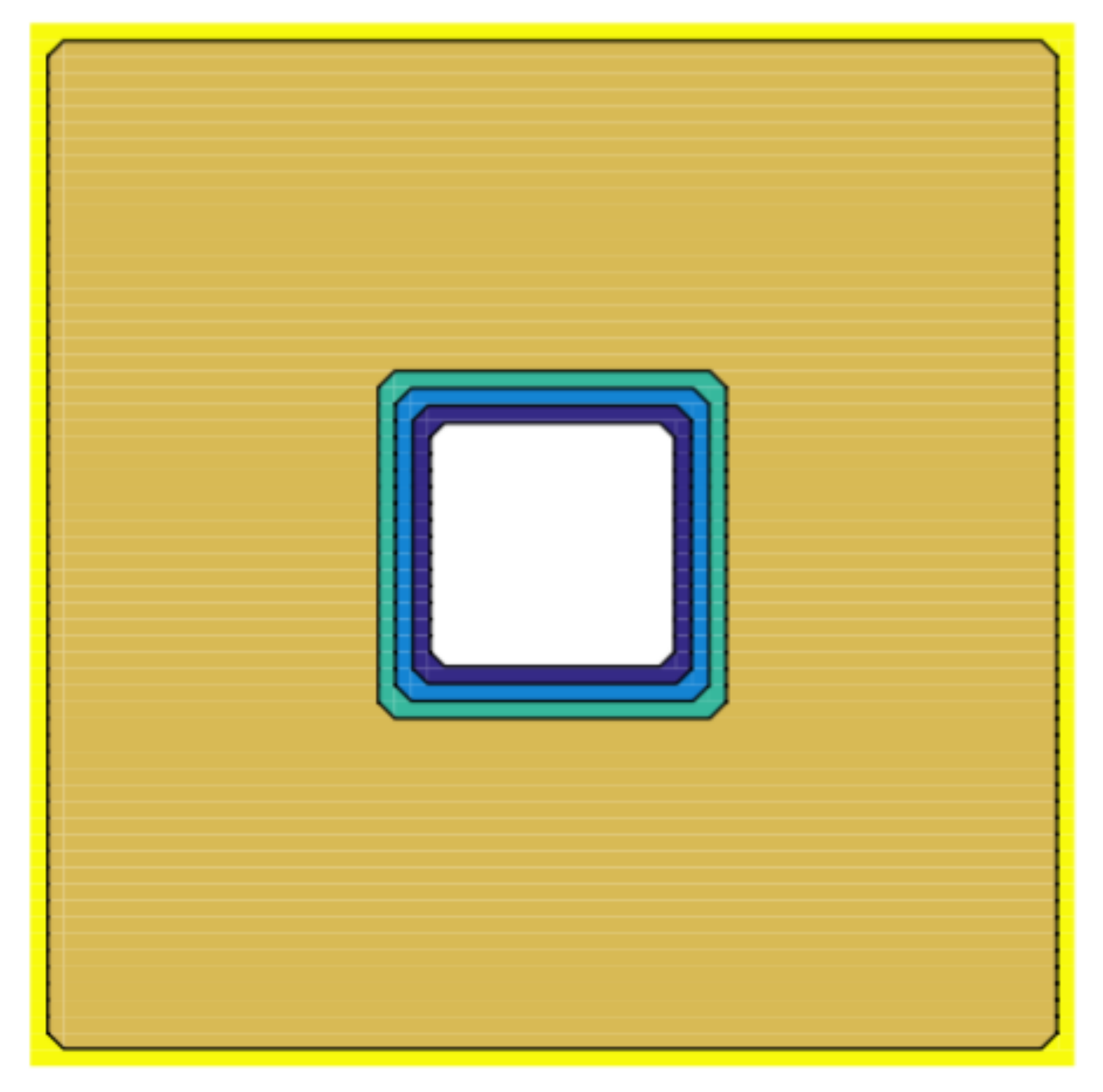}
\includegraphics[width=.3\textwidth]{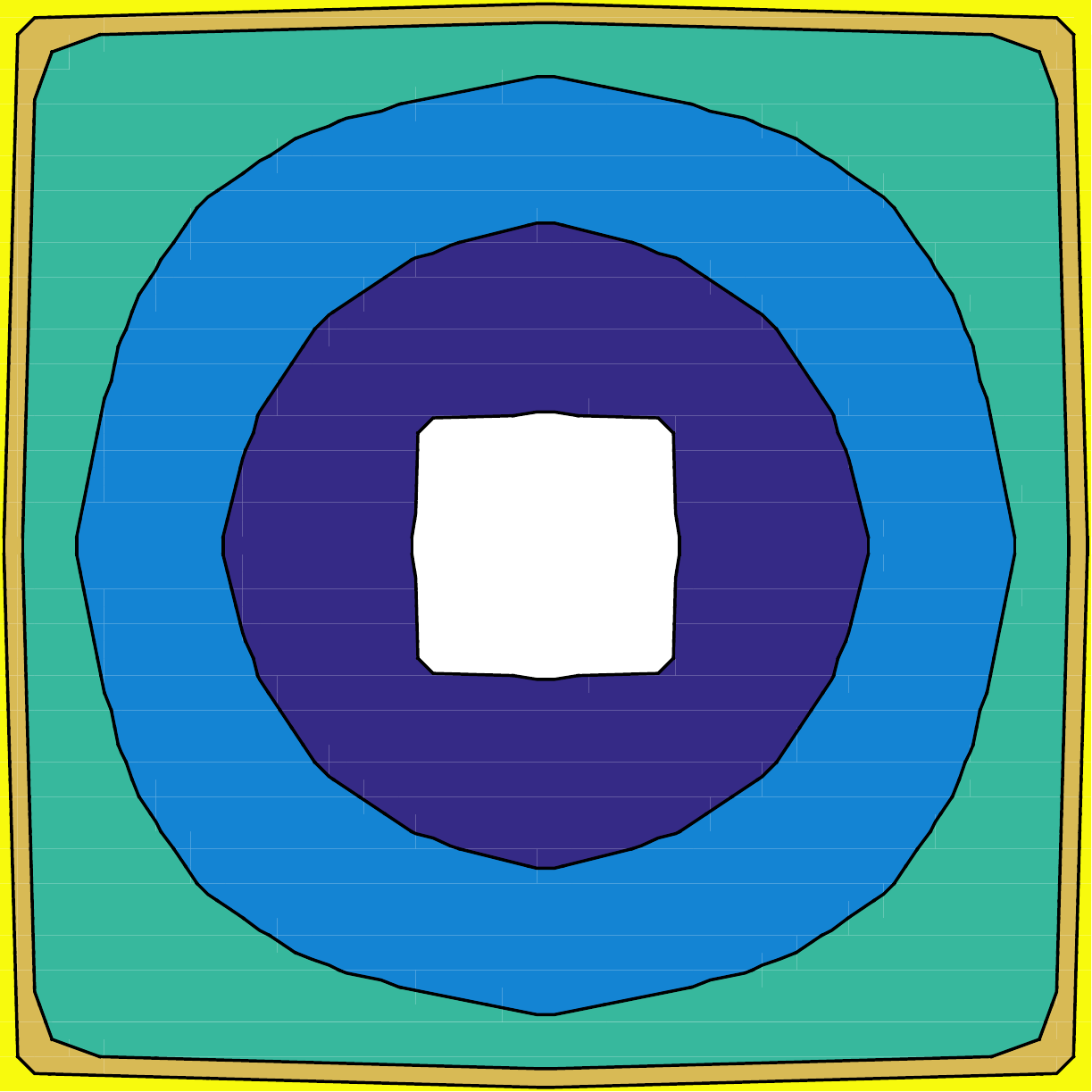}
\includegraphics[width=.3\textwidth]{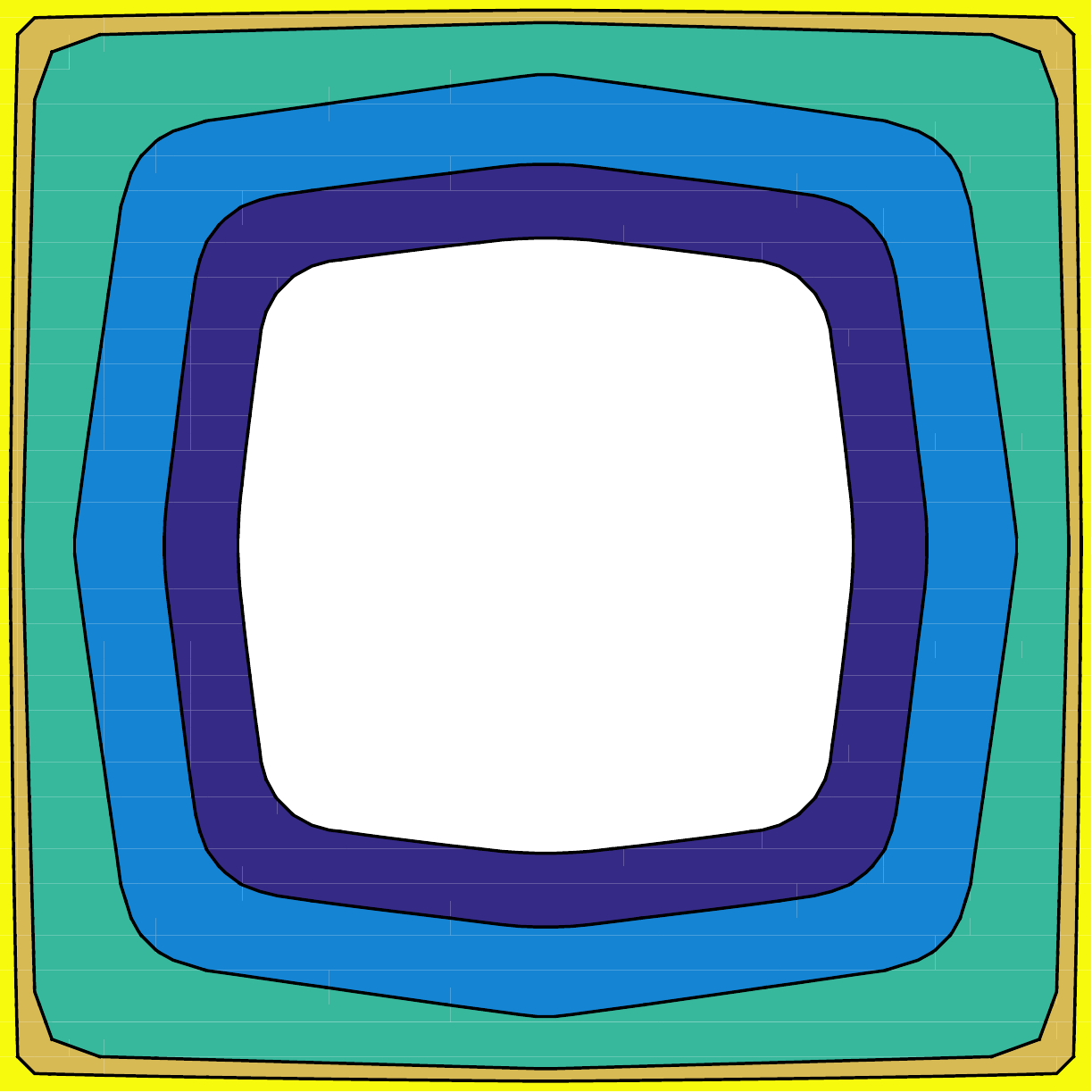}
\caption{Example \ref{example_robust}. Top: Surface plots (inverted) of function, robust QCE,  strictly QCE.. Bottom: Corresponding contour plots.}
\label{fig:robust}
\end{figure}

\end{example}
\subsection{Accelerating iterations using the line solver}
We found an effective method to reduce the computational time to find the solution. We implement the line solver for the robust QCE proposed in \cite{AbbasiLineSolverQuasiConvex},  alternating with the PDE iterations. In particular, on an $n\times n$ grid, after every $2n$ iterations of the PDE we apply one iteration of the line solver (with the commensurate value of $\e$) in each direction. \autoref{table_accelerate} shows the number of iterations required for convergence using the same obstacle from \cite[Example 6.1]{AbbasiLineSolverQuasiConvex}.   Note the line solver is for a different regularization of the QC operator, however for small values of $\e$ the solutions are close.  For larger values of $\e$ the operator still accelerates the solver, but it does not approach the solution to within arbitrary precision.


In table~\ref{table_accelerate} we compare the number of iterations required for each method.  The results were comparable across different examples.  The number of iterations of the PDE operator required for convergence was typically a small constant times $n^2$.
The computational cost of a single line solve was on the order of a small constant (say 10) times a the cost of a single PDE iteration. So the combined method, requires $\bO(n)$ iterations (possibly with a $\log n$ prefactor, which we ignore, since it is not significant at these values of $n$).   So the combined method required (roughly)  $n$ iterations. 

 Recall that there are $N =n^2$ grid points. Each iteration has a cost proportional to the number of directions used and to the number of grid points, so the total cost is $\bO\left((WN^2) \right)$ for the iterative method, and $\bO(WN)$ for the combined method (with constants of roughly 1 and 10, respectively).   So combining the iterative solver with the line solver results in significant improvements to computation time.

\begin{table}[t]
\centering
\begin{tabular}{c | c | c | c | c }
$n$ & 32 &   64 &  128  &  256 \\
\hline
2n PDE Iterations & 50 & 102 & 203 & 398 \\
2n PDE It + 1 Line Solve & 6 & 8 & 10 & 7
\end{tabular}
\caption{Number of iterations required for convergence using different solution methods.}
\label{table_accelerate}
\end{table}

\bibliographystyle{alpha}
\bibliography{QC_PDE_main}

\begin{thebibliography}{ADSZ88}

\bibitem[ADSZ88]{avriel1988generalized}
Mordecai Avriel, Walter~E Diewert, Siegfried Schaible, and Israel Zang.
\newblock {\em Generalized concavity}.
\newblock Springer (reprinted by SIAM classics), 1988.

\bibitem[AO16]{AbbasiLineSolverQuasiConvex}
Bilal Abbasi and Adam~M. Oberman.
\newblock Computing the quasiconvex envelope using a nonlocal line solver.
\newblock Dec 2016.
\newblock https://arxiv.org/abs/1612.05584.

\bibitem[BGJ12a]{barron2012functions}
Emmanuel~N Barron, Rafal Goebel, and Robert~R Jensen.
\newblock Functions which are quasiconvex under linear perturbations.
\newblock {\em SIAM Journal on Optimization}, 22(3):1089--1108, 2012.

\bibitem[BGJ12b]{barron2012quasiconvex}
Emmanuel~N Barron, Rafal Goebel, and Robert~R Jensen.
\newblock The quasiconvex envelope through first-order partial differential
  equations which characterize quasiconvexity of nonsmooth functions.
\newblock {\em Discrete Contin. Dyn. Syst. Ser. B}, 17:1693--1706, 2012.

\bibitem[BGJ13]{barron2013quasiconvex}
E~Barron, Rafal Goebel, and R~Jensen.
\newblock Quasiconvex functions and nonlinear pdes.
\newblock {\em Transactions of the American Mathematical Society},
  365(8):4229--4255, 2013.

\bibitem[BJ13]{barron2013uniqueness}
EN~Barron and RR~Jensen.
\newblock A uniqueness result for the quasiconvex operator and first order pdes
  for convex envelopes.
\newblock In {\em Annales de l'Institut Henri Poincare (C) Non Linear
  Analysis}. Elsevier, 2013.

\bibitem[BM06]{Bardi2006709}
Martino Bardi and Paola Mannucci.
\newblock On the dirichlet problem for non-totally degenerate fully nonlinear
  elliptic equations.
\newblock {\em Communications on Pure and Applied Analysis}, 5(4):709--731,
  2006.

\bibitem[BM13]{bardi2013comparison}
Martino Bardi and Paola Mannucci.
\newblock Comparison principles and dirichlet problem for fully nonlinear
  degenerate equations of {M}onge--{A}mp{\`e}re type.
\newblock In {\em Forum Mathematicum}, volume~25, pages 1291--1330, 2013.

\bibitem[BS91]{BS91}
Guy Barles and Panagiotis~E Souganidis.
\newblock Convergence of approximation schemes for fully nonlinear second order
  equations.
\newblock {\em Asymptotic analysis}, 4(3):271--283, 1991.

\bibitem[BV04]{BoydBook}
Stephen Boyd and Lieven Vandenberghe.
\newblock {\em Convex optimization}.
\newblock Cambridge University Press, Cambridge, 2004.

\bibitem[CG12]{carlier2012}
Guillaume Carlier and Alfred Galichon.
\newblock Exponential convergence for a convexifying equation.
\newblock {\em ESAIM: Control, Optimisation and Calculus of Variations},
  18(03):611--620, 2012.

\bibitem[CIL92a]{crandall1992user}
Michael~G Crandall, Hitoshi Ishii, and Pierre-Louis Lions.
\newblock User's guide to viscosity solutions of second order partial
  differential equations.
\newblock {\em Bulletin of the American Mathematical Society}, 27(1):1--67,
  1992.

\bibitem[CIL92b]{CIL}
Michael~G. Crandall, Hitoshi Ishii, and Pierre-Louis Lions.
\newblock User's guide to viscosity solutions of second order partial
  differential equations.
\newblock {\em Bull. Amer. Math. Soc. (N.S.)}, 27(1):1--67, 1992.

\bibitem[CS82]{caffarelli1982convexity}
Luis~A Caffarelli and Joel Spruck.
\newblock Convexity properties of solutions to some classical variational
  problems.
\newblock {\em Communications in Partial Differential Equations},
  7(11):1337--1379, 1982.

\bibitem[CS03]{colesanti2003quasi}
Andrea Colesanti and Paolo Salani.
\newblock Quasi--concave envelope of a function and convexity of level sets of
  solutions to elliptic equations.
\newblock {\em Mathematische Nachrichten}, 258(1):3--15, 2003.

\bibitem[Fro16]{froeseGauss}
Brittany Froese.
\newblock Convergent approximation of surfaces of prescribed {G}aussian
  curvature with weak {D}irichlet conditions.
\newblock 2016.

\bibitem[Kaw85]{kawohl1985rearrangements}
Bernhard Kawohl.
\newblock Rearrangements and convexity of level sets in pde.
\newblock {\em Lecture notes in mathematics}, (1150):1--134, 1985.

\bibitem[KS07]{kohn2007second}
R~Kohn and Sylvia Serfaty.
\newblock Second-order pde's and deterministic games.
\newblock In {\em Proceedings of ICIAM}, 2007.

\bibitem[Obe04]{ObermanMC}
Adam~M. Oberman.
\newblock A convergent monotone difference scheme for motion of level sets by
  mean curvature.
\newblock {\em Numer. Math.}, 99(2):365--379, 2004.

\bibitem[Obe06]{ObermanSINUM}
Adam~M. Oberman.
\newblock Convergent difference schemes for degenerate elliptic and parabolic
  equations: {H}amilton-{J}acobi equations and free boundary problems.
\newblock {\em SIAM J. Numer. Anal.}, 44(2):879--895 (electronic), 2006.

\bibitem[Obe07]{ObermanConvexEnvelope}
Adam~M. Oberman.
\newblock The convex envelope is the solution of a nonlinear obstacle problem.
\newblock {\em Proc. Amer. Math. Soc.}, 135:1689--1694, 2007.

\bibitem[Obe08a]{ObermanCENumerics}
Adam~M. Oberman.
\newblock Computing the convex envelope using a nonlinear partial differential
  equation.
\newblock {\em Math. Models Methods Appl. Sci.}, 18(5):759--780, 2008.

\bibitem[Obe08b]{ObermanEigenvalues}
Adam~M. Oberman.
\newblock Wide stencil finite difference schemes for the elliptic
  {M}onge-{A}mp\`ere equation and functions of the eigenvalues of the
  {H}essian.
\newblock {\em Discrete Contin. Dyn. Syst. Ser. B}, 10(1):221--238, 2008.

\end{thebibliography}

\end{document}